\let\ORIlabel\label
\let\ORIrefstepcounter\refstepcounter
   \let\label\ORIlabel 
   \let\refstepcounter\ORIrefstepcounter}
\setlist[description]{style=unboxed,leftmargin=.5em}
\setlist[itemize]{style=sameline,leftmargin=2em}
\ifpdf  \DeclareGraphicsExtensions{.eps,.pdf,.png,.jpg}
\crefname{hypothesis}{Hypothesis}{Hypotheses}
\DeclareMathOperator*{\argmin}{arg\,min}
\newcommand\aff{{\sf aff}}
\newcommand{\boundary}{{\sf bdry}}
\newcommand{\conv}{{\sf conv}}
\newcommand{\dist}{{\sf dist}}
\newcommand{\dom}{{\sf dom}}
\newcommand{\gr}{{\sf gph}}
\renewcommand{\ker}{{\sf ker}}
\newcommand{\interior}{{\sf int}}
\newcommand{\lin}{{\sf lin}}
\newcommand{\range}{{\sf rge}}
\newcommand\Span{{\sf span}}
\renewcommand\Re{{\mathds R}}
\newcommand\rS{{\mathbb S}}
\newcommand\KKT{{\rm KKT}}
\newcommand{\ds}{\displaystyle}
\def\[{\begin{equation}}
\def\]{\end{equation}}
\def\cA{{\mathcal A}}
\def\cB{{\mathcal B}}
\def\cC{{\mathcal C}}
\def\cD{{\mathcal D}}
\def\cE{{\mathcal E}}
\def\cF{{\mathcal F}}
\def\cG{{\mathcal G}}
\def\cH{{\mathcal H}}
\def\cJ{{\mathcal J}}
\def\cK{{\mathcal K}}
\def\cL{{\mathcal L}}
\def\cM{{\mathcal M}}
\def\cN{{\mathcal N}}
\def\cQ{{\mathcal Q}}
\def\cR{{\mathcal R}}
\def\cS{{\mathcal S}}
\def\cT{{\mathcal T}}
\def\cU{{\mathcal U}}
\def\cV{{\mathcal V}}
\def\cX{{\mathcal X}}
\def\cY{{\mathcal Y}}
\def\bfa{{\pmb a}}
\def\bfb{{\pmb b}}
\def\bfc{{\pmb c}}
\def\bfd{{\pmb d}}
\def\bfs{{\pmb s}}
\def\bfu{{\pmb u}}
\def\bfv{{\pmb v}}
\def\bfp{{\pmb p}}
\def\bfq{{\pmb q}}
\def\bfr{{\pmb r}}
\def\bfw{{\pmb w}}
\def\bfx{{\pmb x}}
\def\bfy{{\pmb y}}
\def\bfz{{\pmb z}}
\def\bfzero{{\pmb 0}}
\def\bfnu{{\boldsymbol \nu}}
\def\bfmu{{\boldsymbol\mu}}
\def\bfzeta{{\boldsymbol \zeta}}
\def\bflambda{{\boldsymbol\lambda}}
\def\bfeta{{\boldsymbol\eta}}
\def\bftheta{\boldsymbol{\theta}}
\def\bfxi{\boldsymbol{\xi}}
\title{Aubin Property and Strong Regularity Are Equivalent for Nonlinear Second-Order Cone Programming\thanks{January 7, 2025
\funding{This work was funded by
the National Key R \& D Program of China (No. 2021YFA001300),
the National Natural Science Foundation of China (No. 12271150),
the Hunan Provincial Natural Science Foundation of China (No. 2023JJ10001),
the Science and Technology Innovation Program of Hunan Province (No. 2022RC1190),
the RGC Senior Research Fellow Scheme (No. SRFS2223-5S02),
the GRF Projects (Nos. 15307822 and 15307523),
and the Hunan Provincial Innovation Foundation for Postgraduate (No. CX20220432).}
}}
\author{Liang Chen\thanks{School of Mathematics, Hunan University, Changsha, China
  (\email{chl@hnu.edu.cn}).}
\and Ruoning Chen\thanks{Department of Mathematical Sciences, Tsinghua University, Beijing, China;
and Department of Applied Mathematics, The Hong Kong Polytechnic University, Hung Hom, Hong Kong
(\email{crn22@mails.tsinghua.edu.cn}, \email{ruoning.chen@polyu.edu.hk}).}
\and Defeng Sun\thanks{Department of Applied Mathematics, The Hong Kong Polytechnic University, Hung Hom, Hong Kong
  (\email{defeng.sun@polyu.edu.hk}).}
\and Junyuan Zhu\thanks{School of Mathematics, Hunan University, Changsha, China
  (\email{jyz@hnu.edu.cn}).}}
\begin{document}

\maketitle

\begin{abstract}
This paper solves a fundamental open problem in variational analysis on the equivalence between the Aubin property and the strong regularity for nonlinear second-order cone programming (SOCP) at a locally optimal solution.
We achieve this by introducing a reduction approach to the Aubin property characterized by the Mordukhovich criterion and
a lemma of alternative choices on cones to replace the S-lemma used in Outrata and Ram\'irez [SIAM J. Optim. 21 (2011) 789-823] and Opazo, Outrata, and Ram\'irez [SIAM J. Optim. 27 (2017) 2141-2151], where the same SOCP was considered under the strict complementarity condition except for possibly only one block of constraints.
As a byproduct, we also offer a new approach to the well-known result of Dontchev and Rockafellar [SIAM J. Optim. 6 (1996) 1087-1105] on the equivalence of the two concepts in conventional nonlinear programming.
\end{abstract}

\begin{keywords}
Nonlinear SOCP,
Aubin property,
strong regularity,
strong second-order sufficient condition,
constraint nondegeneracy
\end{keywords}

\begin{MSCcodes}
90C, 90C31, 90C46
\end{MSCcodes}

\section{Introduction}
This paper aims to answer a long-time open problem in variational analysis: whether the \emph{Aubin property} of the solution mapping associated with the canonically perturbed Karush-Kuhn-Tucker (KKT) system and the \emph{strong regularity} of the KKT system are equivalent for nonlinear conic programming without assuming convexity at a locally optimal solution.
In the context of nonlinear SOCP \cite{aliz2003}, here we completely settle this question by proving the equivalence of the two concepts in the absence of strict complementarity.
To be precise, we consider the nonlinear SOCP problem of the form
\[
\label{nlsocp}
\min_{\bfx\in\Re^n} f(\bfx)\quad\text{s.t.}
\quad
h(\bfx)=\bfzero,
\quad
g^j(\bfx)\in\cQ_j,\ j=1,\ldots, J,
\]
where $f:\Re^n\to\Re$, $h:\Re^n\to\Re^m$ and
$g^j:\Re^n\to\Re^{1+r_j}$ are twice continuously differentiable functions,
and $\cQ_j\subseteq\Re^{1+r_j}$ is the second-order cone defined by
$\cQ_j := \left\{{\bfy}\in\Re^{1+r_j}\mid  \dot y\geq \|\bar{\bfy}\| \right\}$.
Here and throughout this paper, the components of vectors in $\Re^{1+r}$ are counted from $0$ to $r$.
For a vector $\bfy\in\Re^{1+r}$, we use $\dot y$ to denote the first component of ${\bfy}$ and define
$\bar {\bfy}:=(y_1,\ldots,y_{r})^\top$, the subvector of $\bfy$ with the first component of $\bfy$ being removed.
Moreover, we write $\bfy=(\dot y;\bar\bfy)$ for simplicity and use
$(\dot g^j(\bfx);\bar g^j(\bfx))$ to represent $g^{j}(\bfx)$ for convenience.
The vectors in $\Re^n$ are indexed in the standard way from $1$ to $n$,
for which $\|\cdot\|$ denotes the Euclidean norm.
For the case that $r_j=0$ in \eqref{nlsocp}, we take the convention that $\cQ_j:=[0,+\infty)$,
so that \eqref{nlsocp} turns to the conventional nonlinear programming if $r_j=0$ for all $1\le j\le J$.
For convenience, we also discuss   \eqref{nlsocp} in the more general conic programming form
\[
\label{op}
\min_{\bfx\in\cX} f(\bfx)
\quad\mbox{s.t.}
\quad
G(\bfx)\in\cK,
\]
where $\cX$ and $\cY$ are two finite-dimensional real Hilbert spaces each endowed with an inner product $\langle \cdot,\cdot\rangle$ and its induced norm $\|\cdot\|$,
$\cK\subseteq\cY$ is a closed convex cone which is $C^2$-cone reducible (c.f. \cite[Definition 3.135]{B&S2000}) at every $\bfy\in\cK$,
and the two functions $f:\cX\to\Re$ and $G:\cX\to\cY$ are twice continuously differentiable.
Given two vectors $\bfa\in\cX$ and $\bfb\in\cY$, the canonically perturbed problem of \eqref{op} is given by
\[
\label{pop}
\min_{\bfx\in\cX} f(\bfx)-\langle \bfa,\bfx\rangle
\quad\mbox{s.t.}
\quad
G(\bfx)-\bfb \in\cK.
\]
Define the Lagrangian function of \eqref{op} by
$$
\cL(\bfx,\bfy)=f(\bfx)-\langle \bfy, G(\bfx)\rangle , \quad   \forall \,(\bfx,\bfy)\in\cX\times\cY.
$$
Then, the KKT system of \eqref{pop} is given by
\[
\label{kktpop}
\bfa=\nabla_{\bfx}\cL(\bfx,\bfy)
\quad\mbox{and}\quad
-\bfy\in\cN_{\cK}(G(\bfx)-\bfb),
\]
where $\nabla_\bfx \cL(\bfx,\bfy)$ denotes the adjoint of  $\cJ_x\cL(\bfx,\bfy)$,
the partial Fr\'echet derivative of $\cL$ with respect to $\bfx$,
and $\cN_{\cK}$ is the normal cone mapping over $\cK$ used in convex analysis.
Moreover, $\bfx^*\in\cX$ is called a stationary point to \eqref{op}, if there exists a multiplier $\bfy^*\in\cY$ such that $(\bfx^*,\bfy^*)$ is a solution to the KKT system \eqref{kktpop} (with $\bfa=\bfzero$ and $\bfb=\bfzero$)
of \eqref{op}.
Then, for the (perturbed) KKT system \eqref{kktpop}, one can define the solution mapping
\[
\label{skkt}
{\rS}_{\KKT}(\bfa,\bfb)
:=
\{(\bfx,\bfy)\mid
\bfa=\nabla_{\bfx}\cL(\bfx,\bfy),\
\bfb\in G(\bfx)+  {\cN_{\cK^*}(\bfy)}
\},
\]
were $\cK^*:=\{ \bfy' \in\cY
\mid \langle \bfy',\bfy \rangle\geq 0, \, \forall \,\bfy\in \cK \}$
is the dual cone of $\cK$.
At a locally optimal solution $\bfx^*$ to \eqref{op} with $\bfy^*$ being the associated multiplier,
i.e., $(\bfx^*,\bfy^*)
\in {\rS}_{\KKT}(\bfzero,\bfzero)$,
this paper is concerned with the following two conditions:

\smallskip
\begin{itemize}
\item[\bf (\romannumeral1)]
\it
The solution mapping ${\rS}_{\rm KKT}$ in \eqref{skkt} has the Aubin property at $(\bfzero,\bfzero)$ for $(\bfx^*,\bfy^*)$, i.e.,
there exists a
constant $\kappa>0$ and open neighborhoods $\cU$ of $(\bfzero,\bfzero)$ and $\cV$ of $(\bfx^*,\bfy^*)$ such that
$$
{\rS}_{\rm KKT}(\bfa',\bfb')\cap\cV\subseteq{\rS}_{\rm KKT}(\bfa,\bfb)+\kappa\|(\bfa',\bfb')-(\bfa,\bfb)\|\mathbb{B}_{\cX\times\cY},\quad
\forall \, (\bfa,\bfb), (\bfa',\bfb')\in\cU,
$$
where $\mathbb{B}_{\cX\times\cY}$ denotes the closed unit ball in ${\cX\times\cY}$ centered at the origin.

\smallskip
\item[\bf (\romannumeral2)]
The solution mapping ${\rS}_{\rm KKT}$ in \eqref{skkt}
has a single-valued Lipschitz continuous localization around
$((\bfzero,\bfzero),(\bfx^*,\bfy^*))$, i.e., there exist a constant $\kappa'>0$ and open neighborhoods $\cU'$ of $(\bfzero,\bfzero)$ and $\cV'$ of $(\bfx^*,\bfy^*)$ such that
${\rS}_{\rm KKT}(\bfa,\bfb)\cap\cV'$ is single-valued for $(\bfa,\bfb)\in\cU'$, and
$${\rS}_{\rm KKT}(\bfa',\bfb')\cap\cV'\subseteq{\rS}_{\rm KKT}(\bfa,\bfb)\cap\cV'+\kappa'\|(\bfa',\bfb')-(\bfa,\bfb)\|\mathbb{B}_{\cX\times\cY},\
\forall \, (\bfa,\bfb), (\bfa',\bfb')\in\cU'.
$$
\end{itemize}
Note that $\mbox{({\bf \romannumeral2 })}\Rightarrow\mbox{({\bf \romannumeral1})}$ trivially holds.
Moreover, according to \cite[Corollary 2.2]{Robinson1980}, $({\bf\romannumeral2})$ is equivalent to the condition that
$(\bfx^*,\bfy^*)$ is a strongly regular solution (in the sense of Robinson \cite{Robinson1980}) of the generalized equation
\[
\label{ge}
0\in \begin{pmatrix}
\nabla_{\bfx}\cL(\bfx,\bfy)
\\
G(\bfx)
\end{pmatrix}
+\begin{pmatrix}
\cN_\cX(\bfx)
\\
 {\cN_{\cK^*}(\bfy)}
\end{pmatrix},
\]
i.e., there exist two neighborhoods $\cU$ of the origin $(\bfzero,\bfzero)\in\cX\times\cY$ and $\cV$ of $(\bfx^*,\bfy^*)$, respectively, such that for every $(\bfa,\bfb)\in \cU$, the linearized generalized equation (c.f. \cite[Definition 22]{bonnans2005} or \cite[Definition 13]{outrata2011})
has a unique solution in $\cV$, denoted by $q_{\cV}(\bfa,\bfb)$, and the mapping $q_{\cV}:\cU\to \cV$ is locally Lipschitz continuous.
Since the generalized equation \eqref{ge} is a reformulation of the KKT system \eqref{kktpop} without perturbation, a strongly regular solution to \eqref{ge} is also called a strongly regular solution to the KKT system of \eqref{op}.

Conditions $({\bf\romannumeral1})$ and $({\bf\romannumeral2})$
belong to central topics in variational analysis, and both of them imply the constraint nondegeneracy, which holds at a feasible solution $\bfx\in\cX$ to   \eqref{op} (or $\bfx\in\cX$ is nondegenerate, for simplicity) if
\[
\label{cnd}
\cJ G(\bfx)\cX+\lin\cT_{\cK}(G(\bfx))=\cY,
\]
where
$\lin \cT_{\cK}(G(\bfx))$ denotes the linearity space, i.e. the largest linear space contained in this tangent cone (in the sense of convex analysis).
The original definition of constrain nondegeneracy takes the form of \cite[Definition 2.1]{Bonans1998}, and
\eqref{cnd} is an equivalent reformulation given in \cite[Definition 16]{bonnans2005} since $\cK$ is $C^2$-reducible to a closed convex cone \cite[Lemma 15]{bonnans2005}.
In particular, it is well-known \cite[Theorem 30]{bonnans2005} that $({\bf\romannumeral2})$ is equivalent to the strong second-order sufficient condition of SOCP   \eqref{nlsocp} at $\bfx^*$  (c.f. \eqref{ssosc}) and the constraint nondegeneracy.
Moreover, these two conditions have played a key role in proving the fast linear or superlinear local convergence rates of augmented Lagrangian methods \cite{chenzhuzhao,liuzhang2008,solodov,nlpCri,sunzhang}.
Additionally, the strong regularity condition also implies the full stability of locally optimal solutions for  \eqref{op}. One may refer to \cite{fullstabsocp,fullstab} for more information.

The most representative case of   \eqref{op} is the conventional nonlinear programming, for which the equivalence of the conditions ({\bf \romannumeral1 }) and ({\bf \romannumeral2 }) has been properly addressed in the seminal work of Dontchev and Rockafellar \cite{don1996}.
Since the proof in \cite{don1996} is highly related to the properties of polyhedral convex sets,
deriving the equivalence of the conditions ({\bf \romannumeral1}) and  ({\bf \romannumeral2}) with a non-polyhedral cone $\cK$ needs new ideas.
An initial important step was made in Outrata and Ram\'irez \cite{outrata2011} (and the erratum by Opazo, Outrata, and Ram\'irez \cite{opazo2017})
in the setting of nonlinear SOCP \eqref{nlsocp},
in which the KKT system \eqref{kktpop} without perturbation was formulated to the generalized equation
$\bfzero\in \nabla f(\bfx)+ \nabla G(\bfx) \cN_\cK (G(\bfx))$ and associated with solution mapping
\[
\label{sge}
{\rS}_{\rm GE}({\bfeta}): =\{\bfx\mid {\bfeta} \in \nabla f(\bfx)+ \nabla G(\bfx) \cN_\cK (G(\bfx))\}.
\]
Under the assumption that the \emph{strict complementarity} condition holds except for possibly only one block of the constraints,
the equivalence of ({\bf \romannumeral1}) and ({\bf \romannumeral2})
was proved in \cite{outrata2011} and \cite{opazo2017}.
As was concluded in \cite[Section 5]{outrata2011},
the authors were uncertain whether such a restriction should be attributed to their proof technique based on the S-lemma,
or the intrinsic property of the second-order cones. 
Note that, however, for convex programming, the equivalence between the two concepts is known from \cite[Proposition 5.1]{dontchev1994}.
In a more general problem stetting, including problem \eqref{op} with $\cK$ being an arbitrary closed convex set,
a recent work \cite[Theorem 4.2]{benko24} shows that the variational sufficient condition for local optimality \cite{rock2023} at $\bfx^*$ is a sufficient condition for the equivalence between ({\bf \romannumeral1}) and ({\bf \romannumeral2}). 
The restrictiveness of this variational sufficient condition will be discussed in Remark \ref{rmkvs}.
Moreover, if additionally $\cK$ is a $C^2$-cone reducible set, 
another recent work \cite[Theorem 5.14]{hang2024} utilized an assumption on relative interiors of subdifferential mappings to prove the equivalence between ({\bf \romannumeral1}) and ({\bf \romannumeral2}), 
and such a condition becomes the strict complementarity condition if \eqref{nlsocp} is the problem in consideration.

In this paper, we aim to prove the equivalence between the two conditions ({\bf i}) and ({\bf ii}) for SOCP \eqref{nlsocp} without assuming either the convexity or the strict complementarity.
To achieve this, we introduce a reduction approach and propose a lemma of alternative choices on cones to supersede the S-lemma used in the existing attempts.
Moreover, as a byproduct, we offer a new approach to the well-known result of Dontchev and Rockafellar \cite{don1996} on the equivalence of  ({\bf \romannumeral1}) and ({\bf \romannumeral2}) for conventional nonlinear programming.

The remaining parts of this paper are organized as follows.
In Section \ref{sec:pre},
we provide some basic definitions and preliminary results in variational analysis.
In Section \ref{sec:corderivative}, we discuss 
coderivatives associated with second-order cones.
In Section \ref{sec:main}, we establish the equivalence between the two conditions ({\bf i}) and ({\bf ii}) by using a reduction approach, in which the lemma of alternative choices on cones developed in Section \ref{sec:pre} plays a key role.
In section \ref{sec:conclu}, we conclude this paper with some discussions.

\section{Notation and preliminaries}
\label{sec:pre}
Let $\cE$ and $\cF$ be two finite-dimensional real Hilbert spaces each equipped with an inner product $\langle \cdot, \cdot \rangle$ and its induced norm $\|\cdot\|$,
and $\Omega$ be a nonempty subset of $\cE$.
We use $\interior\,\Omega$ and $\boundary\,\Omega$ to denote the interior and boundary of $\Omega$, respectively.
For a subspace $\cS\subseteq\cE$, we use $\cS^\perp$ to denote its orthogonal complement.
For a given vector $\bfu \in \cE$, we use $\dist (\bfu,\Omega):=\inf\{\|\bfu'-\bfu\| \mid \bfu'\in\Omega\}$ to define the distance from $\bfu$ to $\Omega$
and use $\Pi_\Omega(\bfu):=\argmin\{\|\bfu'-\bfu\| \mid \bfu'\in\Omega\}$ to denote the projection mapping.
 {For a nonempty cone $K\subseteq\cE$,
$K^\circ := \{ \bfd' \in \cE \mid \langle \bfd',\bfd \rangle\leq 0, \, \forall \,\bfd\in K \}$ is the polar cone of $K$,
and $K^*:=-K^\circ$ is the dual cone of $K$.}
For a linear operator $\cM:\cE\to\cF$,
we use $\range \cM$ and $\ker \cM$ to denote its range space and null space, respectively.
It holds that $\range \cM=(\ker \cM^*)^\perp$, where $\cM^*$ denotes the adjoint of $\cM$.
If $\cE=\cF$, {and $\cM$ is a self-adjoint operator,} we write $\cM\succ O$ to say that $\cM$ is positive definite.
Given a set-valued mapping $ \Psi : \cE \rightrightarrows \cF $,
we use $\dom\Psi$ and $\gr\Psi$ to denote its {domain} and {graph}, respectively.
We use $I_{s}$ to represent the $s\times s$ identity matrix and use
$O_{s\times l}$ (or $O_{s}$ if $s=l$) to represent the $s\times l$ zero matrices,
and we often omit $s$ and $l$ if no ambiguity is introduced.
For convenience, we use $\Span(\cdot )$ to represent the subspace spanned by all the columns of the matrices in the brace.

We present two technical lemmas, which are crucial for our reduction procedure to be introduced later.
The first one is a lemma of alternative choices on cones.

\begin{lemma}
\label{lemtech}
Let $\cM:\cE\to\cE$ be a self-adjoint linear operator and
$Q \subseteq\cE$ be a closed convex cone with $\interior Q \neq\emptyset$.
Assume that  $\langle \bfu, \cM \bfu\rangle > 0$ for all $\bfu\in \boundary  Q\setminus\{\bfzero\}$.
Then exactly one of the following two assertions is true:
\begin{description}
\item[\rm \bf (1)]
$\langle \bfu, \cM\bfu\rangle >0$, $\forall \, \bfu\in\interior Q$.

\item[\rm \bf (2)]
$\exists\,\bfv\in-\interior Q$ and $\lambda\ge 0$ such that $\cM \bfv = -\lambda \bfv \in Q$.
\end{description}
\end{lemma}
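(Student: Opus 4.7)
The plan is to verify that (1) and (2) are mutually exclusive and then show that whenever (1) fails, a compactness-plus-Lagrange-multiplier argument on the unit sphere produces the eigenpair required by (2).

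For exclusivity, I would assume both hold and derive a contradiction: if $\bfv \in -\interior Q$ satisfies $\cM \bfv = -\lambda \bfv$ with $\lambda \geq 0$, then $\bfw := -\bfv \in \interior Q$ and $\langle \bfw, \cM \bfw\rangle = \langle \bfv, \cM \bfv\rangle = -\lambda\|\bfv\|^2 \leq 0$, contradicting the strict positivity required by (1).

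For the main direction, I would introduce the quadratic form $\phi(\bfu) := \langle \bfu, \cM\bfu\rangle$ and the compact set $S := \{\bfu \in Q \mid \|\bfu\| = 1\}$, which is nonempty because $\interior Q \neq \emptyset$. Let $\bfw^* \in S$ attain $m := \min_{\bfu \in S}\phi(\bfu)$. If $m > 0$, the positive two-homogeneity of $\phi$ gives $\phi(\bfu) = \|\bfu\|^2 \phi(\bfu/\|\bfu\|) \geq m\|\bfu\|^2 > 0$ for every $\bfu \in Q\setminus\{\bfzero\}$, and (1) holds. Otherwise $m \leq 0$, and the boundary hypothesis excludes $\bfw^* \in S \cap \boundary Q$ (each such vector gives $\phi > 0 \geq m$), forcing $\bfw^* \in \interior Q$. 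Since $\bfw^*$ then minimizes $\phi$ subject only to the sphere constraint $\|\bfu\|^2 = 1$, the first-order optimality condition together with the self-adjointness of $\cM$ yields $\cM \bfw^* = \mu \bfw^*$ for some $\mu \in \Re$; pairing this with $\bfw^*$ shows $\mu = \phi(\bfw^*) = m \leq 0$. Setting $\bfv := -\bfw^*$ and $\lambda := -\mu$ then delivers $\bfv \in -\interior Q$, $\lambda \geq 0$, and $\cM \bfv = -\lambda \bfv = \lambda \bfw^* \in Q$, establishing (2).

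The only delicate point is the localization of the minimizer into $\interior Q$, which is exactly the role of the strict-positivity hypothesis on $\boundary Q \setminus \{\bfzero\}$; once it rules out a boundary minimizer, the cone constraint becomes inactive at $\bfw^*$ and the problem collapses to a standard spherical eigenvalue extraction. I therefore do not anticipate any serious technical obstacle, but the argument genuinely needs the full strength of the boundary assumption, since without it the minimizer could sit on $\boundary Q$ and no eigenvector in $\interior Q$ need exist.
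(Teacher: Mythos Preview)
Your proposal is correct and follows essentially the same route as the paper: minimize the quadratic form over $Q\cap\{\|\bfu\|=1\}$, use the boundary hypothesis to force the minimizer into $\interior Q$, and then read off the eigenpair. The only cosmetic difference is that the paper invokes Robinson's constraint qualification and the full KKT system (with a multiplier $\bfy\in Q^\circ$ that is then shown to vanish because $\bfu^*\in\interior Q$), whereas you argue directly that the cone constraint is inactive at an interior minimizer and reduce to the unconstrained spherical problem; the resulting eigenvector and sign conclusions are identical.
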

\begin{proof}
It is easy to see that Assertion (2) is not true if Assertion (1) holds.
On the other hand, suppose that Assertion (1) does not hold, i.e., $\exists \, \bfu\in\interior Q$
such that $\langle \bfu, \cM\bfu\rangle\le 0$.
Consider the following optimization problem
\[
\label{auxu}
\min\{
\langle \bfu, \cM\bfu\rangle
\mid
\bfu\in\cQ,
\|\bfu\|^2-1= 0\}.
\]
Since the feasible set of \eqref{auxu} is compact and $\langle \bfu, \cM\bfu\rangle >0$, $\forall \, \bfu\in\boundary Q\setminus\{\bfzero\}$, there exists a $\bfu^*\in\interior Q$ that globally solves \eqref{auxu} with $\langle \bfu^*,\cM\bfu^*\rangle\le 0$.
It is also easy to check that Robinson's constraint qualification \cite[Equation (2.182)]{B&S2000} holds at $\bfu^*$ for \eqref{auxu}.
Consequently, there exists a multiplier $(\bfy,\lambda)\in\cE\times\Re$, together with $\bfu^*$, satisfying
$$
2\cM\bfu^*+\bfy+2\lambda\bfu^*=\bfzero,
\quad
\bfu^*\in Q,\,\bfy\in Q^{\circ},\,\langle \bfy,\bfu^*\rangle=0,
\quad
\|\bfu^*\|^2=1.
$$
Since $\bfu^*\in\interior Q$ and $\bfy\in Q^{\circ}$, one must have $\bfy=\bfzero$, so that $\cM\bfu^*=-\lambda\bfu^*$.
Moreover, one has
$\lambda=-\langle \bfu^*,\cM \bfu^*\rangle \ge 0$.
Then, by taking
$\bfv:=-\bfu^*\in -\interior Q$
one has
$\cM\bfv=-\lambda\bfv\in\cQ$.
Thus, Assertion (2) is true.
\end{proof}

The second lemma is a characterization of positive definite matrices that originates from \cite[Theorem 3.6]{han1984}, and one may refer to \cite[Proposition 3.1]{Hiriart} for a more straightforward proof based on the Moreau decomposition.

\begin{lemma}
\label{lemma:pd}
Let $M$ be a nonsingular $l\times l$ real self-adjoint matrix and
$K \subseteq \Re^{l}$ be a closed convex cone.
Then $M\succ O$ if and only if
$$
\begin{cases}
\langle \bfd, M \bfd\rangle>0, & \forall\, \bfd\in K\setminus\{\bfzero\},
\\
\langle \bfd, M^{-1}\bfd\rangle >0, & \forall\, \bfd \in K^\circ\setminus\{\bfzero\}.
\end{cases}
$$
\end{lemma}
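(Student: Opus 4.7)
The forward direction is immediate: $M\succ O$ yields $M^{-1}\succ O$, so both quadratic forms are strictly positive on all of $\Re^l\setminus\{\bfzero\}$, in particular on $K\setminus\{\bfzero\}$ and $K^\circ\setminus\{\bfzero\}$.

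For the converse, the plan is to argue by contradiction. Assume $M$ is nonsingular, self-adjoint, satisfies both displayed conditions, yet fails to be positive definite. The spectral theorem then produces a unit eigenvector $\bfv$ with strictly negative eigenvalue $\lambda<0$. Two degenerate subcases fall at once: $\bfv\in K$ gives $\langle\bfv,M\bfv\rangle=\lambda<0$, contradicting the first hypothesis, and $\bfv\in K^\circ$ gives $\langle\bfv,M^{-1}\bfv\rangle=1/\lambda<0$, contradicting the second. So one may assume $\bfv$ lies in neither cone.

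The substantive step then invokes the Moreau decomposition associated with the polar pair $(K,K^\circ)$: write $\bfv=\bfu+\bfw$ with $\bfu=\Pi_K(\bfv)\in K\setminus\{\bfzero\}$, $\bfw=\Pi_{K^\circ}(\bfv)\in K^\circ\setminus\{\bfzero\}$, and $\langle\bfu,\bfw\rangle=0$. Testing the eigenvector identity $M(\bfu+\bfw)=\lambda(\bfu+\bfw)$ against both $\bfu$ and $\bfw$ and exploiting self-adjointness yields a coupled pair of scalar identities linking $\langle\bfu,M\bfu\rangle$, $\langle\bfw,M\bfw\rangle$, and $\langle\bfu,M\bfw\rangle$ to $\lambda,\|\bfu\|^2,\|\bfw\|^2$; a parallel computation on $M^{-1}(\bfu+\bfw)=\lambda^{-1}(\bfu+\bfw)$ produces analogous identities for the $M^{-1}$-quadratic forms. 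Feeding in the strict positivity $\langle\bfu,M\bfu\rangle>0$ (from the first hypothesis, since $\bfu\in K\setminus\{\bfzero\}$) and $\langle\bfw,M^{-1}\bfw\rangle>0$ (from the second, since $\bfw\in K^\circ\setminus\{\bfzero\}$), and combining with the constraints imposed by the eigenvector relations and the Moreau orthogonality, I would extract two mutually inconsistent bounds, closing the argument. This is in spirit the Moreau-decomposition proof pointed to in \cite{Hiriart}.

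The hard part is exactly this last coupling. In the generic case where $\bfv$ splits nontrivially across $K$ and $K^\circ$, the two hypotheses bind to \emph{different} pieces of the decomposition ($\bfu$ versus $\bfw$), and a direct clash is not apparent; it is only the orthogonality $\langle\bfu,\bfw\rangle=0$ furnished by Moreau, married to the self-adjointness of $M$ and the eigenvalue identity, that lets the two separate inequalities be fused into an inconsistency. I would expect the bookkeeping to be short once the identities above are written down, but the choice of which linear combination of the identities exposes the conflict is the delicate point.
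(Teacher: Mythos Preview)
The paper does not supply its own proof of this lemma; it simply cites \cite{han1984} for the original result and \cite{Hiriart} for a Moreau-decomposition argument. So your proposal has to stand on its own.

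The forward direction is fine. The reverse direction has a genuine gap: the scalar identities obtained by pairing $M\bfv=\lambda\bfv$ and $M^{-1}\bfv=\lambda^{-1}\bfv$ with $\bfu$ and $\bfw$, together with $\langle\bfu,\bfw\rangle=0$ and the two inputs $\langle\bfu,M\bfu\rangle>0$ and $\langle\bfw,M^{-1}\bfw\rangle>0$, are \emph{not} by themselves inconsistent, so no ``linear combination of the identities'' can expose a contradiction. Concretely, in $\Re^3$ take $\bfu=(1,0,0)$, $\bfw=(0,1,0)$, $\bfv=\bfu+\bfw$, and
$$
M=\begin{pmatrix}1&-2&1\\-2&1&-1\\1&-1&4/5\end{pmatrix}.
$$
Then $M\bfv=-\bfv$, $\langle\bfu,M\bfu\rangle=1>0$, and $\langle\bfw,M^{-1}\bfw\rangle=1/2>0$: every relation and inequality in your scheme is satisfied simultaneously. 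With $K$ the ray through $\bfu$ (so that $K^\circ=\{\bfd:d_1\le 0\}\ni\bfw$ and $\bfu,\bfw$ really are the Moreau components of $\bfv$), the lemma's second hypothesis does fail for this $M$, but only at $-\bfv\in K^\circ$, a direction your argument never probes. Evaluating the two hypotheses solely at the single pair $(\bfu,\bfw)$ manufactured from one eigenvector discards too much of their content. The Moreau-based proof in \cite{Hiriart} does not isolate an eigenvector; it deploys the decomposition so that the two hypotheses can act on a generic direction rather than on two fixed vectors.
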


In the following, we present some definitions and preliminary results in variational analysis.
The regular (Fr\'{e}chet) normal cone to $\Omega$ at $\bfu\in \Omega$ is defined by
$$\widehat{\cN}_{\Omega}(\bfu) :
= \{ \bfp \in\cE\mid \langle \bfp, \bfu'-\bfu\rangle \leq o(\|\bfu'-\bfu\|), \ \forall \,\bfu'\in \Omega\}.$$
According to \cite[Theorem 1.6]{mord2013}, the limiting (Mordukhovich) normal cone to $\Omega$ at $\bfu$ can be defined by
$$\cN_{\Omega}(\bfu)
:=\mathop{\limsup}\limits_{\bfu'\rightarrow\bfu} \widehat{\cN}_{\Omega}(\bfu'),$$
where $``\limsup"$ denotes the Painlev\'e-Kuratowski outer limit.
When the set $\Omega$ is convex, $\widehat\cN_{\Omega}(\bfu)$ coincides with $\cN_\Omega(\bfu)$, and both of them are called the normal cone
(in convex analysis \cite{rock1970}) to $\Omega$ at $\bfu$.
Based on the definition of limiting normal cone, the {limiting coderivative} of $\Psi$ at $\bfu\in\dom\Psi$ for
$\bfv\in \Psi(\bfu)$ was defined by
\[
\label{def:cod}
\cD^*\Psi(\bfu,\bfv)(\bfq):=\{\bfp\in\cE \mid (\bfp,-\bfq)\in\cN_{\gr \Psi}(\bfu,\bfv)\},\quad\forall\,\bfq\in\cF.
\]
If $\Psi$ is single-valued, we use $\cD^*\Psi(\bfu)$ to represent the limiting coderivative for simplicity.
Define $\Re_+:=[0,+\infty)$ and $\Re_-:=(-\infty, 0]$.
From \eqref{def:cod}, one has for any $v<0$ and any $u>0$ that
\[
\label{coderzerofirst}
\cD^*\cN_{\Re_+}(0,v)(q)
=\begin{cases}
\Re, &\mbox{if } q=0,
\\
\emptyset, &\mbox{otherwise},
\end{cases}
\ \, \mbox{and}\ \
\cD^*\cN_{\Re_+}(u,0)(q)=\{0\},\quad\forall q\in\Re.
\]
Also, for any $v\in\Re$ one has
\[
\label{coderzero}
\cD^*\cN_{\Re_+}(0,0)(q)
=\begin{cases}
\Re_-, & \mbox{ if } q<0 ,
\\
\Re, & \mbox{ if } q=0,
\\
0, & \mbox{ if } q>0,
\end{cases}
\quad \mbox{and}\quad
D^*\cN_{\{ 0\}}(0,v)(q)
=
\begin{cases}
\Re, &\mbox{if } q=0,
\\
\emptyset, &\mbox{otherwise}.
\end{cases}
\]
According to \cite[Lemma 19]{outrata2011},
we have the following result on coderivatives.
\begin{lemma}
\label{codchange}
Let $\Omega \subseteq \cE$ be a closed convex set with $\bfu\in\Omega$ and $\bfv\in \cN_{\Omega}(\bfu)$. Then, one has $\bfp\in\cD^*\cN_{\Omega}(\bfu,\bfv)(\bfq)$ if and only if $
-\bfq\in\cD^*\Pi_{\Omega}(\bfu+\bfv)(-\bfq-\bfp)$.
\end{lemma}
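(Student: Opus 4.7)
The plan is to exploit the projection characterization of the normal cone: since $\Omega$ is closed and convex, for any $\bfu\in\Omega$ and $\bfv\in\cE$ one has $\bfv\in\cN_\Omega(\bfu)\Leftrightarrow \bfu=\Pi_\Omega(\bfu+\bfv)$. This shows that the linear map $T:\cE\times\cE\to\cE\times\cE$ defined by $T(\bfu',\bfv'):=(\bfu'+\bfv',\bfu')$ is an invertible isomorphism sending $\gr\cN_\Omega$ bijectively onto $\gr\Pi_\Omega$, with the point $(\bfu,\bfv)\in\gr\cN_\Omega$ corresponding to $(\bfu+\bfv,\bfu)\in\gr\Pi_\Omega$. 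A short inner-product computation yields the adjoint $T^{*}(\bfa,\bfb)=(\bfa+\bfb,\bfa)$.

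Next I would invoke the standard transformation rule for limiting normal cones under a linear bijection, which follows directly from the corresponding rule for regular (Fr\'echet) normals and the definition of $\cN$ as a Painlev\'e--Kuratowski outer limit: $(\bfa,\bfb)\in\cN_{\gr\Pi_\Omega}(\bfu+\bfv,\bfu)$ if and only if $T^{*}(\bfa,\bfb)\in\cN_{\gr\cN_\Omega}(\bfu,\bfv)$. This is the one genuinely analytic step; the rest of the argument is pure bookkeeping driven by the explicit formula for $T^{*}$.

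By the definition of the coderivative, $\bfp\in\cD^{*}\cN_\Omega(\bfu,\bfv)(\bfq)$ means $(\bfp,-\bfq)\in\cN_{\gr\cN_\Omega}(\bfu,\bfv)$, which by the preceding equivalence is the same as the existence of some $(\bfa,\bfb)\in\cN_{\gr\Pi_\Omega}(\bfu+\bfv,\bfu)$ with $(\bfa+\bfb,\bfa)=(\bfp,-\bfq)$. Solving this $2\times 2$ block system uniquely yields $\bfa=-\bfq$ and $\bfb=\bfp+\bfq$, so the condition reduces to $(-\bfq,\bfq+\bfp)\in\cN_{\gr\Pi_\Omega}(\bfu+\bfv,\bfu)$. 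Since $\Pi_\Omega$ is single-valued with $\Pi_\Omega(\bfu+\bfv)=\bfu$ by the projection characterization, the coderivative definition for single-valued maps identifies this exactly with $-\bfq\in\cD^{*}\Pi_\Omega(\bfu+\bfv)(-\bfq-\bfp)$, completing the equivalence. The main obstacle is simply to state and cite the normal-cone transformation identity under the linear bijection $T$ carefully; beyond that, the argument is entirely algebraic.
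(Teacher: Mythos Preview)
Your argument is correct. The paper does not actually prove this lemma; it merely cites \cite[Lemma~19]{outrata2011} for the result. Your self-contained derivation via the linear bijection $T(\bfu',\bfv')=(\bfu'+\bfv',\bfu')$ between $\gr\cN_\Omega$ and $\gr\Pi_\Omega$, together with the standard transformation rule $\cN_{T(S)}(T(x))=(T^{*})^{-1}\cN_{S}(x)$ for limiting normals under a linear isomorphism, is the natural way to establish it and is carried out cleanly.
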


Thanks to Mordukhovich \cite{mord1994}, an
extremely convenient tool for analyzing the Aubin property \cite{aubin1984} via the limiting coderivative is available.
Such a condition was named the Mordukhovich criterion in \cite{varbook}.
It implies the following result.

\begin{lemma}
\label{lemma:aubinorigion}
Let $\bfx^*$ be a stationary point to   \eqref{op}.
Then the mapping ${\rS}_{\rm GE}$ defined by \eqref{sge} has the Aubin property at $(\bfzero,\bfx^*)$
if and only if $\cD^* {\rS}_{\rm GE}(\bfzero, \bfx^*)(\bfzero)=\{\bfzero\}$.
\end{lemma}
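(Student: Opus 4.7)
The plan is to derive this lemma as a direct consequence of the Mordukhovich criterion alluded to in the sentence preceding the statement. Recall that the Mordukhovich criterion asserts that, for a set-valued mapping $\Psi:\cE\rightrightarrows\cF$ between finite-dimensional spaces whose graph is locally closed around a point $(\bar\bfu,\bar\bfv)\in\gr\Psi$, the Aubin property of $\Psi$ at $\bar\bfu$ for $\bar\bfv$ is equivalent to the coderivative condition $\cD^*\Psi(\bar\bfu,\bar\bfv)(\bfzero)=\{\bfzero\}$. Since both $\cX$ and $\cY$ are finite dimensional, no partial sequential normal compactness hypothesis is needed and the criterion applies verbatim.

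The first step is to set $\Psi:={\rS}_{\rm GE}$, $\bar\bfu:=\bfzero$, and $\bar\bfv:=\bfx^*$, and to observe that $(\bfzero,\bfx^*)\in\gr\,{\rS}_{\rm GE}$ because $\bfx^*$ is a stationary point of \eqref{op}, i.e., $\bfzero\in\nabla f(\bfx^*)+\nabla G(\bfx^*)\cN_\cK(G(\bfx^*))$.

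The second step is to verify that $\gr\,{\rS}_{\rm GE}$ is closed around $(\bfzero,\bfx^*)$. This follows by writing $\gr\,{\rS}_{\rm GE}=\{(\bfeta,\bfx)\mid \bfeta-\nabla f(\bfx)\in\nabla G(\bfx)\cN_\cK(G(\bfx))\}$ and noting that $f$ and $G$ are $C^2$, that the cone $\cK$ is closed convex, and that the normal cone mapping $\cN_\cK$ has closed graph. A routine sequential argument, using the continuity of $\nabla f$, $G$, and $\nabla G$ together with the outer semicontinuity of $\cN_\cK$ and the closedness of the linear image of a closed cone under the continuous family of operators $\nabla G(\cdot)$ (restricted to any bounded neighborhood where $\cN_\cK(G(\bfx))$ remains locally bounded along convergent subsequences of multipliers), yields closedness of the graph near the reference point.

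The third step is to apply the Mordukhovich criterion to conclude the stated equivalence. I do not anticipate a genuine obstacle here; the only mildly delicate point is the graph-closedness verification, since unbounded sequences of multipliers in $\cN_\cK(G(\bfx))$ could in principle cause issues, but this is handled by restricting attention to a neighborhood of $(\bfzero,\bfx^*)$ in which boundedness of such multiplier sequences is implicit in the definition of ${\rS}_{\rm GE}$ values staying near $\bfx^*$. Once closedness of $\gr\,{\rS}_{\rm GE}$ near $(\bfzero,\bfx^*)$ is in hand, the equivalence is immediate from the criterion.
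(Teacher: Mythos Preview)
Your approach---direct application of the Mordukhovich criterion---is exactly what the paper does; indeed, the paper offers no proof beyond the sentence ``It implies the following result'' preceding the lemma. One caveat: your claim that boundedness of the multiplier sequences ``is implicit in the definition of $\rS_{\rm GE}$ values staying near $\bfx^*$'' is not correct on its own (nearness of $\bfx$ to $\bfx^*$ does not by itself bound the associated $\bfy\in\cN_\cK(G(\bfx))$; a constraint qualification is needed), but the paper does not address graph closedness either, and in every subsequent use of the lemma constraint nondegeneracy is assumed, which does ensure local boundedness of multipliers and hence local closedness of $\gr\,\rS_{\rm GE}$.
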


Since $\cK$ is $\cC^2$-cone reducible at every $\bfy\in\cK$,
one can use the second-order chain rule developed in \cite[Theorem 7]{outrata2011}, which is a generalization of \cite[Theorem 3.4]{chain}, to get the following result based on repeating the proof of \cite[Theorem 20]{outrata2011}.

\begin{lemma}
\label{lemma:aubin}
Let $\bfx^*$ be a stationary point to   \eqref{op}.
Suppose that the constraint nondegeneracy \eqref{cnd} holds at $\bfx^*$, and
$\bfy^*$ is the corresponding multiplier. Then for the mapping ${\rS}_{\rm GE}$ defined by \eqref{sge} one has
$$
\cD^* {\rS}_{\rm GE}(\bfzero, \bfx^*)(\bfzero)=
\{-\bfd\mid
\bfzero\in \nabla_{\bfx\bfx}^2\cL(\bfx^*,\bfy^*)\bfd
+\nabla G(\bfx^*)\cD^*\cN_\cK(G(\bfx^*),-\bfy^*)(\cJ G(\bfx^*)\bfd)
\}.
$$
\end{lemma}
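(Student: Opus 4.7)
The plan is to follow the blueprint of \cite[Theorem 20]{outrata2011}, keeping only what works in the general $C^2$-cone reducible setting and relying on the second-order chain rule \cite[Theorem 7]{outrata2011}. First I would rewrite the graph of $\rS_{\rm GE}$ in a form amenable to coderivative calculus: a pair $(\bfeta,\bfx)$ lies in $\gr\rS_{\rm GE}$ if and only if there exists $\bfy\in\cY$ with $\bfeta=\nabla_{\bfx}\cL(\bfx,\bfy)$ and $(G(\bfx),-\bfy)\in\gr\cN_\cK$. At the reference point $(\bfzero,\bfx^*)$ one may take $\bfy=\bfy^*$, and the constraint nondegeneracy \eqref{cnd} forces this multiplier to be unique and locally Lipschitz as a function of $(\bfeta,\bfx)$.

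Second, I would represent $\gr\rS_{\rm GE}$ locally around $(\bfzero,\bfx^*)$ as the image of the set $\{(\bfx,\bfy):(G(\bfx),-\bfy)\in\gr\cN_\cK\}$ under the smooth map $(\bfx,\bfy)\mapsto(\nabla_{\bfx}\cL(\bfx,\bfy),\bfx)$, which at $(\bfx^*,\bfy^*)$ has injective Jacobian thanks to \eqref{cnd}. Then I would invoke the second-order chain rule of \cite[Theorem 7]{outrata2011}, whose qualification hypothesis is ensured precisely by \eqref{cnd} combined with the $C^2$-cone reducibility of $\cK$, to express $\cN_{\gr\rS_{\rm GE}}(\bfzero,\bfx^*)$ in terms of $\cN_{\gr\cN_\cK}(G(\bfx^*),-\bfy^*)$ pulled back through the Jacobians of the smooth pieces.

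Third, I would translate the resulting formula into the stated one. By definition \eqref{def:cod}, $\bfp\in\cD^*\rS_{\rm GE}(\bfzero,\bfx^*)(\bfzero)$ amounts to $(\bfp,\bfzero)\in\cN_{\gr\rS_{\rm GE}}(\bfzero,\bfx^*)$. Setting $\bfp=-\bfd$ and unpacking the chain rule, the $\bfx$-differentiation of $\nabla_{\bfx}\cL$ contributes $\nabla_{\bfx\bfx}^2\cL(\bfx^*,\bfy^*)\bfd$; the $\bfy$-differentiation of $\nabla_{\bfx}\cL$, after eliminating the auxiliary multiplier variable, contributes $\nabla G(\bfx^*)$ applied to some element $\bfw$; the component coming from $G$ forces $\bfw\in\cD^*\cN_\cK(G(\bfx^*),-\bfy^*)(\cJ G(\bfx^*)\bfd)$; and the requirement that the $\bfx$-component of the coderivative equation vanish yields the inclusion stated in the lemma, with the overall minus sign absorbed into the convention $\{-\bfd:\ldots\}$.

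The main obstacle will be verifying the qualification hypothesis of the second-order chain rule under \eqref{cnd}. Although this essentially mirrors the SOCP argument sketched in \cite{outrata2011}, care is needed to show that the surjectivity $\cJ G(\bfx^*)\cX+\lin\cT_\cK(G(\bfx^*))=\cY$ together with the $C^2$-cone reduction of $\cK$ indeed delivers the Mordukhovich qualification condition required by \cite[Theorem 7]{outrata2011}; in particular, one must rule out nonzero elements in $\cD^*\cN_\cK(G(\bfx^*),-\bfy^*)(\bfzero)\cap\ker\nabla G(\bfx^*)$. Once this is in place, the remainder of the argument reduces to the bookkeeping already carried out in \cite[Theorem 20]{outrata2011}.
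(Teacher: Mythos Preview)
Your proposal follows exactly the route the paper itself indicates: the paper does not give a standalone proof but simply says the result follows by repeating the proof of \cite[Theorem 20]{outrata2011} with the second-order chain rule \cite[Theorem 7]{outrata2011}, the qualification for which is supplied by constraint nondegeneracy together with $C^2$-cone reducibility of $\cK$. That is precisely your plan, and the key qualification you single out at the end---ruling out nonzero elements of $\cD^*\cN_\cK(G(\bfx^*),-\bfy^*)(\bfzero)\cap\ker\nabla G(\bfx^*)$---is the right one.

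One small correction to your second paragraph: the Jacobian of $(\bfx,\bfy)\mapsto(\nabla_\bfx\cL(\bfx,\bfy),\bfx)$ is injective if and only if $\nabla G(\bfx^*)$ is injective, and \eqref{cnd} does \emph{not} guarantee this (it only gives $\cJ G(\bfx^*)\cX+\lin\cT_\cK(G(\bfx^*))=\cY$). Fortunately this representation is not how \cite[Theorem 20]{outrata2011} actually proceeds; there the coderivative of $\rS_{\rm GE}^{-1}(\bfx)=\nabla f(\bfx)+\nabla G(\bfx)\cN_\cK(G(\bfx))$ is computed directly via sum and chain rules, so you can drop the injectivity claim and go straight to the calculus you describe in your third and fourth paragraphs.
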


\section{Coderivatives of the second-order cone}
\label{sec:corderivative}
Let $r\ge 1$ be an integer and
$\cQ:=\{
\bfz=(\dot z; \bar \bfz)
\in\Re^{1+r} \mid
\dot z\ge\|\bar \bfz\|\}$
be a second-order cone.
From \cite[Lemma 15]{aliz2003} we know that for any $\bfz,\bfv\in\cQ$ one has $\langle \bfz, \bfv \rangle=0$ if and only if
either $\bfz=\bfzero$ or $\bfv=\bfzero$, or there exists a scalar  $\alpha>0$ such that $\bfz=\alpha (\dot v;- \bar \bfv)$.
For a given vector $\bfz\in\Re^{1+r}$,
its spectral factorization associated with $\cQ$ is given by $\bfz=\sigma_1(\bfz)\bfc^1(\bfz)+\sigma_2(\bfz)\bfc^2(\bfz)$, where
$$
\bfc^i(\bfz):=
\begin{cases}
\frac{1}{2}\left(
1;(-1)^i\frac{\bar \bfz}{\|\bar \bfz\|}
\right),
&\mbox{ if }\bar \bfz\neq \bfzero,
\\
\frac{1}{2}\left(
1;(-1)^i\bfw
\right),
&\mbox{ if }\bar \bfz=\bfzero,
\end{cases}
\quad
\mbox{and}
\quad
\sigma_i(\bfz)=\dot z+(-1)^i\|\bar \bfz\|,
\quad
i=1,2
$$
with $\bfw\in\mathbb{R}^{r}$ being an arbitrarily vector such that $\|\bfw\|=1$.
Then, according to \cite[Proposition 3.3]{Tseng2001}, the explicit formula of $\Pi_{\cQ}$ can be given as
$$
\Pi_{\cQ}(\bfz)=\max\{0,\sigma_1(\bfz)\}\bfc^1(\bfz)
+\max\{0,\sigma_2(\bfz)\}\bfc^2(\bfz),\quad\forall\,\bfz\in\Re^{1+r}.
$$
It follows from \cite{Zar1971} that $\Pi_{\cQ}$ is almost everywhere differentiable in $ \Re^{1+r} $, and (continuously) differentiable at all $\bfz\in\Re^{1+r}$ such that ${\dot z}^{2} - \|\bar{\bfz}\|^{2} \ne 0$.
The Bouligand subdifferential of $\Pi_{\cQ}$ at $\bfz\in\Re^{1+r}$ is defined by
$$
\partial_{B} \Pi_{\cQ}(\bfz) := \left\{ \lim_{k\rightarrow \infty} \cJ \Pi_{\cQ}(\bfz_k) \, \middle| \, \bfz_{k} \rightarrow \bfz, \Pi_{\cQ} \mbox{ is differentiable at } \bfz_k  \right\}.
$$
Moreover, the explicit formula of $\partial_{B} \Pi_{\cQ}$ has been computed in
\cite[Lemma 14]{pangss},
\cite[Lemma 4]{chenjs}, and \cite[Lemmas 2.5 \& 2.6]{kanzow09}.
In particular, one has
\[
\label{partialbzero}
\partial_{B}\Pi_{\cQ}(\bfzero)=\{I_{1+r}, O_{1+r}\}
\cup
\left\{\frac{1}{2}
\begin{pmatrix} 1 & \bfw^\top \\ \bfw & 2\alpha I_r + (1-2\alpha)\bfw\bfw^\top  \end{pmatrix}
\,\middle| \,
\begin{array}{ll}
\bfw \in\Re^{r},
\|\bfw\|=1,
\\ \alpha \in [0,1]
\end{array} \right\}.
\]
For convenience, define for any $\bfz\in\Re^{1+r}$ (with $\bar{\bfz}\neq \bfzero$) the following two matrices:
\[
\label{defab}
A(\bfz) :=
I_{1+r}-\frac{1}{2}
\begin{pmatrix}
-1
\\
\frac{\bar{\bfz}}{\|\bar{\bfz}\|}
\end{pmatrix}
\begin{pmatrix}
-1  &  \frac{\bar{\bfz}^\top}{\|\bar{\bfz}\|}
\end{pmatrix}
\quad\mbox{and}\quad
B(\bfz) := \frac{1}{2}
\begin{pmatrix}
1
\\
\frac{\bar{\bfz}}{\|\bar{\bfz}\|}
\end{pmatrix}
\begin{pmatrix}
1  &  \frac{\bar{\bfz}^\top}{\|\bar{\bfz}\|}
\end{pmatrix}.
\]
In \cite{O&S2008}, the limiting coderivative $\cD^{\ast} \Pi_{\cQ}$ was explicitly calculated as in the following lemma, in which ``$\conv$'' denotes the convex hull of a set.

\begin{lemma}
\label{projcod}
Let $r\ge 1$ and $\cQ\subseteq\Re^{1+r}$ be a second-order cone and $\bfz, \bfr \in\Re^{1+r}$.

\begin{itemize}
\item [\bf (1)]
If $\bfz\in -\interior\cQ$,
then $\cD^{\ast}\Pi_{\cQ}(\bfz)(\bfr)=\{\bfzero\}$.

\item [\bf (2)]
If $\bfz\in\interior\cQ$, then
$\cD^{\ast}\Pi_{\cQ}(\bfz)(\bfr)=\{\bfr\}$.

\item [\bf (3)]
If $\bfz\notin\cQ\cup(-\cQ)$,
then
$
\cD^{\ast}\Pi_{\cQ}(\bfz)(\bfr)
=\left\{\frac{1}{2}
\begin{pmatrix} 1 & \frac{\bar{\bfz}^\top }{\|\bar{\bfz}\|}
\\
\frac{\bar{\bfz}}{\|\bar{\bfz}\|} & \left(1+\frac{\dot z}{\|\bar{\bfz}\|}\right)I_{r} - \frac{\dot z}{\|\bar{\bfz}\|} \frac{\bar{\bfz}\bar{\bfz}^\top }{\|\bar{\bfz}\|^2}
\end{pmatrix}\bfr
\right\}
$.

\item  [\bf (4)]
If $ \bfz\in\boundary \cQ\setminus\{\bfzero\} $, then
$\cD^{\ast}\Pi_{\cQ}(\bfz)(\bfr)=
\begin{cases}
\conv\{\bfr, A(\bfz)\bfr\}, & \text{if } \langle \bfr, \bfc^1(\bfz) \rangle \ge 0, \\
 \{\bfr, A(\bfz)\bfr\}, & \mbox{ otherwise}.
\end{cases}$

\item [\bf (5)]
If $ \bfz\in\boundary (-\cQ)\backslash\{\bfzero\} $, then
$\cD^{\ast}\Pi_{\cQ}(\bfz)(\bfr)=
\begin{cases}
\conv\{\bfzero, B(\bfz)\bfr\}, & \text{if } \langle \bfr, \bfc^2(\bfz)\rangle \geq 0, \\
\{\bfzero, B(\bfz)\bfr\}, & \text{otherwise}.
\end{cases}$

\item [\bf (6)]
If $\bfz=\bfzero$, then
$$
\cD^{\ast}\Pi_{\cQ}(\bfzero)(\bfr)=
\big\{{\partial}_{B}\Pi_{\cQ}(\bfzero)\bfr \big\}
\cup
\big\{\cQ\cap (\bfr-\cQ)\big\}
\cup\bigcup_{A\in\cA}\conv\{\bfr, A\bfr\} \cup\bigcup_{B\in\cB}\conv\{\bfzero, B\bfr\},
$$
where the two sets $\cA$ and $\cB$ are defined by
$$
\begin{aligned}
\cA := {}&
\left\{
I_{1+r}+\frac{1}{2}  \begin{pmatrix}  -1 & \bfw^\top \\ \bfw & -\bfw\bfw^\top  \end{pmatrix}
\,\middle| \,
\bfw \in\Re^{r}, \|\bfw\|=1,
\langle\bfr, (1; -\bfw) \rangle \geq 0
\right\},
\\
\cB := {}&
\left\{
\frac{1}{2}  \begin{pmatrix}  1 & \bfw^\top \\ \bfw & \bfw\bfw^\top  \end{pmatrix}
\,\middle| \,
\bfw \in\Re^{r}, \|\bfw\|=1,
\langle \bfr, (1; \bfw)\rangle \geq 0 \right\}.
\end{aligned}
$$
\end{itemize}
\end{lemma}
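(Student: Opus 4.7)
The plan is to work directly from the definition $\bfp\in\cD^{\ast}\Pi_{\cQ}(\bfz)(\bfr)$ iff $(\bfp,-\bfr)\in\cN_{\gr\Pi_{\cQ}}(\bfz,\Pi_{\cQ}(\bfz))$, partitioning the analysis by the stratum in which $\bfz$ sits. The enabling facts are that $\Pi_{\cQ}$ is globally Lipschitz, continuously differentiable off the two surfaces $\{\dot z=\pm\|\bar{\bfz}\|\}$, and admits the explicit spectral formula recalled before the lemma. At any smooth point $\bfz'$ the graph is locally a smooth manifold whose regular normal cone at $(\bfz',\Pi_{\cQ}(\bfz'))$ is exactly $\{(\cJ\Pi_{\cQ}(\bfz')^{\top}\bfs,-\bfs)\mid \bfs\in\Re^{1+r}\}$; taking Painlev\'e-Kuratowski outer limits along smooth approach sequences $\bfz'_k\to\bfz$ then recovers the contributions $\{M^{\top}\bfr\mid M\in\partial_{B}\Pi_{\cQ}(\bfz)\}$, and any further normals must come from nonsmooth approach sequences, where the regular normal cone at $\bfz'_k$ already contains more than a single direction.

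Cases (1)--(3) are immediate. In (1) and (2), $\Pi_{\cQ}$ is locally constant $\bfzero$ or the identity, so $\partial_{B}\Pi_{\cQ}(\bfz)$ is a single matrix and the coderivative is a singleton. In (3), $\bfz\notin\cQ\cup(-\cQ)$ forces $|\dot z|<\|\bar{\bfz}\|$, hence $\sigma_{1}(\bfz)<0<\sigma_{2}(\bfz)$ and $\Pi_{\cQ}(\bfz)=(\dot z+\|\bar{\bfz}\|)\bfc^{2}(\bfz)$ is smooth there; a direct differentiation of this expression produces exactly the displayed Jacobian.

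In cases (4) and (5), $\bfz$ lies on a single singular hypersurface away from the origin. Smooth approaches from the interior side and the exterior side produce the two Bouligand matrices: $\partial_{B}\Pi_{\cQ}(\bfz)=\{I_{1+r},A(\bfz)\}$ in case (4) and $\{O_{1+r},B(\bfz)\}$ in case (5). This already delivers the two-element sets $\{\bfr,A(\bfz)\bfr\}$ and $\{\bfzero,B(\bfz)\bfr\}$. The convex-hull enlargement under the sign condition $\langle\bfr,\bfc^{i}(\bfz)\rangle\ge 0$ arises from nonsmooth approach sequences that remain on the hypersurface itself, along which the regular normal cone to $\gr\Pi_{\cQ}$ acquires a one-parameter family parameterized by the tangential degree of freedom of $\Pi_{\cQ}$ on the stratum; the sign condition is exactly the orientation that keeps $-\bfr$ in the cone of admissible multipliers, thereby permitting the segment $t\bfr+(1-t)A(\bfz)\bfr$ (resp. $t\bfzero+(1-t)B(\bfz)\bfr$) for $t\in[0,1]$.

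The hard case, and the core of the proof, is (6). At $\bfz=\bfzero$ all five preceding strata are accumulation regions, so limits of graph-normals inherit contributions from each. The piece $\{\partial_{B}\Pi_{\cQ}(\bfzero)\bfr\}$ absorbs everything produced by purely smooth approach sequences via the explicit formula \eqref{partialbzero}. The piece $\cQ\cap(\bfr-\cQ)$ must be identified as the set attained by approaching along the exterior region $\Re^{1+r}\setminus(\cQ\cup-\cQ)$ while \emph{simultaneously} relaxing the multiplier direction at the origin of the graph: as $\bar{\bfz}/\|\bar{\bfz}\|$ and $\dot z/\|\bar{\bfz}\|\in(-1,1)$ vary, the case-(3) Jacobian applied to admissible multipliers sweeps precisely the family $\{\bfp:\bfp\in\cQ,\ \bfr-\bfp\in\cQ\}$, and this identification needs a direct, if mildly tedious, coordinate calculation. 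The remaining two families $\bigcup_{A\in\cA}\conv\{\bfr,A\bfr\}$ and $\bigcup_{B\in\cB}\conv\{\bfzero,B\bfr\}$ arise from approaching $\bfzero$ along $\boundary\cQ\setminus\{\bfzero\}$ and $\boundary(-\cQ)\setminus\{\bfzero\}$ respectively, reusing the mechanism of cases (4) and (5); the defining sign restrictions on $\bfw$ in $\cA$ and $\cB$ are precisely the $\langle\bfr,\bfc^{i}\rangle\ge 0$ conditions pulled back under $\bar{\bfz}/\|\bar{\bfz}\|\to\pm\bfw$. The main difficulty I foresee is to prove that the union of these five pieces \emph{exhausts} $\cD^{\ast}\Pi_{\cQ}(\bfzero)(\bfr)$ and not merely that it is contained in it; for this I would argue that any sequence of graph-normals at $(\bfz'_{k},\Pi_{\cQ}(\bfz'_{k}))$ with $\bfz'_{k}\to\bfzero$ can, up to a subsequence, be sorted into one of the five listed stratum types, and then apply the corresponding case-(1)--(5) analysis to pass to the limit.
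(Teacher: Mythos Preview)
The paper does not prove this lemma at all: immediately before the statement it says ``In \cite{O&S2008}, the limiting coderivative $\cD^{\ast}\Pi_{\cQ}$ was explicitly calculated as in the following lemma,'' and then simply quotes the result. So there is no proof in the paper to compare against; your proposal goes well beyond what the paper does by attempting the actual computation that Outrata and Sun carried out in \cite{O&S2008}.

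Your outline is broadly in the spirit of the original derivation: smooth strata give singletons via the Jacobian, the two boundary hypersurfaces contribute two B-subdifferential matrices plus a convex segment when the sign condition holds, and case (6) is assembled from limits along all strata together with the regular normal cone at the origin. One point in your sketch is not quite right, however. You attribute the piece $\cQ\cap(\bfr-\cQ)$ in case (6) to limits of case-(3) Jacobians as the exterior direction varies. In fact that piece is exactly the contribution of the \emph{regular} normal cone at $(\bfzero,\bfzero)$ itself: since $\Pi_{\cQ}$ is positively homogeneous, $\gr\Pi_{\cQ}$ is a cone, so $\widehat{\cN}_{\gr\Pi_{\cQ}}(\bfzero,\bfzero)=(\gr\Pi_{\cQ})^{\circ}$, and using the Moreau parametrization $\gr\Pi_{\cQ}=\{(\bfu+\bfv,\bfu)\mid \bfu\in\cQ,\ \bfv\in-\cQ,\ \langle\bfu,\bfv\rangle=0\}$ one checks directly that $(\bfp,-\bfr)\in(\gr\Pi_{\cQ})^{\circ}$ iff $\bfp\in\cQ\cap(\bfr-\cQ)$. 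The limits of case-(3) Jacobians are already absorbed in the $\partial_{B}\Pi_{\cQ}(\bfzero)\bfr$ term via \eqref{partialbzero}. With that correction, your plan matches the structure of the proof in \cite{O&S2008}; the exhaustion argument you flag as the main difficulty is indeed where most of the work lies and is handled there by the stratum-by-stratum subsequence argument you describe.
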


To develop the main result of this paper,
we need the following result on coderivatives of the normal cone mapping to a second-order cone,
which is a combination of Lemmas \ref{codchange} and \ref{projcod}.

\begin{lemma}
\label{codersecond}
Let $r\ge 1$ and $\cQ\subseteq\Re^{1+r}$ be a second-order cone with $\bfu\in\cQ$ and $\bfv\in \cN_{\cQ}(\bfu)$.
\smallskip
\begin{itemize}
\item [\bf (1)]
If $\bfu=\bfzero$ and $\bfv\in-\interior\cQ$, then $\cD^*\cN_{\cQ}(\bfu,\bfv)(\bfzero)=\Re^{1+r}$
and  $\cD^*\cN_{\cQ}(\bfu,\bfv)(\bfq)=\emptyset$, $\forall \, \bfzero \neq \bfq \in \Re^{1+r}$.

\smallskip
\item[\bf (2)]
If $\bfu\in\interior\cQ$ and $\bfv=\bfzero$, then
$\cD^*\cN_{\cQ}(\bfu,\bfv)(\bfq)=\{\bfzero\}$, $\forall \, \bfq\in \Re^{1+r}$.

\item[\bf (3)]
If $\bfu\in\boundary\cQ\setminus\{\bfzero\}$ and $\bfv\in-\boundary\cQ\setminus\{\bfzero\}$,
then for any $\bfq\in\Re^{1+r}$ it holds that
$$
\cD^*\cN_{\cQ}(\bfu,\bfv)(\bfq)
=
\begin{cases}
\left\{
\left(\frac{\dot v}{\dot u}\dot q-\tau;\,
-\frac{\dot v}{\dot u}\bar\bfq+\tau\frac{\bar\bfu}{\|\bar\bfu\|}\right)
\ \middle| \
\tau\in\Re
\right\},
&\mbox{if}\
\langle \bfq,\bfv\rangle=0,
\\
\emptyset,
&\mbox{otherwise}.
\end{cases}
$$

\item[\bf (4)]
If $\bfu\in\boundary\cQ\setminus\{\bfzero\}$ and $\bfv=\bfzero$, then for any  $\bfq\in\Re^{1+r}$ it holds that
$$
\cD^*\cN_{\cQ}(\bfu,\bfv)(\bfq)=\left\{
\tau \left(1 ; -\frac{\bar \bfu}{\|\bar\bfu\|} \right)
\, \middle| \,
\begin{array}{ll}
\tau=0,  &  \mbox{if } \langle (1;-\frac{\bar\bfu}{\|\bar\bfu\|}),\bfq \rangle>0,
\\
\tau\in\Re, &\mbox{if } \langle (1;-\frac{\bar\bfu}{\|\bar\bfu\|}),\bfq \rangle=0,
\\
\tau\le 0, &\mbox{if } \langle (1;-\frac{\bar\bfu}{\|\bar\bfu\|}),\bfq \rangle <0
\end{array}
\right\}.
$$

\item[\bf (5)]
If $\bfu=\bfzero$ and $\bfv\in-\boundary\cQ\setminus\{\bfzero\}$, then for any $\bfq\in\Re^{1+r}$ it holds that
$$
\cD^*\cN_{\cQ}(\bfu,\bfv)(\bfq)
=
\begin{cases}
\left\{ \bfp\in\Re^{1+r} \,\Big | \,
\begin{array}{ll}
\langle \bfp, \bfq \rangle=0, &\mbox{if } \tau>0,
\\
\langle\bfp, \bfq \rangle  \ge 0, &\mbox{if } \tau <0
\end{array}\right\},
& \mbox{if } \bfq=\tau(1;\frac{\bar\bfv}{\|\bar\bfv\|}),
\\
\emptyset, & \mbox{otherwise}.
\end{cases}
$$

\item[\bf (6)]
If $\bfu=\bfv=\bfzero$, then the following conclusions hold:

\begin{itemize}

\item[\bf (6a)]
$\cD^*\cN_{\cQ}(\bfu,\bfv)(\bfzero)=\Re^{1+r}$.

\item[\bf (6b)]
$\bfp\in\cD^*\cN_{\cQ}(\bfu,\bfv)(\bfq)$ holds if $\bfp,\bfq\in-\cQ$.

\item[\bf (6c)]
$\bfp\in\cD^*\cN_{\cQ}(\bfu,\bfv)(\bfq)$ holds if
$\bfp \in -\boundary\cQ\setminus\{\bfzero\}$,  $\bfq \in\Re^{1+r}$
with $\langle \bfp,\bfq \rangle \ge 0$.

\item[\bf (6d)]
$\bfp\in\cD^*\cN_{\cQ}(\bfu,\bfv)(\bfq)$ holds if
$\bfp \in\Re^{1+r}$,
$\bfq \in -\boundary\cQ\setminus\{\bfzero\}$ with $\langle \bfp,\bfq \rangle \ge 0$.

\item[\bf (6e)]
$\bfzero\in\cD^*\cN_{\cQ}(\bfu,\bfv)(\bfq)$, $\forall \,\bfq \in\Re^{1+r}$.

\end{itemize}

\end{itemize}
\end{lemma}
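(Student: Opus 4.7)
The plan is to derive every case by combining Lemma \ref{codchange} with Lemma \ref{projcod}. Lemma \ref{codchange} gives the equivalence $\bfp \in \cD^*\cN_{\cQ}(\bfu,\bfv)(\bfq) \Leftrightarrow -\bfq \in \cD^*\Pi_{\cQ}(\bfu+\bfv)(-\bfq-\bfp)$, so the computation reduces to identifying which stratum the vector $\bfz := \bfu + \bfv$ occupies, substituting $\bfr := -\bfq - \bfp$ into the corresponding formula of Lemma \ref{projcod}, and solving for $\bfp$ given $\bfq$. Throughout I exploit complementarity: since $\bfu\in\cQ$ and $\bfv\in\cN_{\cQ}(\bfu)= -\cQ \cap \{\bfu\}^\perp$, when both $\bfu$ and $-\bfv$ are nonzero they share a common spectral frame, which rigidly determines the structure of $\bfz$.

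The easy cases (1) and (2) are immediate: $\bfz$ lies in $-\interior\cQ$ or $\interior\cQ$ respectively, and Lemma \ref{projcod}(1)--(2) gives $\cD^*\Pi_{\cQ}(\bfz)(\bfr) = \{\bfzero\}$ or $\{\bfr\}$, forcing the stated constraints on $\bfp$ and $\bfq$. For cases (3)--(5) I use the complementarity-forced spectral alignment. In case (3), $\langle \bfu,\bfv\rangle = 0$ together with $\dot u = \|\bar\bfu\|$ and $-\dot v = \|\bar\bfv\|$ yields $\bar\bfv = (\dot v/\dot u)\bar\bfu$, hence $\bfz = (\dot u+\dot v)(1;\bar\bfu/\|\bar\bfu\|)$, which lands on $\boundary\cQ\setminus\{\bfzero\}$, on $\boundary(-\cQ)\setminus\{\bfzero\}$, or at the origin according to the sign of $\dot u+\dot v$. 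All three sub-scenarios collapse to the same formula because the spectral direction $\bar\bfu/\|\bar\bfu\|$ is fixed and the free parameter $\tau$ absorbs the branch-dependent degrees of freedom; the orthogonality constraint $\langle \bfq,\bfv\rangle=0$ emerges from the compatibility condition between $\bfr$ and the matrix $A(\bfz)$ or $B(\bfz)$. Cases (4) and (5) are direct applications of Lemma \ref{projcod}(4)--(5): in (4), $\bfz = \bfu\in \boundary\cQ\setminus\{\bfzero\}$ so $A(\bfu)\bfr$ simplifies, and the convex-hull branching yields the three sign-conditioned formulas for $\tau$ read off $\langle (1;-\bar\bfu/\|\bar\bfu\|),\bfq\rangle$; in (5), $\bfz = \bfv\in \boundary(-\cQ)\setminus\{\bfzero\}$ and the matrix $B(\bfv)$ produces the corresponding answer, with the parameterization $\bfq = \tau(1;\bar\bfv/\|\bar\bfv\|)$ coming from solving $-\bfq = \lambda\bfr + (1-\lambda)B(\bfv)\bfr$.

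The main obstacle is case (6), where $\bfz = \bfzero$ and Lemma \ref{projcod}(6) expresses the coderivative as a union of five pieces: the Bouligand-type contribution $\partial_B\Pi_{\cQ}(\bfzero)\bfr$ from \eqref{partialbzero}, the intersection $\cQ\cap(\bfr-\cQ)$, and two convex-hull families indexed by unit vectors $\bfw\in\Re^r$ subject to sign constraints on $\bfr$. Since the statement (6) is formulated as five sufficient conditions (6a)--(6e) rather than a full characterization, for each it suffices to exhibit one branch of the union that witnesses the inclusion $-\bfq \in \cD^*\Pi_{\cQ}(\bfzero)(-\bfq-\bfp)$. I would verify (6a) by noting $\bfr = \bfzero$ is attained via the zero Jacobian in $\partial_B\Pi_{\cQ}(\bfzero)$; (6b) by showing that $\bfp,\bfq\in -\cQ$ permits a valid choice of $\bfr\in\cQ\cap(\bfr-\cQ)$; (6c) and (6d) by choosing $\bfw$ collinear with the given boundary vector, so that the sign hypothesis $\langle \bfp,\bfq\rangle\ge 0$ places $\bfr$ inside the appropriate convex hull in $\cA$ or $\cB$; and (6e) via the identity-Jacobian branch of $\partial_B\Pi_{\cQ}(\bfzero)$. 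The careful bookkeeping of which $\bfw$ and which branch of Lemma \ref{projcod}(6) to invoke, and verifying the associated orientation inequalities linking $\bfr$, $\bfp$, and $\bfq$, is where the bulk of the case-work lies.
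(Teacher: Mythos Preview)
Your overall strategy---translate via Lemma~\ref{codchange} and read off the answer from the appropriate stratum in Lemma~\ref{projcod}---is exactly the paper's, and your handling of cases (1), (2), (4), (5), and (6a)--(6e) matches the paper's argument essentially line for line.

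There is, however, a concrete error in your treatment of case~(3). From $\langle \bfu,\bfv\rangle=0$, $\dot u=\|\bar\bfu\|$, and $-\dot v=\|\bar\bfv\|$ one gets $\bar\bfv=-(\dot v/\dot u)\,\bar\bfu$ (the sign is forced by $-\dot v/\dot u>0$ and Cauchy--Schwarz equality), not $\bar\bfv=(\dot v/\dot u)\,\bar\bfu$. With the correct sign,
\[
\bfz=\bfu+\bfv=\Big(\dot u+\dot v;\,\big(1-\tfrac{\dot v}{\dot u}\big)\bar\bfu\Big),
\qquad
\|\bar\bfz\|=\dot u-\dot v>|\dot u+\dot v|=|\dot z|,
\]
so $\bfz\notin\cQ\cup(-\cQ)$ always. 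Hence Part~(3) of Lemma~\ref{projcod} applies directly and there is no three-way branch on the sign of $\dot u+\dot v$; your plan to route through Parts~(4), (5), or (6) and then argue that ``all three sub-scenarios collapse'' is based on a miscomputed $\bfz$ and would have you evaluating $\cD^*\Pi_{\cQ}$ at the wrong point. Once you use Part~(3), the matrix $W$ there has $W\bfv=\bfzero$, which immediately yields the compatibility condition $\langle\bfq,\bfv\rangle=0$, and solving $W\bfp=(I-W)\bfq$ gives the stated one-parameter family in $\tau$.
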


\smallskip
\begin{proof}

{\bf (1)}
Since $\bfu=0$ and $\bfv\in-\interior\cQ$,
one has $\bfu+\bfv\in-\interior\cQ$.
From Part (1) of Lemma \ref{projcod} one can see that
$\cD^{\ast}\Pi_{\cQ}(\bfu+\bfv)(\bfr)=\{\bfzero\}$ for all $\bfr\in\Re^{1+r}$.
By Lemma \ref{codchange},
such a condition implies
$-\bfr \in\cD^*\cN_{\cQ}(\bfu,\bfv)(\bfzero)$
for all $\bfr\in\Re^{1+r}$
so that $\cD^*\cN_{\cQ}(\bfu,\bfv)(\bfzero)=\Re^{1+r}$.
Moreover, if there exists two vectors $\bfp,\bfq\in\Re^{1+r}$ such that $\bfp\in \cD^*\cN_{\cQ}(\bfu,\bfv)(\bfq)$ with $\bfq\neq\bfzero$,
one has from
Lemma \ref{codchange} that $-\bfq\in \cD^{\ast}\Pi_{\cQ}(\bfu+\bfv)(-\bfp-\bfq)$, which constitutes a contradiction to the fact that $\cD^{\ast}\Pi_{\cQ}(\bfu+\bfv)(-\bfp-\bfq)=\{\bfzero\}$.

\smallskip
{\bf (2)}  Since  $\bfu\in\interior\cQ$ and $\bfv=\bfzero$, one has $\bfu+\bfv\in\interior\cQ$.
From Part (2) of Lemma \ref{projcod} one can see that
$\cD^{\ast}\Pi_{\cQ}(\bfu+\bfv)(\bfr)=\{\bfr\}$ for all $ \bfr\in\Re^{1+r}$.
Such a condition is equivalent, by Lemma \ref{codchange}, to the condition that
$\cD^*\cN_{\cQ}(\bfu,\bfv)(-\bfr)=\{\bfzero\}$
for all $\bfr\in\Re^{1+r}$.
Therefore, one has $\cD^*\cN_{\cQ}(\bfu,\bfv)(\bfq)=\{\bfzero\}$ for all $\bfq\in\Re^{1+r}$.

\smallskip
{\bf (3)}
Note that $\bfv\in \cN_{\cQ}(\bfu)$.
Since $\bfu\in\boundary\cQ\setminus\{\bfzero\}$ and
$\bfv\in-\boundary\cQ\setminus\{\bfzero\}$,
one has from \cite[Lemma 15]{aliz2003} that
$-\bfv=-\frac{\dot v}{\dot u}(\dot u;-\bar\bfu)$ and $-\frac{\dot v}{\dot u}>0$.
Therefore, one can get
$\bfu+\bfv=\big((1+\frac{\dot v}{\dot u})\dot u;(1-\frac{\dot v}{\dot u})\bar\bfu\big)$.
It follows that $\bfu+\bfv\notin\cQ\cup(-\cQ)$.
Then by Part $(3)$ of Lemma \ref{projcod} one can get
$\cD^{\ast}\Pi_{\cQ}(\bfu+\bfv)(\bfr)=\{W\bfr\}$
for all $\bfr\in\Re^{1+r}$,
where
$$
\begin{array}{ll}
W=
\\
\frac{1}{2}
\begin{pmatrix}
1&\frac{(\bar\bfu +\bar\bfv)^\top}{\|\bar\bfu +\bar\bfv\|}
\\
\frac{\bar\bfu +\bar\bfv}{\|\bar\bfu +\bar\bfv\|}&
(1+\frac{\dot u +\dot v}{\| \bar\bfu +\bar\bfv\|})
I_{r}-\frac{\dot u+\dot v}{\|\bar\bfu+\bar\bfv\|}
\frac{(\bar\bfu+\bar\bfv)(\bar\bfu+\bar\bfv)^\top}{\|\bar\bfu+\bar\bfv\|^2}
\end{pmatrix}
=
\frac{1}{2}
\begin{pmatrix}
1&\frac{\bar\bfu^\top}{\|\bar\bfu\|} \\
\frac{\bar\bfu}{\|\bar\bfu\|} &
\frac{2\dot u}{{\dot u}-{\dot v}} I_{r}
-\frac{{\dot u}+{\dot v}}{{\dot u}-{\dot v}}
\frac{\bar\bfu\bar\bfu^\top}{\|\bar\bfu\|^2}
\end{pmatrix}.
\end{array}
$$
This condition is equivalent, by Lemma \ref{codchange}, to the following condition:
\[
\label{d3preliminary}
\cD^*\cN_{\cQ}(\bfu,\bfv)(\bfq)=\{\bfp\mid \bfp =-\bfq-\bfr, -W\bfr=\bfq\}
=
\{\bfp\mid W\bfp=(I-W)\bfq\}
,\quad\forall \,\bfq\in\Re^{1+r}.
\]
It is easy to see that $W (\dot u;-\bar\bfu)=\bfzero$, so that $W \bfv
= \frac{\dot v}{\dot u} W (\dot u;-\bar\bfu)=\bfzero$.
Since $W^\top=W$, the condition $W\bfp=(I-W)\bfq$ implies that $\langle \bfq,\bfv\rangle=\langle \bfq+\bfp, W \bfv\rangle =0$.
Moreover, it is equivalent to
$$
\frac{1}{2}
\begin{pmatrix}
\dot p+\frac{\bar\bfu^\top\bar\bfp}{\|\bar\bfu\|}
\\
\frac{\dot p}{\|\bar\bfu\|} \bar\bfu
+
\frac{2\dot u}{{\dot u}-{\dot v}}
\bar \bfp
-\frac{{\dot u}+{\dot v}}{{\dot u}-{\dot v}}
\frac{\bar\bfu^\top\bar \bfp}{\|\bar\bfu\|^2}\bar\bfu
\end{pmatrix}
=
\frac{1}{2}
\begin{pmatrix}
\dot q -\frac{\bar\bfu^\top\bar \bfq}{\|\bar\bfu\|}
\\
-\frac{\dot q}{\|\bar\bfu\|} \bar\bfu
-\frac{2\dot v}{{\dot u}-{\dot v}} \bar \bfq
+\frac{{\dot u}+{\dot v}}{{\dot u}-{\dot v}}
\frac{\bar\bfu^\top\bar\bfq}{\|\bar\bfu\|^2}\bar\bfu
\end{pmatrix}.
$$
Note that $\dot v\neq 0$ and
${\dot v}\big(\dot q -\frac{\bar\bfu^\top\bar \bfq}{\|\bar\bfu\|}\big)
={\dot v} \langle \bfq, (1;-\frac{\bar\bfu}{\|\bar \bfu\|})\rangle
=\frac{\dot v}{\dot u}\langle \bfq, (\dot u;-\bar\bfu)\rangle
=\langle \bfq, \bfv\rangle
=0$.
Thus,
$$
\begin{array}{ll}
W\bfp=(I-W)\bfq
\\[1mm]
\Leftrightarrow
\dot p+\frac{\bar\bfu^\top\bar\bfp}{\|\bar\bfu\|}
=0
\ \mbox{and} \
-\frac{ \bar\bfu^\top\bar\bfp }{\|\bar\bfu\|^2} \bar\bfu
+
\frac{2\dot u}{{\dot u}-{\dot v}} \bar\bfp
-
\frac{{\dot u}+{\dot v}}{{\dot u}-{\dot v}}
\frac{\bar\bfu^\top\bar\bfp}{\|\bar\bfu\|^2}\bar\bfu
=
-\frac{\bar\bfu^\top\bar \bfq}{\|\bar\bfu\|^2} \bar\bfu
-\frac{2\dot v}{{\dot u}-{\dot v}} \bar \bfq
+\frac{{\dot u}+{\dot v}}{{\dot u}-{\dot v}}
\frac{\bar\bfu^\top\bar \bfq}{\|\bar\bfu\|^2}\bar\bfu
\\
\Leftrightarrow
\dot p+\frac{\bar\bfu^\top\bar\bfp}{\|\bar\bfu\|}
=0
\ \mbox{and} \
-\frac{ \bar\bfu \bar\bfu^\top}{\|\bar\bfu\|^2}
\bar\bfp
+
\frac{2\dot u}{{\dot u}-{\dot v}} \bar\bfp
-
\frac{{\dot u}+{\dot v}}{{\dot u}-{\dot v}}
\frac{\bar\bfu\bar\bfu^\top}{\|\bar\bfu\|^2}\bar\bfp
=
-\frac{\bar\bfu\bar\bfu^\top}{\|\bar\bfu\|^2}\bar \bfq
-\frac{2\dot v}{{\dot u}-{\dot v}} \bar \bfq
+\frac{{\dot u}+{\dot v}}{{\dot u}-{\dot v}}
\frac{\bar\bfu\bar\bfu^\top}{\|\bar\bfu\|^2}\bar \bfq
\\
\Leftrightarrow
\dot p+\frac{\bar\bfu^\top\bar\bfp}{\|\bar\bfu\|}
=0
\ \mbox{and} \
\frac{2\dot u}{{\dot u}-{\dot v}} \bar\bfp
-
\frac{2{\dot u}}{{\dot u}-{\dot v}}
\frac{\bar\bfu\bar\bfu^\top}{\|\bar\bfu\|^2}\bar\bfp
=
-\frac{2\dot v}{{\dot u}-{\dot v}} \bar \bfq
+\frac{2 {\dot v}}{{\dot u}-{\dot v}}
\frac{\bar\bfu\bar\bfu^\top}{\|\bar\bfu\|^2}\bar \bfq
\\
\Leftrightarrow
\dot p+\frac{\bar\bfu^\top\bar\bfp}{\|\bar\bfu\|}
=0
\ \mbox{and} \
(I_{r}-\frac{\bar\bfu\bar\bfu^\top}{\|\bar\bfu\|^2})\bar\bfp
=
-\frac{\dot v}{\dot u}
(I_r-\frac{\bar\bfu\bar\bfu^\top}{\|\bar\bfu\|^2})\bar \bfq
\\
\Leftrightarrow
\dot p+\frac{\bar\bfu^\top\bar\bfp}{\|\bar\bfu\|}
=0
\ \mbox{and} \
(I_{r}-\frac{\bar\bfu\bar\bfu^\top}{\|\bar\bfu\|^2})
(\bar\bfp+\frac{\dot v}{\dot u}\bar\bfq)=\bfzero
\\
\Leftrightarrow
\dot p+\frac{\bar\bfu^\top\bar\bfp}{\|\bar\bfu\|}
=0
\quad\mbox{and}\quad
\bar\bfp\in
\big\{\tau \frac{\bar \bfu}{\|\bar \bfu\|} -\frac{\dot v}{\dot u} \bar\bfq\mid \tau\in \Re\big\}
\\[1mm]
\Leftrightarrow
\bfp\in \left\{
\left(\frac{\dot v}{\dot u}\dot q-\tau;\,
-\frac{\dot v}{\dot u}\bar\bfq+\tau\frac{\bar\bfu}{\|\bar\bfu\|}\right)
\ \mid \
\tau\in\Re
\right\},
\end{array}
$$
which, together with \eqref{d3preliminary}, implies the explicit formulation of $\cD^*\cN_{\cQ}(\bfu,\bfv)(\bfq)$.

\smallskip
{\bf (4)}
Since $\bfu\in\boundary\cQ\setminus\{\bfzero\}$ and $\bfv=\bfzero$, one has
$ \bfu+\bfv=\bfu \in\boundary{\cQ}\setminus\{\bfzero\}$.
From Lemma \ref{codchange} we know that
$\bfp\in\cD^*\cN_{\cQ}(\bfu,\bfv)(\bfq)$ if and only if
$-\bfq\in\cD^{\ast}\Pi_{\cQ}(\bfu+\bfv)(-\bfp-\bfq)$,
which is equivalent, by Part $(4)$ of Lemma \ref{projcod}, to the following condition:
\[
\label{codequiv}
-\bfq\in
\begin{cases}
\conv\{-\bfp-\bfq, A(\bfu)(-\bfp-\bfq)\}, & \mbox{if } \langle (-\bfp-\bfq), (1;-\frac{\bar\bfu}{\|\bar\bfu\|}) \rangle \ge 0, \\
 \{-\bfp-\bfq, A(\bfu)(-\bfp-\bfq)\}, & \mbox{ otherwise},
\end{cases}
\quad
\forall \,\bfp,\bfq \in\Re^{1+r},
\]
where the definition of $A(\bfu)$ comes from \eqref{defab}.
Note that \eqref{codequiv} holds if and only if
\[
\label{pcondition}
\bfp=\frac{\varrho}{2}
\begin{pmatrix}
1
\\
-\frac{\bar{\bfu}}{\|\bar{\bfu}\|}
\end{pmatrix}
\begin{pmatrix}
1, -\frac{\bar{\bfu}^\top}{\|\bar{\bfu}\|}
\end{pmatrix}
{(\bfp+\bfq)},
\
\varrho\in
\begin{cases}
[0,1], & \mbox{if } -\langle \bfp , (1;-\frac{\bar\bfu}{\|\bar\bfu\|}) \rangle
\ge
\langle \bfq , (1;-\frac{\bar\bfu}{\|\bar\bfu\|}) \rangle,
\\
\{0,1\}, & \mbox{ otherwise}.
\end{cases}
\]
Moreover,  \eqref{pcondition} implies
\[
\label{check4}
\begin{array}{ll}
(1-\varrho)\langle (1; -\frac{\bar{\bfu}}{\|\bar{\bfu}\|}),\bfp\rangle
=\varrho\langle (1; -\frac{\bar{\bfu}}{\|\bar{\bfu}\|}),{\bfq}\rangle.
\end{array}
\]
For a given vector $\bfq\in\Re^{1+r}$, we consider the following three cases,
based on the value of
$\langle (1;-\frac{\bar\bfu}{\|\bar\bfu\|}),\bfq\rangle$, to get the equivalent conditions of
$\bfp\in\cD^*\cN_{\cQ}(\bfu,\bfv)(\bfq)$ from \eqref{pcondition}:

\begin{itemize}
\item[{\bf -}]
If $\langle (1;-\frac{\bar\bfu}{\|\bar\bfu\|}),\bfq\rangle>0$, one can not take $\varrho=1$,
or else \eqref{check4} fails to hold.
When $\varrho\in(0,1)$, by \eqref{check4} one should have
$\langle (1;-\frac{\bar\bfu}{\|\bar\bfu\|}),\bfp\rangle>0$,
but one can not find a vector $\bfp$ such that \eqref{pcondition} holds in this case.
Therefore, one can only take $\varrho=0$, so that $\bfp=\bfzero$ by the equality in \eqref{pcondition}.
In this case, it is easy to see that \eqref{pcondition} holds.
Thus, $\cD^*\cN_{\cQ}(\bfu,\bfv)(\bfq)=\{\bfzero\}$.

\item[{\bf -}]
If $\langle (1;-\frac{\bar\bfu}{\|\bar\bfu\|}),\bfq \rangle=0$,
by \eqref{check4} one has $\langle (1;-\frac{\bar\bfu}{\|\bar\bfu\|}),\bfp \rangle=0$
whenever $\varrho\in[0,1)$.
In this case, \eqref{pcondition} holds if and only if $\bfp=\bfzero$.
Moreover,  when $\varrho=1$, \eqref{check4} always holds.
In this case,  \eqref{pcondition} holds if and only if $\bfp=\frac{1}{2}(1,-\frac{\bar\bfu^\top}{\|\bar\bfu\|})\bfp (1;-\frac{\bar\bfu}{\|\bar\bfu\|} )$.
Thus,  $\cD^*\cN_{\cQ}(\bfu,\bfv)(\bfq)=\{\tau (1 ; -\frac{\bar \bfu}{\|\bar\bfu\|})\mid \tau \in\Re\}$.

\item[{\bf -}]
If $\langle (1;-\frac{\bar\bfu}{\|\bar\bfu\|}),\bfq \rangle<0$, one can not take $\varrho=1$, or else \eqref{check4} fails to hold.
When $\varrho=0$, it is easy to see that \eqref{pcondition} holds if and only if $\bfp=\bfzero$.
When $\varrho\in(0,1)$, by \eqref{check4} one should have
$\langle (1;-\frac{\bar\bfu}{\|\bar\bfu\|}),\bfp \rangle<0$.
Therefore, \eqref{pcondition} holds if and only if
$\bfp=\tau ( 1;-\frac{\bar\bfu}{\|\bar\bfu\|})$
with
$\tau:=\frac{\varrho}{2}(1,-\frac{\bar\bfu^\top}{\|\bar\bfu\|})(\bfp+\bfq)\le 0$.
Note that such a condition is valid for any $\tau<0$ since
$\varrho=\frac{2\tau}{2\tau+\langle(1;-\frac{\bar\bfu}{\|\bar\bfu\|}),\bfq \rangle}\in(0,1)$ always holds.
Thus, $\cD^*\cN_{\cQ}(\bfu,\bfv)(\bfq)=\{\tau (1 ; -\frac{\bar \bfu}{\|\bar\bfu\|})\mid \tau \le 0\}$.
\end{itemize}

{\bf (5)}
Since $\bfu=\bfzero$ and $\bfv\in-\boundary\cQ\setminus\{\bfzero\}$,
one has $ \bfu+\bfv=\bfv\in-\boundary\cQ\setminus\{\bfzero\}$.
From Lemma \ref{codchange} we know that
$\bfp\in\cD^*\cN_{\cQ}(\bfu,\bfv)(\bfq)$ if and only if
$-\bfq\in\cD^{\ast}\Pi_{\cQ}(\bfu+\bfv)(-\bfp-\bfq)$,
which is equivalent, by Part $(5)$ of Lemma \ref{projcod}, to the condition
\[
\label{codequiv5}
-\bfq\in
\begin{cases}
\conv\{\bfzero, B(\bfv)(-\bfp-\bfq)\}, & \mbox{if } \langle -\bfp-\bfq, (1;\frac{\bar\bfv}{\|\bar\bfv\|}) \rangle \ge 0, \\
 \{\bfzero, B(\bfv)(-\bfp-\bfq)\}, & \mbox{ otherwise},
\end{cases}
\quad
\forall \, \bfp,\bfq \in\Re^{1+r},
\]
where the definition of $B(\bfv)$ comes from \eqref{defab}.
Note that \eqref{codequiv5} holds if and only if
\[
\label{pcondition5}
\bfq=\frac{\varrho}{2}
\begin{pmatrix}
1
\\
\frac{\bar{\bfv}}{\|\bar{\bfv}\|}
\end{pmatrix}
\begin{pmatrix}
1, \frac{\bar{\bfv}^\top}{\|\bar{\bfv}\|}
\end{pmatrix}
{(\bfp+\bfq)}
\quad
\mbox{with}
\quad
\varrho\in
\begin{cases}
[0,1], & \mbox{if } -\langle \bfp , (1;\frac{\bar\bfv}{\|\bar\bfv\|}) \rangle
\ge
\langle \bfq , (1;\frac{\bar\bfv}{\|\bar\bfv\|}) \rangle,
\\
\{0,1\}, & \mbox{ otherwise}.
\end{cases}
\]
Therefore, one should have
$\bfq=\tau (1 ; \frac{\bar \bfv}{\|\bar\bfv\|})$ for some $\tau\in\Re$,
and
\[
\label{check5}
\begin{array}{ll}
(1-\varrho)
\langle (1; \frac{\bar{\bfv}}{\|\bar{\bfv}\|}
),\bfq\rangle
=\varrho
\langle (1; \frac{\bar{\bfv}}{\|\bar{\bfv}\|}),
{\bfp}\rangle.
\end{array}
\]
Consequently, one can equivalently reformulate \eqref{pcondition5} to the condition that
\[
\label{pcondition52}
(1-\varrho)\tau
\begin{pmatrix}
1
\\
\frac{\bar{\bfv}}{\|\bar{\bfv}\|}
\end{pmatrix}
=
\frac{\varrho}{2}
\begin{pmatrix}
1
\\
\frac{\bar{\bfv}}{\|\bar{\bfv}\|}
\end{pmatrix}
\begin{pmatrix}
1, \frac{\bar{\bfv}^\top}{\|\bar{\bfv}\|}
\end{pmatrix}
{\bfp}
\quad
\mbox{with}
\quad
\varrho\in
\begin{cases}
[0,1], & \mbox{if } -\langle \bfp , (1;\frac{\bar\bfv}{\|\bar\bfv\|}) \rangle
\ge
2\tau,
\\
\{0,1\}, & \mbox{ otherwise}.
\end{cases}
\]
\begin{itemize}
\item[{\bf -}]
If  $\tau=0$, one can take $\varrho=0$ such that \eqref{pcondition52} holds for any $\bfp\in\Re^{1+r}$.
Thus, $\cD^*\cN_{\cQ}(\bfu,\bfv)(\bfzero)=\Re^{1+r}$.

\item[{\bf -}]
If $\tau>0$,  one can not take $\varrho=0$, or else \eqref{check5} fails to hold.
When $\varrho\in(0,1)$, by \eqref{check5} one should have
$\langle (1;\frac{\bar\bfv}{\|\bar\bfv\|}), \bfp \rangle >0$,
but one can not find a vector $\bfp\in\Re^{1+r}$ such that
$-\langle \bfp , (1;\frac{\bar\bfv}{\|\bar\bfv\|}) \rangle\ge 2\tau$.
Therefore, one can only take $\varrho=1$, so that $\langle (1; \frac{\bar{\bfv}}{\|\bar{\bfv}\|}),
{\bfp}\rangle=0$ by \eqref{check5}.
In this case, it is easy to see that \eqref{pcondition52} holds.
Thus,
$\cD^*\cN_{\cQ}(\bfu,\bfv)(\bfq )=\{\bfp\mid \langle \bfp,\bfq \rangle=0\}$.

\item[{\bf -}]
If $\tau <0$, one has $\varrho\neq 0$ by the equality in \eqref{pcondition52}.
When $\varrho=1$,  \eqref{pcondition52} holds if and only if  $\langle (1;\frac{\bar\bfv}{\|\bar\bfv\|}), \bfp\rangle=0$.
When $\varrho\in(0,1)$, one has
$
-\langle (1;\frac{\bar\bfv}{\|\bar\bfv\|}), \bfp\rangle
=-\frac{1-\varrho}{\varrho}\langle (1;\frac{\bar\bfv}{\|\bar\bfv\|}),\bfq\rangle
=\frac{-2\tau(1-\varrho)}{\varrho}>2\tau$ if \eqref{check5} holds.
Thus, \eqref{pcondition52} holds if and only if there exists $\varrho\in(0,1)$ such that $\frac{\varrho}{2(1-\varrho)\tau}
\begin{pmatrix}
1, \frac{\bar{\bfv}^\top}{\|\bar{\bfv}\|}
\end{pmatrix}
{\bfp}=1$. Since $\tau<0$, we know that \eqref{pcondition52} holds if and only if
$\langle (1;\frac{\bar\bfv}{\|\bar\bfv\|}),\bfp\rangle < 0$.
Consequently, one has in this case that
$\cD^*\cN_{\cQ}(\bfu,\bfv)(\bfq)=\{\bfp\mid \langle \bfp, \bfq\rangle\ge 0\}$.
\end{itemize}

{\bf (6a)}
Since $\bfu=\bfv=\bfzero$,
by \eqref{partialbzero} and Part $(6)$ of Lemma \ref{projcod} one has
$\bfzero\in {\partial}_{B}\Pi_{\cQ}(\bfu+\bfv)\bfr\subseteq\cD^*\Pi_{\cQ}(\bfu+\bfv)(\bfr)$ for all $\bfr\in\Re^{1+r}$.
Then by Lemma \ref{codchange} one has
$\cD^*\cN_{\cQ}(\bfu, \bfv)(\bfzero)=\Re^{1+r}$.

\smallskip
{\bf (6b)}
Since $\bfu=\bfv=\bfzero$,
by Part (6) of Lemma \ref{projcod} one has
$
\big\{\cQ\cap (\bfr-\cQ)\big\}
\subseteq
\cD^{\ast}\Pi_{\cQ}(\bfu+\bfv)(\bfr)$ for all $\bfr\in\Re^{1+r}$.
Therefore, for any $\bfq\in-\cQ$ and $\bfp\in-\cQ$, by letting $\bfr=-\bfp-\bfq$
one has $-\bfq =\bfr-(-\bfp)\in (\bfr-\cQ)$.
Consequently,
$\bfp\in\cD^*\cN_{\cQ}(\bfu,\bfv)(\bfq)$
for any  $\bfq\in-\cQ$ and $\bfp\in-\cQ$.

\smallskip
{\bf (6c)}
Since $\bfu=\bfv=\bfzero$,
by Part (6) of Lemma \ref{projcod} one has for any $\bfr\in\Re^{1+r}$, it holds
$\bfs\in \cD^{\ast}\Pi_{\cQ}(\bfu+\bfv)(\bfr)$, or equivalently (by Lemma \ref{codchange}),
\[
\label{coderiv6}
\bfs-\bfr \in\cD^*\cN_{\cQ}(\bfu,\bfv)(-\bfs),
\quad \forall \, \bfr\in\Re^{1+r},
\]
if $s\in\conv\{\bfr, A\bfr\}$,
where $A=
I_{1+r}-\frac{1}{2}  \begin{pmatrix}  1 \\ -\bfw \end{pmatrix}(1,-\bfw^\top)$
for some $\bfw \in\Re^{r}$ such that $\|\bfw\|=1$ and $\langle\bfr, (1; -\bfw) \rangle \geq 0$.
Since $\bfp\in -\boundary\cQ\setminus\{\bfzero\}$ and $\bfq\in\Re^{1+r}$ with $\langle \bfp,\bfq\rangle \ge 0$,
by letting $\bfs:=-\bfq$ and $\bfr:=-\bfp-\bfq$, one can see that $\bfp=\bfs-\bfr$.
Meanwhile, by letting  $\bfw:=\frac{\bar\bfp}{\|\bar\bfp\|}$,
one can get that
\[
\label{coderi6}
\begin{array}{ll}
\langle \bfr,(1;-\bfw) \rangle
=-\langle\bfp+\bfq, (1;-\bfw) \rangle
=-\dot p +\langle\bar\bfp,\bfw\rangle-\dot q +\langle \bar\bfq,\bfw \rangle
\\
=-\dot p+\langle\bar\bfp,\frac{\bar\bfp}{\|\bar\bfp\|}\rangle
-\dot q +\langle \bar\bfq,\frac{\bar\bfp}{\|\bar\bfp\|} \rangle
=-\dot p+\|\bar\bfp\|
+\frac{1}{\|\bar\bfp\|}(\dot q\dot p +\langle \bar\bfq,\bar\bfp \rangle)
=2\|\bar\bfp\| +\frac{\langle \bfp,\bfq  \rangle}{\|\bar\bfp\|} >0.
\end{array}
\]
In this case, one can further  define  $\varrho:=\frac{\|\bfp\|^2}{\langle\bfp,\bfq\rangle+\|\bfp\|^2}\in(0,1]$ and obtain that
$$
\begin{array}{ll}
(1-\varrho)\bfr+\varrho A\bfr=
(1-\varrho)(-\bfp-\bfq)
+\varrho \left[ I_{1+r}-\frac{1}{2}  \begin{pmatrix}  1 \\ -\bfw \end{pmatrix}(1,-\bfw^\top)  \right](-\bfp-\bfq)
\\[2mm]
= -\bfq-\bfp
+\frac{\varrho}{2} \begin{pmatrix}  1 \\ -\bfw \end{pmatrix}(1,-\bfw^\top) \bfq
+\frac{\varrho}{2} \begin{pmatrix}  1 \\ -\bfw \end{pmatrix}(1,-\bfw^\top) \bfp
\\[2mm]
=-\bfq-\bfp
+\frac{\varrho}{2} (\dot q -\frac{\bar\bfp^\top\bar\bfq}{\|\bar\bfp\|})\begin{pmatrix}  1 \\ -\bfw \end{pmatrix}
+\frac{\varrho}{2} (\dot p -\frac{\bar\bfp^\top \bar\bfp}{\|\bar\bfp\|})
\begin{pmatrix}  1 \\ -\bfw \end{pmatrix}
\\[2mm]
=-\bfq-\bfp
-\varrho\left(\frac{\langle\bfp,\bfq\rangle}{2\|\bar\bfp\|}+\|\bar\bfp\|\right)
\begin{pmatrix}  1 \\-\frac{\bar\bfp}{\|\bar\bfp\|} \end{pmatrix}
=-\bfq-\bfp+\varrho\frac{\langle\bfp,\bfq\rangle+\|\bfp\|^2}{\|\bfp\|^2}\begin{pmatrix}\dot p \\ \bar\bfp\end{pmatrix}=-\bfq,
\end{array}
$$
which, together with \eqref{coderi6} and the fact that $\varrho\in(0,1]$,
implies $\bfp\in\cD^*\cN_{\cQ}(\bfu,\bfv)(\bfq)$
from \eqref{coderiv6}.

\smallskip
{\bf (6d)}
Since $\bfu=\bfv=\bfzero$,
by Part (6) of Lemma \ref{projcod} one has for any $\bfr\in\Re^{1+r}$, it holds that
$\bfs\in \cD^{\ast}\Pi_{\cQ}(\bfu+\bfv)(\bfr)$, or equivalently (by Lemma \ref{codchange}),
\[
\label{coder6666}
\bfs-\bfr \in\cD^*\cN_{\cQ}(\bfu,\bfv)(-\bfs),
\quad \forall \, \bfr\in\Re^{1+r},
\]
if $s\in\conv\{\bfzero, B\bfr\}$,
where $B=
\frac{1}{2}  \begin{pmatrix}  1 \\ \bfw \end{pmatrix}(1,\bfw^\top)
$ for some $\bfw \in\Re^{r}$ such that $\|\bfw\|=1$ and $\langle \bfr, (1; \bfw)\rangle \geq 0$.
Let $\bfp\in\Re^{1+r}$ and $\bfq\in -\boundary\cQ\setminus\{\bfzero\}$ such that $\langle \bfp,\bfq\rangle \ge 0$.
By letting $\bfs:=-\bfq$ and $\bfr:=-\bfp-\bfq$, one can see that $\bfp=\bfs-\bfr$.
Meanwhile, by letting  $\bfw:=-\frac{\bar\bfq}{\|\bar\bfq\|}$,
one can see that
\[
\label{coder666}
\begin{array}{ll}
\langle \bfr,(1;\bfw) \rangle
=-\langle\bfp+\bfq, (1;\bfw) \rangle
=-\dot p -\langle\bar\bfp,\bfw\rangle-\dot q -\langle \bar\bfq,\bfw \rangle
\\
=-\dot p -\langle\bar\bfp,-\frac{\bar\bfq}{\|\bar\bfq\|}\rangle-\dot q -\langle \bar\bfq,-\frac{\bar\bfq}{\|\bar\bfq\|} \rangle
=-\dot p +\langle\bar\bfp,\frac{\bar\bfq}{\|\bar\bfq\|}\rangle-\dot q +\|\bar\bfq\|
=\frac{\langle \bfq, \bfp\rangle}{\|\bar\bfq\|} +2\|\bar\bfq\|>0.
\end{array}
\]
In this case, one can further  define  $\varrho:=\frac{\|\bfq\|^2}{\langle\bfp,\bfq\rangle+\|\bfq\|^2}\in(0,1]$ and obtain that
$$
\begin{array}{ll}
\varrho B\bfr=
\frac{\varrho}{2} \begin{pmatrix}  1 \\ \bfw \end{pmatrix}(1,\bfw^\top) (-\bfp-\bfq)
= -\frac{\varrho}{2}  \begin{pmatrix}  1 \\ \bfw \end{pmatrix}(1,\bfw^\top) \bfq
-\frac{\varrho}{2}  \begin{pmatrix}  1 \\ \bfw \end{pmatrix}(1,\bfw^\top) \bfp
\\[4mm]
=-\frac{\varrho}{2}(\dot p-\frac{\bar\bfq^\top\bar\bfp}{\|\bar\bfq\|})
\begin{pmatrix}1\\ \bfw  \end{pmatrix}
-\frac{\varrho}{2} (\dot q -\frac{\bar\bfq^\top\bar\bfq}{\|\bar\bfq\|})
\begin{pmatrix} 1\\ \bfw\end{pmatrix}
=\varrho \frac{\langle \bfp,\bfq \rangle+\|\bfq\|^2}{2\|\bar\bfq\|}
\begin{pmatrix} 1\\-\frac{\bar\bfq}{\|\bar\bfq\|}\end{pmatrix}
=-\bfq.
\end{array}
$$
This, together with \eqref{coder6666}, \eqref{coder666}, and the fact that $\varrho\in(0,1]$,
implies  that
$\bfp\in\cD^*\cN_{\cQ}(\bfu,\bfv)(\bfq)$ holds.

\smallskip
{\bf (6e)}
Since $\bfu=\bfv=\bfzero$,
by Part (6) of Lemma \ref{projcod} and \eqref{partialbzero} one has for any $\bfr\in\Re^{1+r}$, it holds $\bfr\in \cD^{\ast}\Pi_{\cQ}(\bfu+\bfv)(\bfr)$.
This is equivalent, by Lemma \ref{codchange}, to
$\bfzero \in\cD^*\cN_{\cQ}(\bfu,\bfv)(-\bfr)$ for all $\bfr\in\Re^{1+r}$.
This completes the proof of the lemma.
\end{proof}

\section{Equivalence of the Aubin property and the strong regularity}
\label{sec:main}
This section establishes the main result of this paper.
The main step is to show that the strong second-order sufficient condition in Definition \ref{defsoccs} holds.  
We will reformulate the Mordukhovich criterion for the Aubin property of $\rS_{\KKT}$ to the necessary condition in Corollary \ref{aubinss}. 
Moreover, by specifying a basis for the affine hull of the critical cone 
in Corollary \ref{aubins2}, we can put our discussion in a space with a lower dimension (Lemma \ref{lemsosc}). 
Finally, by extensively utilizing Corollary \ref{aubinss} and Lemma \ref{lemtech}, we obtain the strong second-order sufficient condition by Propositions \ref{propreduce} and \ref{propmain}, in which successive dimension reduction is involved. 

Let  $\bfx^*$ be a locally optimal solution to \eqref{nlsocp} with
$\bfy^*=(\bflambda^*,\bfmu^*)$ being the associated multiplier.
Note that any index $j\in \{1,\ldots, J\}$ belongs to one and only one of the following $6$ sets:
\[
\label{jdecompose}
\left\{\begin{array}{ll}
J_1:=\left\{ j \mid g^j(\bfx^*) =\bfzero, (\bfmu^*)^j \in \interior\cQ_j \right\},
\\[1mm]
J_2:=\left\{ j \mid g^j(\bfx^*)\in \interior\cQ_j, (\bfmu^*)^j =\bfzero \right\},
\\[1mm]
J_3:=\left\{ j \mid g^j(\bfx^*), (\bfmu^*)^j \in \boundary{\cQ_j}\setminus\{\bfzero\} \right\},
\\[1mm]
J_4:=\left\{ j \mid g^j(\bfx^*)=\bfzero, (\bfmu^*)^j \in \boundary{\cQ_j}\setminus\{\bfzero\} \right\},
\\[1mm]
J_5:=\left\{ j \mid  g^j(\bfx^*)\in \boundary{\cQ_j}\setminus\{\bfzero\}, (\bfmu^*)^j=\bfzero \right\},
\\[1mm]
J_6:=\left\{ j \mid g^j(\bfx^*) =\bfzero, (\bfmu^*)^j =\bfzero \right\}.
\end{array}\right.
\]
For simplicity, $J_1 \cup J_2$ is abbreviated to $J_{1,2}$,   $J_1 \cup J_2 \cup J_3$ is abbreviated to $J_{1,2,3}$, etc. For an index set $I\subseteq \{1,2,3,4,5,6\}$, $|J_I|$ denotes the cardinality of the set $J_I$.
Note that when $r_j=0$, $j$ must belong to $J_{1,2,6}$.
Recall from \cite[Lemma 25]{bonnans2005}  that, for each $j=1,\ldots, J$, the tangent cone of $\cQ_j$ at $g^j(\bfx^*)$ is given by
\[
\label{tancone}
\cT_{\cQ_j}(g^j(\bfx^*))=
\begin{cases}
\Re^{1+r_j},  & \mbox{ if } j\in J_2,
\\
\cQ_j,  & \mbox{ if } j\in J_{1,4,6},
\\
\{\bfv\in\Re^{1+r_j} \mid  \langle \bar\bfv,\bar g^j(\bfx^*)\rangle-\dot{v}\dot g^j(\bfx^*) \le  0\},
& \mbox{ if } j\in J_{3,5}.
\end{cases}
\]
Furthermore, by \cite[Corollary 26]{bonnans2005} the critical cone $\cC(\bfx^*)$ can be given by
\[
\label{criticalcone2}
\cC(\bfx^*)
=\left\{
\bfd\in\Re^n\middle|
\begin{array}{ll}
\cJ h(\bfx^*)\bfd=\bfzero,
\\
\cJ g^j(\bfx^*)\bfd=\bfzero, &
\forall\, j\in J_1,
\\
\cJ g^j(\bfx^*)\bfd\in\cT_{\cQ_j}(g^j(\bfx^*)), &
\forall\, j \in  J_{5,6},
\\
\cJ g^j(\bfx^*)\bfd \in\Re_+ (({\dot\mu^*})^j;-({\bar\bfmu^*})^j),
&\forall\, j\in J_4,
\\
\langle \cJ g^j(\bfx^*)\bfd, (\bfmu^*)^j\rangle =0,
&\forall\,j\in J_3
\end{array}
\right\}.
\]
Moreover, according to \cite[eq. (47)]{bonnans2005}, the affine hull of the critical cone $\cC(\bfx^*)$, i.e. the smallest subspace that contains the critical cone, can be formulated as
\[
\label{affcricdef2}
\aff(\cC(\bfx^*))
=\left\{
\bfd\in\Re^n\middle|
\begin{array}{ll}
\cJ h(\bfx^*)\bfd=\bfzero,
\\
\cJ g^j(\bfx^*)\bfd=\bfzero, &\forall\,
j\in J_1,
\\
\cJ g^j(\bfx^*)\bfd\in\Re (({\dot\mu^*})^j;-({\bar\bfmu^*})^j), &\forall\, j\in J_4,
\\
\langle \cJ g^j(\bfx^*)\bfd, (\bfmu^*)^j\rangle =0,  &\forall\, j\in J_3
\end{array}
\right\}.
\]
In the following definition,  the second-order and strong second-order sufficient conditions for   \eqref{nlsocp} come from \cite[Theorems 29 \& 30]{bonnans2005}.
\begin{definition}
\label{defsoccs}
Let $\bfx^*$ be a stationary point of the nonlinear SOCP \eqref{nlsocp}.
We say that the second-order sufficient condition holds at $\bfx^*$ if there exists an associated multiplier $(\bflambda^*,\bfmu^*)$ such that
\[
\label{sosc}
\langle \bfd,\left(\nabla^2_{\bfx\bfx}\cL (\bfx^*,\bflambda^*,\bfmu^*)
+\cH(\bfx^*,\bfmu^*)\right)\bfd\rangle
>0,
\quad
\forall\, \bfd\in \cC(\bfx^*)\setminus\{\bfzero\},
\]
where $\nabla_{\bfx\bfx}^2\cL$ is the Hessian of $\cL$ with respect to $\bfx$,
and $\cH(\bfx^*,\bfmu^*):=\sum_{j=1}^J \cH_j(\bfx^*,\bfmu^*)$ with
$$
\cH_j(\bfx^*,\bfmu^*):=
\left\{
\begin{array}{ll}
\ds-\frac{(\dot\mu^*)^j}{\dot{g}^j(\bfx^*)}\nabla g^j(\bfx^*)
\begin{pmatrix} 1& \bfzero^\top\\ \bfzero& -I_{r_j} \end{pmatrix}
\cJ g^j(\bfx^*),
&\mbox{ if } g^j(\bfx^*)\in\boundary\cQ_j\setminus\{\bfzero\},
\\[3mm]
O_{n},&\mbox{ otherwise}.
\end{array}
\right.
$$
We say that the strong second-order sufficient condition holds at $\bfx^*$ if there exists an associated multiplier $(\bflambda^*,\bfmu^*)$ such that
\[
\label{ssosc}
\left< \bfd,\left(\nabla^2_{\bfx\bfx}\cL(\bfx^*,\bflambda^*,\bfmu^*)
+\cH(\bfx^*,\bfmu^*)\right)\bfd\right>
>0,
\quad
\forall\, \bfd\in \aff(\cC(\bfx^*))\setminus\{\bfzero\}.
\]
Similarly, we say that the strong second-order necessary condition holds at $\bfx^*$ if there exists an associated multiplier $(\bflambda^*,\bfmu^*)$ such that
\[
\label{ssonc}
\left< \bfd,\left(\nabla^2_{\bfx\bfx}\cL(\bfx^*,\bflambda^*,\bfmu^*)
+\cH(\bfx^*,\bfmu^*)\right)\bfd\right>
\ge 0,
\quad
\forall\, \bfd\in \aff(\cC(\bfx^*)).
\]
\end{definition} 
\begin{remark}
\label{rmkvs}
We will show that 
$\left(\nabla^2_{\bfx\bfx}\cL(\bfx^*,\bflambda^*,\bfmu^*)
+\cH(\bfx^*,\bfmu^*)\right)\bfd\neq\bfzero$ for all $\bfd\in \aff(\cC(\bfx^*))\setminus\{\bfzero\}$
(Corollary \ref{aubinss}) if the Aubin property of $\rS_{\KKT}$ holds. 
Consequently, we devote our main effort to proving that the strong second-order necessary condition \eqref{ssonc} holds to obtain Proposition \ref{propmain}. 
In \cite[Theorem 4.2]{benko24}, by assuming that the variational sufficiency for local optimality \cite{rock2023} holds, the authors established the equivalence between the Aubin property of $\rS_{\KKT}$ and the strong regularity of \eqref{ge}   for stationary points.  
It can be observed from \cite[Theorem 3 \& Example 1]{rock2023} that for the conventional nonlinear programming, i.e., a special case of \eqref{op} with $r_1=\cdots=r_J=0$, the variational sufficiency for local optimality is a sufficient condition for the strong second-order necessary condition \eqref{ssonc}. 
Therefore, making such an assumption is very close to directly assuming the strong second-order sufficient condition \eqref{ssosc}. 
In contrast, we will derive the strong second-order sufficient condition solely from the Aubin property of $\rS_{\KKT}$ without assuming the variational sufficiency as in  \cite{rock2023}. 
\end{remark}

\subsection{Implications of the Aubin property}
\label{sec:aubinimply}
In this subsection,
we introduce a reduction approach to the
Aubin property of $\rS_{\rm GE}$ \eqref{sge}
characterized by the Mordukhovich criterion,
for recasting it into a more accessible form.
For convenience, we make the following assumption.
\begin{assumption}
\label{assblanket}
The point $\bfx^*$ is a nondegenerate locally optimal solution
{\rm(}i.e., \eqref{cnd} holds at $\bfx^*${\rm)}
to   \eqref{nlsocp} with $\bfy^*=(\bflambda^*,\bfmu^*)$ being the associated multiplier,
and the index set $\{1,\ldots,J\}$ are classified by \eqref{jdecompose}.
\end{assumption}

Thanks to Lemma \ref{lemma:aubin}, we can get the following coderivative-based characterization for the Aubin property of ${\rS}_{\rm GE}$ \eqref{sge}.
\begin{lemma}
\label{aubproof2}
Under Assumption \ref{assblanket}, the mapping ${\rS}_{\rm GE}$ in \eqref{sge} has the Aubin property at the origin for $\bfx^*$
if and only if for all $\bfd\in\aff(\cC(\bfx^*))\backslash\{\bfzero\}$ it holds that
\[
\label{aubinreduce1}
\begin{array}{lr}
\ds
\nabla_{\bfx\bfx}^2\cL(\bfx^*,\bflambda^*,\bfmu^*)\bfd
\notin \range \nabla h(\bfx^*)
\\[1mm]
\qquad\qquad
\ds
-\left\{
\sum_{j=1}^J\nabla g^j(\bfx^*)\bfp^j
\ \Big\vert\
\bfp^j\in D^*\cN_{\cQ_j}(g^j(\bfx^*),-(\bfmu^*)^j)(\cJ g^j(\bfx^*)\bfd)
\right\}.
\end{array}
\]
\end{lemma}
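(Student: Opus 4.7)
The plan is to combine three preliminary results: Lemma~\ref{lemma:aubinorigion} (the Mordukhovich criterion for the Aubin property of $\rS_{\rm GE}$), Lemma~\ref{lemma:aubin} (the chain rule for $\cD^*\rS_{\rm GE}(\bfzero,\bfx^*)(\bfzero)$ under constraint nondegeneracy), and Lemma~\ref{codersecond} (the explicit coderivative formulas for $\cN_{\cQ}$). Writing \eqref{nlsocp} in the conic form \eqref{op} with $G=(h,g^1,\ldots,g^J)$ and $\cK=\{\bfzero\}^m\times\cQ_1\times\cdots\times\cQ_J$, these combine to recast the Aubin property of $\rS_{\rm GE}$ as follows: $\bfd=\bfzero$ must be the only $\bfd\in\Re^n$ for which there exist $\bflambda\in\Re^m$ and $\bfp^j\in\cD^*\cN_{\cQ_j}(g^j(\bfx^*),-(\bfmu^*)^j)(\cJ g^j(\bfx^*)\bfd)$, $j=1,\ldots,J$, together with $\cJ h(\bfx^*)\bfd=\bfzero$, satisfying
\[
\bfzero = \nabla_{\bfx\bfx}^2\cL(\bfx^*,\bflambda^*,\bfmu^*)\bfd + \nabla h(\bfx^*)\bflambda + \sum_{j=1}^J \nabla g^j(\bfx^*)\bfp^j.
\]
The decomposition of $\cD^*\cN_\cK$ into one block per factor is legitimate since $\cK$ is a Cartesian product of closed convex cones, and the equality block produces the constraint $\cJ h(\bfx^*)\bfd=\bfzero$ together with a free $\bflambda\in\Re^m$ via \eqref{coderzero}.

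The core step is to read off the constraints that nonempty coderivative sets impose on $\bfd$. I would apply Lemma~\ref{codersecond} according to the six cases of \eqref{jdecompose}: Part~(1) for $j\in J_1$ forces $\cJ g^j(\bfx^*)\bfd=\bfzero$; Part~(2) for $j\in J_2$ forces $\bfp^j=\bfzero$ with no constraint on $\bfd$; Part~(3) for $j\in J_3$ forces $\langle \cJ g^j(\bfx^*)\bfd,(\bfmu^*)^j\rangle=0$; Part~(5) for $j\in J_4$ (with $\bfu=\bfzero$ and $\bfv=-(\bfmu^*)^j\in-\boundary\cQ_j\setminus\{\bfzero\}$) forces $\cJ g^j(\bfx^*)\bfd\in\Re((\dot\mu^*)^j;-(\bar\bfmu^*)^j)$; while Parts~(4) and (6) impose no additional constraint on $\bfd$ for $j\in J_5$ and $j\in J_6$.

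Collecting the constraints on $\bfd$ reproduces exactly the description of $\aff(\cC(\bfx^*))$ in \eqref{affcricdef2}. For $\bfd\notin\aff(\cC(\bfx^*))$ at least one coderivative set is empty, so such $\bfd$ cannot witness $\cD^*\rS_{\rm GE}(\bfzero,\bfx^*)(\bfzero)\neq\{\bfzero\}$; for $\bfd\in\aff(\cC(\bfx^*))$ the displayed condition becomes $\nabla_{\bfx\bfx}^2\cL(\bfx^*,\bflambda^*,\bfmu^*)\bfd=-\nabla h(\bfx^*)\bflambda-\sum_j\nabla g^j(\bfx^*)\bfp^j$, whose negation, after absorbing the free sign of $\bflambda$ into $\range\nabla h(\bfx^*)$, is precisely \eqref{aubinreduce1}. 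The only mildly delicate bookkeeping is in case $j\in J_4$: one must recognize that the ray produced by Part~(5) of Lemma~\ref{codersecond} with $\bfv=-(\bfmu^*)^j$ coincides with $\Re((\dot\mu^*)^j;-(\bar\bfmu^*)^j)$ via the boundary identity $(\dot\mu^*)^j=\|(\bar\bfmu^*)^j\|$. Otherwise the argument is a case-by-case reading of the coderivative tables assembled in Section~\ref{sec:corderivative}, with no genuinely hard step beyond accurate accounting.
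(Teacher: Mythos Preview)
Your proposal is correct and follows essentially the same route as the paper: you invoke Lemmas~\ref{lemma:aubinorigion} and~\ref{lemma:aubin} to obtain the Mordukhovich criterion, split $\cD^*\cN_\cK$ blockwise, use \eqref{coderzero} for the equality constraint to extract $\cJ h(\bfx^*)\bfd=\bfzero$ and the free $\bflambda$, and then read off from Lemma~\ref{codersecond} (Parts~(1), (3), (5)) exactly the constraints in \eqref{affcricdef2} that force $\bfd\in\aff(\cC(\bfx^*))$. The paper's proof is slightly terser in that it only explicitly treats the three index classes $J_1,J_3,J_4$ whose coderivatives can be empty, but your more exhaustive case listing and the observation about the $J_4$ ray via $(\dot\mu^*)^j=\|(\bar\bfmu^*)^j\|$ match the paper's argument.
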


\begin{proof}
Since Assumption \ref{assblanket} holds, according to
Lemmas \ref{lemma:aubinorigion}
and \ref{lemma:aubin} we know that the
mapping $\rS_{\rm GE}$ in \eqref{sge} has the Aubin property at the origin for $\bfx^*$
if and only if for all $\bfd\, \in\Re^n\setminus\{\bfzero\}$,
\[
\label{aubinoriginal}
\begin{array}{lll}
\nabla_{\bfx\bfx}^2\cL(\bfx^*,\bflambda^*,\bfmu^*)\bfd
&
\notin
&-
\nabla h(\bfx^*)\cD^*\cN_{\{\bfzero\}}(h(\bfx^*),-\bflambda^*)(\cJ h(\bfx^*)\bfd)
\\
&&-\sum_{j=1}^J\nabla g^j(\bfx^*)\cD^*\cN_{\cQ_j}(g^j(\bfx^*),-(\bfmu^*)^j)(\cJ g^j(\bfx^*)\bfd).
\end{array}
\]
From \eqref{coderzero} we know that
$
D^*\cN_{\{\bfzero\}}(h(\bfx^*),-\bflambda^*)(\cJ h(\bfx^*)\bfd)
=\Re^m$, if $\cJ h(\bfx^*)\bfd=\bfzero$, or else it is an empty set.
Therefore, \eqref{aubinoriginal} holds whenever $\cJ h(\bfx^*)\bfd\neq \bfzero$.
Next, if $\bfzero\neq \bfd \notin\aff(\cC(\bfx^*))$ satisfies $\cJ h(\bfx^*)\bfd=\bfzero$,
according to \eqref{affcricdef2}, one can always choose an index $j$ such that at least one of the following three conditions holds:
$$
\begin{cases}
j\in J_1 \quad\mbox{and}\quad \cJ g^j(\bfx^*)\bfd\neq\bfzero,
\\
j\in J_4 \quad\mbox{and}\quad
\cJ g^j(\bfx^*)\bfd\notin \Re (({\dot\mu^*})^j;-({\bar\bfmu^*})^j),
\\
j\in J_3 \quad\mbox{and}\quad
\langle \cJ g^j(\bfx^*)\bfd, (\bfmu^*)^j\rangle \neq 0.
\end{cases}
$$
\begin{description}
\item[-]
If $j\in J_1$  and $\cJ g^j(\bfx^*)\bfd\neq\bfzero$,
by \eqref{jdecompose} one has $g^j(\bfx^*)=\bfzero$ and $(\bfmu^*)^j\in\interior\cQ_j$.
Then one can get
$\cD^*\cN_{\cQ_j}(g^j(\bfx^*),-(\bfmu^*)^j)(\cJ g^j(\bfx^*)\bfd)=\emptyset$
by Part (1) of Lemma \ref{codersecond}
(or \eqref{coderzerofirst} if $r_j=0$).

\item[-]
If $j\in J_4$  and
$\cJ g^j(\bfx^*)\bfd\notin \Re (({\dot\mu^*})^j;-(\bar{\bfmu}^*)^j )$ holds,
one can get from \eqref{jdecompose} that
$g^j(\bfx^*)=\bfzero$ and
$(\bfmu^*)^j\in\boundary\cQ_j\setminus\{\bfzero\}$,
so that $(\dot\mu^*)^j=\|(\bar\bfmu^*)^j\|$.
Then by Part $(5)$ of Lemma \ref{codersecond} one has
$\cD^*\cN_{\cQ_j}(g^j(\bfx^*),-(\bfmu^*)^j)(\cJ g^j(\bfx^*)\bfd)=\emptyset$.

\item[-]
If $j\in J_3$ and $\langle \cJ g^j(\bfx^*)\bfd, (\bfmu^*)^j\rangle \neq 0$,
one has from \eqref{jdecompose} that $(\bfmu^*)^j,g^j(\bfx^*) \in\boundary\cQ_j\setminus\{\bfzero\}$.
Then one has
$\cD^*\cN_{\cQ_j}(g^j(\bfx^*),-(\bfmu^*)^j)(\cJ g^j(\bfx^*)\bfd)=\emptyset$ by Part (3) of Lemma \ref{codersecond}.
\end{description}
In summary, according to the above discussions, we know that
\eqref{aubinoriginal} holds automatically whenever
$\bfd\notin\aff(\cC(\bfx^*))\backslash\{\bfzero\}$.
Therefore, \eqref{aubinreduce1} is equivalent to \eqref{aubinoriginal}.
\end{proof}

The following lemma, as an application of the reduction approach developed in \cite[Section 3.4.4]{B&S2000} for constraint systems, provides a convenient tool for characterizing the affine hull of the critical cone \eqref{affcricdef2} via the null space of a given matrix.

\begin{lemma}
\label{lem:aff}
Suppose that Assumption \ref{assblanket} holds.
Define the matrix
\[
\label{defXi}
\Xi:=\left(
\cJ h(\bfx^*);\,
\{\cJ g^{j}(\bfx^*)\}_{j\in J_1}; \,
\{[(\bfmu^*)^j]^\top\cJ g^j (\bfx^*) \}_{j\in J_3}; \,
\{S_j^\top\cJ g^j(\bfx^*)\}_{j\in J_4}
\right),
\]
where the columns of each matrix $S_j\in\Re^{(1+r_j)\times r_j}$ span a basis of $\{((\dot\mu^*)^j;-(\bar\bfmu^*)^j)\}^\perp$.
Then, one has $\aff(\cC(\bfx^*))=\ker \Xi$.
Moreover, $\Xi$ has full row rank.
\end{lemma}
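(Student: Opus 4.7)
The plan is to treat the two assertions separately. For the identification $\aff(\cC(\bfx^*)) = \ker \Xi$, I would read off \eqref{affcricdef2} row-block by row-block. The conditions $\cJ h(\bfx^*)\bfd=\bfzero$ and $\cJ g^j(\bfx^*)\bfd=\bfzero$ ($j\in J_1$) are already kernel conditions for the top row-blocks, while $\langle \cJ g^j(\bfx^*)\bfd, (\bfmu^*)^j\rangle=0$ for $j\in J_3$ is exactly the single scalar equation defined by the row $[(\bfmu^*)^j]^\top \cJ g^j(\bfx^*)$. The only translation step concerns $j\in J_4$: the inclusion $\cJ g^j(\bfx^*)\bfd\in\Re ((\dot\mu^*)^j;-(\bar\bfmu^*)^j)$ says that $\cJ g^j(\bfx^*)\bfd$ lies on the line whose orthogonal complement is spanned by the columns of $S_j$, hence is equivalent to $S_j^\top\cJ g^j(\bfx^*)\bfd=\bfzero$.

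For the full-row-rank assertion, the strategy is to dualize the constraint nondegeneracy \eqref{cnd} and match the resulting condition with a null combination of the rows of $\Xi$. Writing $G=(h,g^1,\ldots,g^J)$ and $\cK=\{\bfzero\}^m\times\cQ_1\times\cdots\times\cQ_J$, condition \eqref{cnd} is equivalent to: every $\bfp=(\bfp_0,\bfp_1,\ldots,\bfp_J)$ with $\bfp_j\in\bigl(\lin\cT_{\cQ_j}(g^j(\bfx^*))\bigr)^\perp$ satisfying $\nabla h(\bfx^*)\bfp_0+\sum_j\nabla g^j(\bfx^*)\bfp_j=\bfzero$ must be zero. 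Using \eqref{tancone} and the complementarity identity $(\bfmu^*)^j=\alpha(\dot g^j(\bfx^*);-\bar g^j(\bfx^*))$ with $\alpha>0$ for $j\in J_3$, I would compute these orthogonal complements: $\{\bfzero\}$ for $j\in J_2$, all of $\Re^{1+r_j}$ for $j\in J_{1,4,6}$, and $\Re(\bfmu^*)^j$ for $j\in J_{3,5}$.

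Now suppose a linear combination of the rows of $\Xi$ vanishes, with coefficient vectors $\bflambda\in\Re^m$, $\{\bfp_j\}_{j\in J_1}$, $\{\beta_j\}_{j\in J_3}$, $\{\bfalpha_j\in\Re^{r_j}\}_{j\in J_4}$. Setting $\bfp_0:=\bflambda$, $\bfp_j:=\beta_j(\bfmu^*)^j$ for $j\in J_3$, $\bfp_j:=S_j\bfalpha_j$ for $j\in J_4$, and $\bfp_j:=\bfzero$ for $j\in J_{2,5,6}$ produces a vector in $(\lin\cT_\cK(G(\bfx^*)))^\perp$ annihilated by $\nabla G(\bfx^*)$. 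Nondegeneracy forces $\bfp=\bfzero$, whence $\bflambda=\bfzero$ and $\bfp_j=\bfzero$ for $j\in J_1$ immediately; $\beta_j=0$ follows because $(\bfmu^*)^j\neq\bfzero$; and $\bfalpha_j=\bfzero$ follows because the columns of $S_j$ form a basis and hence $S_j$ has full column rank. Since nothing in this argument is delicate beyond correctly computing the polar of each $\lin\cT_{\cQ_j}$, I do not anticipate a real obstacle — the only place that demands care is the bookkeeping for $J_3$, where the geometric fact $(\bfmu^*)^j\in\Re(\dot g^j(\bfx^*);-\bar g^j(\bfx^*))$ is what lets the row $[(\bfmu^*)^j]^\top\cJ g^j(\bfx^*)$ be slotted into the nondegeneracy framework at all.
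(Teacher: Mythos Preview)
Your proposal is correct. For the kernel identification both you and the paper simply read off \eqref{affcricdef2}. For the full-row-rank assertion, however, you and the paper take dual routes. The paper proves \emph{surjectivity} of $\Xi$ directly from the primal form \eqref{cnd}: given an arbitrary target $(\bfzeta^0,\{\bfzeta^j\}_{j\in J_1},\{\zeta^j\}_{j\in J_3},\{\hat\bfzeta^j\}_{j\in J_4})$, it manufactures a preimage $\bfd$ via the decomposition $\cJ g^j(\bfx^*)\bfd+\bfv^j=\bfz^j$ with $\bfv^j\in\lin\cT_{\cQ_j}(g^j(\bfx^*))$. You instead prove \emph{injectivity} of $\Xi^\top$, passing to the equivalent dual condition $\ker\nabla G(\bfx^*)\cap\bigl(\lin\cT_\cK(G(\bfx^*))\bigr)^\perp=\{\bfzero\}$ and embedding a null row-combination of $\Xi$ into that intersection. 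Your argument is a touch more economical since it sidesteps the explicit preimage construction; the paper's constructive route has the side benefit that the same preimage device is reused verbatim in the proof of Corollary~\ref{aubins2}. One small slip in your write-up: for $j\in J_5$ the orthogonal complement of $\lin\cT_{\cQ_j}(g^j(\bfx^*))$ is the line $\Re(\dot g^j(\bfx^*);-\bar g^j(\bfx^*))$, not $\Re(\bfmu^*)^j$ (the latter collapses to $\{\bfzero\}$ since $(\bfmu^*)^j=\bfzero$ there); this is harmless, because you take $\bfp_j=\bfzero$ for $j\in J_5$ anyway and $\bfzero$ lies in the correct complement.
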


\begin{proof}
The first conclusion comes immediately from \eqref{affcricdef2} and  \eqref{defXi}.
Next, we show that $\Xi$ is surjective.
Let $\bfzeta^0\in \Re^m$,
$\{\bfzeta^j\in \Re^{1+r_j}\}_{j\in J_1}$,
$\{\zeta^j\in \Re\}_{j\in J_3}$,
and
$\{\hat\bfzeta^j\in \Re^{r_j}\}_{j\in J_4}$
be arbitrarily given.
Define
$$
\left\{
\bfzeta^j:=\bfzero\in \Re^{1+r_j}
\right\}_{j\in J_{2,5,6}},
\left\{\bfzeta^j:=\frac{\zeta^j(\bfmu^*)^j}{\|(\bfmu^*)^j\|^2}\right\}_{j\in J_3},
\mbox{and}
\ \left\{
\bfzeta^j:=S_j(S_j^\top S_j)^{-1}\hat\bfzeta^j
\right\}_{j\in J_4}.
$$
One can see from \eqref{cnd} that there exist vectors $\bfd\in\Re^n$ and $\bfv^j\in\lin\left(\cT_{\cQ_j}\big(g^j(\bfx^*)\big)\right)$
such that
$\cJ h(\bfx^*)\bfd=\bfzeta^0$
{and}
$\cJ g^j(\bfx^*)\bfd+ \bfv^j = \bfzeta^j$ for all  $j=1,\ldots, J$, where by \eqref{tancone} the linearity space of the tangent cone takes the following form
\[
\label{lintan}
\lin(\cT_{\cQ_j}(g^j(\bfx^*)))
=
\begin{cases}
\Re^{1+r_j},  & \mbox{ if } j\in J_2,
\\
\{\bfzero\},  & \mbox{ if } j\in J_{1,4,6},
\\
\{\bfv\in\Re^{1+r_j} \mid  \langle \bar\bfv,\bar g^j(\bfx^*)\rangle-\dot{v}\dot g^j(\bfx^*) = 0\},
&\mbox{ if } j\in J_{3,5}.
\end{cases}
\]
Then, by using \eqref{lintan} one has
$\cJ h(\bfx^*)\bfd=\bfzeta^0$,
$\{\cJ g^{j}(\bfx^*)\bfd=\bfzeta^j\}_{j\in J_1}$,
$\{\cJ g^{j}(\bfx^*)\bfd+\bfv^{j}=\bfzeta^j\}_{j\in J_3}$
and $\{\cJ g^{j}(\bfx^*)\bfd=\bfzeta^j\}_{j\in J_4}$.
Moreover, for all $j\in J_3$, it holds that
$$
\langle \cJ g^j (\bfx^*)\bfd, (\bfmu^*)^j\rangle=\langle \bfzeta^j-\bfv^{j}, (\bfmu^*)^j\rangle=\langle \bfzeta^j, (\bfmu^*)^j\rangle
=\zeta^j.
$$
Meanwhile, for all $j\in J_4$, it holds that $S_j^\top\cJ g^j(\bfx^*)\bfd=S_j^\top\bfzeta^j=\hat \bfzeta^j$.
Therefore, we know that $\Xi$ has full row rank since it is {surjective}. This completes the proof.
\end{proof}

Under Assumption \ref{assblanket},
one can define the symmetric matrix  $\Theta\in\Re^{n\times n}$ by
\[
\label{ssocq}
\Theta:=
 \nabla_{\bfx\bfx}^2\cL(\bfx^*,\bflambda^*,\bfmu^*)
-\sum_{j\in J_3}\frac{(\dot\mu^*)^j}{\dot g^j(\bfx^*)}
\nabla g^j(\bfx^*)
\begin{pmatrix}
1 & \bfzero^\top\\
\bfzero & -I_{r_j}
\end{pmatrix}
\cJ g^{j}(\bfx^*).
\]
Based on Lemma \ref{lem:aff}, we further reduce the condition in Lemma \ref{aubproof2} for characterizing the Aubin property of the solution mapping  ${\rS}_{\rm GE}$ \eqref{sge}.

\begin{proposition}
\label{aubins}
Suppose that Assumption \ref{assblanket} holds.
Let $\Theta\in\Re^{n\times n}$  be the matrix defined in \eqref{ssocq}, and $H\in\Re^{n\times \ell}$ be a matrix with full column rank such that $\range H=\aff(\cC(\bfx^*))$.
Then, the mapping ${\rS}_{\rm GE}$ in \eqref{sge} has the Aubin property at the origin for $\bfx^*$ if and only if for all $\bfnu\in\Re^{\ell}\setminus\{\bfzero\}$,
\[
\label{aubr}
H^\top \Theta H\bfnu \notin -H^\top\left\{\sum_{j\in J_{4,5,6}}\nabla g^j(\bfx^*)\bfp^j \Big\vert\
\bfp^j\in \cD^*\cN_{\cQ_j}(g^j(\bfx^*),-(\bfmu^*)^j)(\cJ g^j(\bfx^*)H\bfnu) \right\}.
\]
\end{proposition}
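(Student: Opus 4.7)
The plan is to derive \eqref{aubr} from the coderivative-based characterization in Lemma \ref{aubproof2} by parameterizing every $\bfd\in\aff(\cC(\bfx^*))$ as $\bfd=H\bfnu$ with $\bfnu\in\Re^\ell$, and then multiplying the non-inclusion of Lemma \ref{aubproof2} through by $H^\top$. Since $\range H=\aff(\cC(\bfx^*))=\ker\Xi$ by Lemma \ref{lem:aff}, the operator $H^\top$ annihilates precisely $\range\Xi^\top$. The goal is to show that every contribution coming from indices $j\in J_{1,2,3}$ in Lemma \ref{aubproof2}, together with the $\range S_j$-components of the coderivative terms for $j\in J_4$, can be identified with a generator of $\range\Xi^\top$, so that after $H^\top$-multiplication only the blocks $j\in J_{4,5,6}$ survive and the Lemma \ref{aubproof2} non-inclusion collapses to \eqref{aubr}; the $J_3$ contribution is special because its particular-solution part will be absorbed into the matrix $\Theta$ in \eqref{ssocq}.

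First I would carry out a block-by-block case analysis of $\cD^*\cN_{\cQ_j}(g^j(\bfx^*),-(\bfmu^*)^j)(\cJ g^j(\bfx^*)H\bfnu)$ via Lemma \ref{codersecond}. For $j\in J_2$ the coderivative equals $\{\bfzero\}$ and no term appears. For $j\in J_1$, $\cJ g^j(\bfx^*)H\bfnu=\bfzero$ by \eqref{affcricdef2}, so Part~(1) gives $\Re^{1+r_j}$, and its image under $\nabla g^j(\bfx^*)$ is a column block of $\range\Xi^\top$. The decisive case is $j\in J_3$: Part~(3) identifies the coderivative with the affine line
\[
\left\{\tfrac{\dot v}{\dot u}(\dot q^j;-\bar\bfq^j)+\tau\bigl(-1;\,\bar\bfu/\|\bar\bfu\|\bigr)\,:\,\tau\in\Re\right\},
\]
where $\bfu=g^j(\bfx^*)$, $\bfv=-(\bfmu^*)^j$, $\bfq^j=\cJ g^j(\bfx^*)H\bfnu$, and the complementarity relation $(\bfmu^*)^j\propto(\dot g^j(\bfx^*);-\bar g^j(\bfx^*))$ forces $\dot v/\dot u=-(\dot\mu^*)^j/\dot g^j(\bfx^*)$. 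Multiplying the particular-solution piece by $\nabla g^j(\bfx^*)$ produces exactly $\cH_j(\bfx^*,\bfmu^*)H\bfnu$ as defined in Definition \ref{defsoccs}, while the free direction $(-1;\bar\bfu/\|\bar\bfu\|)$ is parallel to $(\bfmu^*)^j$ and hence again generates a summand of $\range\Xi^\top$. Thus the $J_3$ block splits into a deterministic piece that converts $\nabla_{\bfx\bfx}^2\cL$ into $\Theta$ via \eqref{ssocq} and a free piece annihilated by $H^\top$.

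For $j\in J_4$, \eqref{affcricdef2} gives $\cJ g^j(\bfx^*)H\bfnu\in\Re\cdot((\dot\mu^*)^j;-(\bar\bfmu^*)^j)$, and Part~(5) of Lemma \ref{codersecond} describes the coderivative by a sign condition on $\langle\bfp^j,\cdot\rangle$ that is invariant under translations by $\range S_j$; since $\nabla g^j(\bfx^*)S_j$ is a column block of $\Xi^\top$ by \eqref{defXi}, such translations are invisible under $H^\top$, so any representative in the coderivative may be used. Indices $j\in J_{5,6}$ require no modification. Assembling these pieces, the non-inclusion of Lemma \ref{aubproof2} at $\bfd=H\bfnu$ becomes, after multiplication by $H^\top$, exactly \eqref{aubr}. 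The ``only if'' direction is by contraposition: a witness to the equality in \eqref{aubr} is expanded via $(\range H)^\perp=\range\Xi^\top$ into an explicit combination of $\nabla h(\bfx^*)$-, $\{\nabla g^j(\bfx^*)\}_{j\in J_1}$-, $\{\nabla g^j(\bfx^*)(\bfmu^*)^j\}_{j\in J_3}$-, and $\{\nabla g^j(\bfx^*)S_j\}_{j\in J_4}$-terms, each of which is matched against a valid coderivative element to reconstruct a violation of Lemma \ref{aubproof2}; the ``if'' direction reverses this construction.

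The main obstacle is the exact algebraic identity $\nabla g^j(\bfx^*)\bigl(\tfrac{\dot v}{\dot u}(\dot q^j;-\bar\bfq^j)\bigr)=\cH_j(\bfx^*,\bfmu^*)H\bfnu$ for $j\in J_3$, which is what pins down the precise form of the correction in \eqref{ssocq} and what preserves the ``if and only if'' character of the reduction. Carefully tracking signs through Part~(3) of Lemma \ref{codersecond}, identifying the free direction with $\Re\cdot(\bfmu^*)^j$, and recognizing $\range S_j$ as the natural gauge freedom in the $J_4$ coderivative are the nontrivial bookkeeping steps; the rest is assembly using $\range H=\ker\Xi$.
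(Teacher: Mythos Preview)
Your proposal is correct and follows essentially the same route as the paper's proof: start from Lemma~\ref{aubproof2}, parameterize $\bfd=H\bfnu$, carry out the block-by-block coderivative computation via Lemma~\ref{codersecond} so that $J_1$ and the free $J_3$-direction land in $\range\Xi^\top=\ker H^\top$, the $J_3$ particular solution converts $\nabla_{\bfx\bfx}^2\cL$ into $\Theta$, and the $J_4$ coderivative's $\range S_j$-invariance absorbs the remaining $\Xi^\top$-generators, with the two directions handled separately by contraposition. The paper organizes the argument slightly differently---it first passes to an intermediate non-inclusion with the explicit subspace $\cR=\Span\big(\nabla h(\bfx^*),\{\nabla g^j(\bfx^*)\}_{j\in J_1},\{\nabla g^j(\bfx^*)(\bfmu^*)^j\}_{j\in J_3}\big)$ and then proves equivalence of that with \eqref{aubr}---but the substance (including the key $S_j$-absorption step for $j\in J_4$) is the same.
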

\begin{proof}
In this setting, we know from Lemma \ref{aubproof2} that the mapping ${\rS}_{\rm GE}$ \eqref{sge} has the Aubin property at the origin for $\bfx^*$ if and only if \eqref{aubinreduce1} holds.
For a given vector $\bfd\in\aff(\cC(\bfx^*))\backslash\{\bfzero\}$, one can get from \eqref{affcricdef2} that  $\cJ g^j(\bfx^*)\bfd=\bfzero$ for all $j\in J_1$.
Then by Part (1) of Lemma \ref{codersecond} (or \eqref{coderzerofirst} if $r_j=0$),
$D^*\cN_{\cQ_j}(g^j(\bfx^*),-(\bfmu^*)^j)(\cJ g^j(\bfx^*)\bfd)=\Re^{1+r_j}$ holds for all $j\in J_1$.
Meanwhile, one has that
$D^*\cN_{\cQ_j}(g^j(\bfx^*),-(\bfmu^*)^j)(\cJ g^j(\bfx^*)\bfd)=\{\bfzero\}$ holds for all $j\in J_2$
by Part (2) of Lemma \ref{codersecond}
(or \eqref{coderzerofirst} if $r_j=0$).
Furthermore, if $j\in J_3$, one has
$\langle \cJ g^j(\bfx^*)\bfd, (\bfmu^*)^j\rangle =0$.
Then by Part (3) of Lemma \ref{codersecond} it holds that
$$
\begin{array}{ll}
\cD^*\cN_{\cQ_j}(g^j(\bfx^*),-(\bfmu^*)^j)(\cJ g^j(\bfx^*)\bfd)
\\
\qquad =
\left\{\left(-\frac{(\dot\mu^*)^j}{\dot g^j(\bfx^*)}{\dot\varepsilon^j}-\tau;\,
\frac{(\dot\mu^*)^j}{\dot g^j(\bfx^*)}\bar{\boldsymbol\varepsilon}^j+\tau\frac{\bar g^j(\bfx^*)}{\|\bar g^j(\bfx^*)\|}\right)
\mid
\tau\in\Re
\right\},
\quad\forall j\in J_3,
\end{array}
$$
where $\boldsymbol{\varepsilon}^j:= \cJ g^j(\bfx^*)\bfd$.
Note that
$$
\begin{array}{ll}
\nabla g^j(\bfx^*)\left(-\frac{(\dot\mu^*)^j}{\dot g^j(\bfx^*)}{\dot\varepsilon^j}-\tau;\,
\frac{(\dot\mu^*)^j}{\dot g^j(\bfx^*)}\bar{\boldsymbol\varepsilon}^j+\tau\frac{\bar g^j(\bfx^*)}{\|\bar g^j(\bfx^*)\|}\right)
\\
=-\frac{(\dot\mu^*)^j}{\dot g^j(\bfx^*)}\nabla g^j(\bfx^*)
\begin{pmatrix} 1&\bfzero^\top \\ \bfzero & -I_{r_j} \end{pmatrix}(\cJ g^j(\bfx^*)\bfd)
-\tau \nabla g^j(\bfx^*)
\left(1; - \frac{\bar g^j(\bfx^*)}{\|\bar g^j(\bfx^*)\|}\right),
\quad\forall j\in J_3.
\end{array}
$$
Therefore, by the above discussions and \eqref{ssocq}, the condition \eqref{aubinreduce1} can be equivalently written as for all $\bfd\in\aff(\cC(\bfx^*))\setminus\{\bfzero\}$ it holds that
\[
\label{aubin123}
\begin{array}{ll}
\Theta\bfd
\notin
\cR-\left\{
\sum\limits_{j\in J_{4,5,6}}\nabla g^j(\bfx^*)\bfp^j
\ \Big\vert \
\bfp^j\in \cD^*\cN_{\cQ_j}(g^j(\bfx^*),-(\bfmu^*)^j)(\cJ g^j(\bfx^*)
\bfd)
\right\},
\end{array}
\]
where $\cR$ is the subspace defined by
$$
\begin{array}{ll}
\cR:=
\Span\left(
\{\nabla h(\bfx^*) \}
\cup
\left\{ \nabla g^j(\bfx^*) \right\}_{j\in J_1}
\cup
\left\{\nabla g^j(\bfx^*)(\bfmu^*)^j \right\}_{j\in J_3}
\right).
\end{array}
$$
Note that $\range H=\aff(\cC(\bfx^*))$.
Hence, \eqref{aubin123} holds if and only if for all $\bfnu\in\Re^{\ell}\setminus\{\bfzero\}$ it holds that
\[
\label{aub321}
\begin{array}{ll}
\Theta H\bfnu
\notin
\cR-
\left\{
\sum\limits_{j\in J_{4,5,6}}\nabla g^j(\bfx^*)\bfp^j
\ \Big\vert \
\bfp^j\in \cD^*\cN_{\cQ_j}(g^j(\bfx^*),-(\bfmu^*)^j)(\cJ g^j(\bfx^*)H\bfnu)
\right\}.
\end{array}
\]
Therefore, the proof of the proposition is finished if \eqref{aubr} and \eqref{aub321} are equivalent.

On the one hand, suppose that \eqref{aubr} does not hold, i.e., there exists a nonzero vector $\tilde\bfnu\in\Re^\ell$ such that
\[
\label{aubf}
H^\top \Theta H \tilde \bfnu
=-H^\top \sum_{j\in J_{4,5,6}} \nabla g^j(\bfx^*)\bfp^j
\ \mbox{with} \
\bfp^j\in \cD^*\cN_{\cQ_j}(g^j(\bfx^*),-(\bfmu^*)^j)(\cJ g^j(\bfx^*)H\tilde\bfnu).
\]
Therefore, one has
$\Theta H\tilde \bfnu + \sum_{j\in J_{4,5,6}} \nabla g^j(\bfx^*)\bfp^j
\in \ker H^\top $.
Let $\Xi$ be the matrix defined in \eqref{defXi}. Then, for any $\bfzeta^0\in \Re^m$,
$\{\bfzeta^j\in \Re^{1+r_j}\}_{j\in J_1}$,
$\{\zeta^j\in \Re\}_{j\in J_3}$,
and
$\{\hat\bfzeta^j\in \Re^{r_j}\}_{j\in J_4}$
one has
$$
\begin{array}{l}
\ds
\Xi^\top(\bfzeta^0;\{\bfzeta^j\}_{j\in J_1};\{\zeta^j\}_{j\in J_3};\{\hat\bfzeta^j\}_{j\in J_4})
\\[2mm]
=\nabla h(\bfx^*)\bfzeta^0
+\sum_{j\in J_1}
\nabla g^{j}(\bfx^*)\bfzeta^j
+\sum_{j\in J_3}
\zeta^j\nabla g^{j}(\bfx^*)(\bfmu^*)^j
+\sum_{j\in J_4}
\nabla g^{j}(\bfx^*)S_j\hat\bfzeta^j.
\end{array}
$$
Since $\ker(H^\top )={\aff(\cC(\bfx^*))}^\perp=\range \Xi^\top$, by defining
$\overline \cR:=\cR\cup\Span (\{\nabla g^{j}(\bfx^*)S_j\}_{j\in J_4})$ one can get
$$
\begin{array}{ll}
\Theta H\tilde \bfnu
\in \overline \cR-
\left\{
\sum\limits_{j\in J_{4,5,6}}\nabla g^j(\bfx^*)\bfp^j
\ \Big\vert \
\bfp^j\in \cD^*\cN_{\cQ_j}(g^j(\bfx^*),-(\bfmu^*)^j)(\cJ g^j(\bfx^*)H\tilde \bfnu)
\right\}.
\end{array}
$$
Note that for any $j\in J_4$
one has from  \eqref{jdecompose} that
$(\bfmu^*)^j\in\boundary\cQ_j\setminus\{\bfzero\}$.
Meanwhile, as $H\tilde\bfnu\in \aff(\cC(\bfx^*))$, one has from \eqref{affcricdef2} that
$$
\begin{array}{ll}
\cJ g^j(\bfx^*)H\tilde \bfnu
=
\tilde \tau_j((\dot\mu^*)^j; -(\bar\bfmu^*)^j)
=
\tilde\tau_j (\dot\mu^*)^j\left(1; \frac{-(\bar\bfmu^*)^j}{(\dot\mu^*)^j}\right)
=
\tilde\tau_j (\dot\mu^*)^j\left(1; -\frac{(\bar\bfmu^*)^j}{\|(\bar\bfmu^*)^j\|}\right)
\end{array}
$$
with $\tilde \tau_j\in\Re$.
Recall that the columns of $S_j$ span a basis of $\{((\dot\mu^*)^j;-(\bar\bfmu^*)^j)\}^\perp$.
Consequently, we know that
$\langle S_j\hat \bfzeta^j, \cJ g^j(\bfx^*)H\tilde \bfnu\rangle=0$ for any $\hat\bfzeta^j\in \Re^{r_j}$.
Then by Part (5) of Lemma \ref{codersecond} one has
$$
-S_j\hat \bfzeta^j\in \cD^*\cN_{\cQ_j}(g^j(\bfx^*),-(\bfmu^*)^j)(\cJ g^j(\bfx^*)H\tilde \bfnu),
\quad\forall \hat\bfzeta^j\in \Re^{r_j},
\quad\forall j\in J_4.
$$
Recall from \eqref{aubf} that
$\bfp^j\in \cD^*\cN_{\cQ_j}(g^j(\bfx^*),-(\bfmu^*)^j)(\cJ g^j(\bfx^*)H\tilde\bfnu)$.
Meanwhile, for any $j\in J_4$,
it holds that $\langle \bfp^j- S_j\hat \bfzeta^j, \cJ g^j(\bfx^*)H\tilde\bfnu\rangle =\langle \bfp^j, \cJ g^j(\bfx^*)H\tilde\bfnu\rangle$.
Then, it is easy to see from Part (5) of Lemma \ref{codersecond} that
$$
\bfp^j- S_j\hat \bfzeta^j\in
\cD^*\cN_{\cQ_j}(g^j(\bfx^*),-(\bfmu^*)^j)(\cJ g^j(\bfx^*)H\tilde\bfnu),
\quad\forall \hat\bfzeta^j\in \Re^{r_j},
\quad\forall j\in J_4.
$$
Consequently, \eqref{aub321} does not hold.

On the other hand, suppose that \eqref{aub321}  is not true.
Then there are vectors and real numbers
$\hat\bfnu\in\Re^{\ell}$,
$\bfzeta^0\in \Re^m$,
$\{\bfzeta^j\in \Re^{1+r_j}\}_{j\in J_1}$,
$\{\zeta^j\in \Re\}_{j\in J_3}$,
and
$\{\bfp^j\in \Re^{1+r_j}\}_{j\in J_{4,5,6}}$
such that
$$
\Theta H\hat\bfnu
=
\nabla h(\bfx^*)  \bfzeta^0
+ \sum_{j\in J_1}\nabla g^{j}(\bfx^*) \bfzeta^j
+ \sum_{j\in J_3} \zeta^j\nabla g^j(\bfx^*)(\bfmu^*)^j
- \sum_{j\in J_{4,5,6}}\nabla g^j(\bfx^*)\bfp^j
$$
with
$\bfp^j\in \cD^*\cN_{\cQ_j}(g^j(\bfx^*),-(\bfmu^*)^j)(\cJ g^j(\bfx^*)H\hat\bfnu)$ for all $j\in J_{4,5,6}$.
Then, for any $\bfnu\in\Re^{\ell}$, by noting that $H\bfnu\in\aff(\cC(\bfx^*))$ and using \eqref{affcricdef2} one can get
$$
\begin{array}{ll}
\langle \bfnu, H^\top\Theta H\hat\bfnu\rangle
=
\langle H\bfnu, \Theta H\hat\bfnu\rangle
\\
=
\langle \cJ h(\bfx^*)H\bfnu, \bfzeta^0\rangle
+\sum\limits_{j\in J_1} \langle \cJ g_{j}(\bfx^*)H\bfnu,  \bfzeta^j\rangle
+\sum\limits_{j\in J_3} \zeta^j\langle \cJ g^{j}(\bfx^*)H\bfnu,(\bfmu^*)^j\rangle
\\
\quad
-\sum\limits_{j\in J_{4,5,6}}\langle H\bfnu,\nabla g^j(\bfx^*)\bfp^j\rangle
=
-\sum\limits_{j\in J_{4,5,6}}\langle \bfnu, H^\top\nabla g^j(\bfx^*)\bfp^j\rangle.
\end{array}
$$
Since $\bfnu$ can be arbitrarily chosen, one can get
$H^\top\Theta H\hat\bfnu=
-\sum_{j\in J_{4,5,6}}
 H^\top\nabla g^j(\bfx^*)\bfp^j$, which contradicts \eqref{aubr}.
This completes the proof.
\end{proof}

The following result is a direct consequence of Proposition \ref{aubins}.

\begin{corollary}
\label{aubinss}
Suppose that Assumption \ref{assblanket} holds.
Let $\Theta\in\Re^{n\times n}$  be the matrix given in \eqref{ssocq},
and $H\in\Re^{n\times \ell}$ be a matrix with full column rank such that $\range H=\aff(\cC(\bfx^*))$.
Then the mapping ${\rS}_{\rm GE}$ in \eqref{sge} having the Aubin property at the origin for $\bfx^*$ implies that for all
$\bfnu\in\Re^\ell\setminus\{\bfzero\}$ it holds that
\[
\label{aubrxnew}
\begin{array}{ll}
H^\top \Theta H\bfnu
\notin
-
\left\{
\sum\limits_{j\in J_{4,5}} \cG_j^\top p^j
+\sum\limits_{j\in J_6} \cG_j^\top \bfp^j \ \Big\vert\
\begin{array}{ll}
p^j\in \cD^*\cN_{[0,+\infty)}(0,0) (\cG_j\bfnu),
\\[1mm]
\bfp^j\in \cD^*\cN_{\cQ_j}(\bfzero,\bfzero)(\cG_j\bfnu)
\end{array}
\right\},
\end{array}
\]
where each $\cG_j$ is either a vector in $\Re^{\ell}$
or a matrix in $\Re^{(1+r_j)\times\ell}$ defined by
\[
\label{defgj}
\cG_j:=
\begin{cases}
((\dot\mu^*)^j;-(\bar\bfmu^*)^j)^\top
\cJ g^j(\bfx^*)H,
& \forall \, j\in J_4,
\\
(\dot g^j(\bfx^*);-\bar g^j(\bfx^*))^\top
\cJ g^j(\bfx^*)H,
& \forall \, j\in J_5,
\\
\cJ g^j(\bfx^*)H, & \forall \, j\in J_6.
\end{cases}
\]
Moreover, the matrix $H^\top \Theta H\in\Re^{\ell\times\ell}$ is nonsingular.
\end{corollary}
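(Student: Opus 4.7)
The plan is to derive Corollary \ref{aubinss} directly from Proposition \ref{aubins} by simplifying, class by class, the coderivative sets appearing on the right-hand side of \eqref{aubr}. The goal is to show that the set of admissible vectors $H^\top\sum_{j\in J_{4,5,6}}\nabla g^j(\bfx^*)\bfp^j$ in \eqref{aubr} actually coincides with the set in \eqref{aubrxnew}; the first assertion then follows by substitution, and the nonsingularity of $H^\top\Theta H$ falls out as a quick consequence.

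The crux is handling $j\in J_4$. For such $j$, Lemma \ref{lem:aff} gives $S_j^\top\cJ g^j(\bfx^*)H=O$, so $\cJ g^j(\bfx^*)H\bfnu$ is orthogonal to $\range S_j$ and therefore equals $\alpha_j((\dot\mu^*)^j;-(\bar\bfmu^*)^j)$ for a scalar $\alpha_j$ whose sign matches that of $\cG_j\bfnu$ (because $\cG_j\bfnu=2\alpha_j((\dot\mu^*)^j)^2$). With $\bfq=\cJ g^j(\bfx^*)H\bfnu$, Part (5) of Lemma \ref{codersecond} applies; decomposing any admissible $\bfp^j=\beta((\dot\mu^*)^j;-(\bar\bfmu^*)^j)+S_j\bfs$, the identity $H^\top\nabla g^j(\bfx^*)S_j=O$ kills the second piece, leaving $H^\top\nabla g^j(\bfx^*)\bfp^j=\beta\,\cG_j^\top$. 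The main bookkeeping step, which I expect to be the most delicate part, is to verify that the three sign regimes of Part (5) ($\langle\bfp^j,\bfq\rangle=0$, $\langle\bfp^j,\bfq\rangle\ge0$, or $\bfp^j$ free) translate, via $\langle\bfp^j,\bfq\rangle=\beta\,\cG_j\bfnu$, into exactly the three regimes of $\cD^*\cN_{[0,+\infty)}(0,0)(\cG_j\bfnu)$ listed in \eqref{coderzero}.

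For $j\in J_5$, Part (4) of Lemma \ref{codersecond} already constrains $\bfp^j$ to lie on the line $\Re(1;-\bar g^j(\bfx^*)/\|\bar g^j(\bfx^*)\|)$, and rescaling by $\dot g^j(\bfx^*)>0$ puts the scalar parameter into one-to-one correspondence with $\cD^*\cN_{[0,+\infty)}(0,0)(\cG_j\bfnu)$, giving $H^\top\nabla g^j(\bfx^*)\bfp^j=p^j\cG_j^\top$. For $j\in J_6$, no reduction is needed, since $\cG_j=\cJ g^j(\bfx^*)H$ by definition and therefore $H^\top\nabla g^j(\bfx^*)\bfp^j=\cG_j^\top\bfp^j$ with $\bfp^j\in\cD^*\cN_{\cQ_j}(\bfzero,\bfzero)(\cG_j\bfnu)$. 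Assembling these three identifications converts the right-hand side of \eqref{aubr} into that of \eqref{aubrxnew}, yielding the first conclusion.

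Finally, for the nonsingularity of $H^\top\Theta H$, I would exploit two ``zero-in-the-coderivative'' facts already established in the paper: $0\in\cD^*\cN_{[0,+\infty)}(0,0)(q)$ for every $q\in\Re$ by \eqref{coderzero}, and $\bfzero\in\cD^*\cN_{\cQ_j}(\bfzero,\bfzero)(\bfq)$ for every $\bfq\in\Re^{1+r_j}$ by Part (6e) of Lemma \ref{codersecond}. Picking $p^j=0$ and $\bfp^j=\bfzero$ everywhere shows that $\bfzero$ lies in the forbidden set of \eqref{aubrxnew}; so if $H^\top\Theta H\bfnu=\bfzero$ for some $\bfnu\neq\bfzero$, \eqref{aubrxnew} would fail at that $\bfnu$, contradicting the Aubin property. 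Hence $H^\top\Theta H$ is nonsingular.
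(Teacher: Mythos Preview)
Your proposal is correct and follows the same overall strategy as the paper: reduce each block in \eqref{aubr} using Parts (4), (5), and (6e) of Lemma~\ref{codersecond} together with \eqref{coderzero}, and then obtain nonsingularity by plugging in the zero coderivative elements. The difference lies in the $J_4$ case. The paper first rewrites \eqref{aubr} as an intermediate condition \eqref{aubrx} in which the $J_4$ terms still involve the full vector $\bfp^j\in\Re^{1+r_j}$ (constrained by $-\langle\cJ g^j(\bfx^*)H\bfnu,\bfp^j\rangle\in\cD^*\cN_{[0,+\infty)}(0,0)(\cG_j\bfnu)$), and then passes to \eqref{aubrxnew} by a one-sided set inclusion. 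Your route is more direct: the orthogonal decomposition $\bfp^j=\beta((\dot\mu^*)^j;-(\bar\bfmu^*)^j)+S_j\bfs$ together with $H^\top\nabla g^j(\bfx^*)S_j=(S_j^\top\cJ g^j(\bfx^*)H)^\top=O$ (from $\range H=\ker\Xi$ in Lemma~\ref{lem:aff}) shows at once that the image $H^\top\nabla g^j(\bfx^*)\bfp^j$ depends only on $\beta$, and that the constraint $\langle\bfp^j,\bfq\rangle=\beta\,\cG_j\bfnu$ also depends only on $\beta$; this yields \emph{equality} of the two sets in one step rather than an inclusion, and avoids the intermediate \eqref{aubrx} altogether. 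Both arguments are sound; yours is slightly cleaner and proves a marginally stronger statement (equivalence of \eqref{aubr} and \eqref{aubrxnew}) than the corollary actually needs.
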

\begin{proof}
Let $\bfzero\neq \bfnu\in\Re^\ell$ be arbitrarily chosen.
For $j\in J_4$,
one has from \eqref{affcricdef2} that $\cJ g^j(\bfx^*)H\bfnu=\tau_j((\dot\mu^*)^j; -(\bar\bfmu^*)^j)$ with $\tau_j\in\Re$.
Meanwhile, from \eqref{jdecompose} one has $g^j(\bfx^*)=\bfzero$ and $(\bfmu^*)^j\in\boundary\cQ_j\setminus\{\bfzero\}$.
Then by Part (5) of Lemma \ref{codersecond} and \eqref{defgj} one has
$\bfp^j \in \cD^*\cN_{\cQ_j}(g^j(\bfx^*),-(\bfmu^*)^j)(\cJ g^j(\bfx^*)H\bfnu)$
if and only if
$$
\left\{
\begin{array}{ll}
\bfp^j\in\Re^{1+r_j}, &
\mbox{ if }\, \cG_j\bfnu=0,
\\
\langle \cJ g^j(\bfx^*)H\bfnu, \bfp^j\rangle=0, &
\mbox{ if }\, \cG_j\bfnu>0,
\\
\langle \cJ g^j(\bfx^*)H\bfnu, \bfp^j\rangle \ge 0, &
\mbox{ if }\, \cG_j\bfnu<0,
\end{array}
\right.
$$
which, by \eqref{coderzero}, is equivalent to
$- \langle \cJ g^j(\bfx^*)H\bfnu, \bfp^j\rangle \in\cD^*\cN_{[0,+\infty)}(0,0) (\cG_j\bfnu)$.

For $j\in J_5$, from \eqref{jdecompose}
one has $g^j(\bfx^*)\in\boundary\cQ_j\setminus\{\bfzero\}$ and $(\bfmu^*)^j=\bfzero$.
Therefore,
by Part (4) of Lemma \ref{codersecond} one has
$\bfp^j \in \cD^*\cN_{\cQ_j}(g^j(\bfx^*),-(\bfmu^*)^j)(\cJ g^j(\bfx^*)H\bfnu)$
if and only if $\bfp^j=p^j (\dot g^j(\bfx^*);-\bar g^j(\bfx^*))$ with
$$
p^j\in \Re, \mbox{ if }\, \cG_j\bfnu= 0,
\qquad
p^j =0, \mbox{ if }\, \cG_j\bfnu > 0,
\qquad
\mbox{and}\quad
p^j \le 0, \mbox{ if }\, \cG_j\bfnu< 0.
$$
By \eqref{coderzero}, the above condition on $p^j$ is exactly $p^j\in \cD^*\cN_{[0,+\infty)}(0,0) (\cG_j\bfnu)$.
Furthermore, one has $g^j(\bfx^*)=(\bfmu^*)^j=\bfzero$ for $j\in J_6$.
Therefore, \eqref{aubr} in Lemma \ref{aubins} is equivalent to
\[
\label{aubrx}
\begin{array}{ll}
H^\top \Theta H\bfnu
\notin
-
\left\{
\begin{array}{ll}
 \sum\limits_{j\in J_4} H^\top\nabla g^j(\bfx^*)\bfp^j
+\sum\limits_{j\in J_5} \cG_j^\top p^j
+\sum\limits_{j\in J_6} \cG_j^\top \bfp^j
\ \Big\vert\
\qquad\qquad
\\[3mm] \hfill
-\langle  \cJ g^j(\bfx^*)H\bfnu,\bfp^j \rangle  \in\cD^*\cN_{[0,+\infty)}(0,0) (\cG_j \bfnu),\ j\in J_4,
\\[2mm]
\hfill
p^j\in \cD^*\cN_{[0,+\infty)}(0,0) (\cG_j\bfnu),\ j\in J_5,
\\[2mm] \hfill
\bfp^j\in \cD^*\cN_{\cQ_j}(\bfzero,\bfzero)(\cG_j\bfnu),\ j\in J_6
\end{array}
\right\}.
\end{array}
\]
Note that for $j\in J_4$, one has
$$
\begin{array}{ll}
\left\{
\cG_j^\top p^j\mid p^j\in\cD^*\cN_{[0,+\infty)}(0,0) (\cG_j\bfnu)
\right\}
\\[2mm]
=\left\{
\cG_j^\top p^j
\
\Bigg\vert
\
\begin{array}{ll}
p^j\in \Re, &
\mbox{ if }\, ((\dot\mu^*)^j;-(\bar\bfmu^*)^j)^\top
\cJ g^j(\bfx^*)H \bfnu=0,
\\
p^j=0, &
\mbox{ if }\, ((\dot\mu^*)^j;-(\bar\bfmu^*)^j)^\top
\cJ g^j(\bfx^*)H \bfnu>0,
\\
p^j \le 0, &
\mbox{ if }\, ((\dot\mu^*)^j;-(\bar\bfmu^*)^j)^\top
\cJ g^j(\bfx^*)H \bfnu<0
\end{array}
\right\}
\\[5mm]
\subseteq
\left\{H^\top\nabla  g^j(\bfx^*) \bfp^j
\
\Big\vert
\
\begin{array}{ll}
-\bfp^j\in\Re^{1+r_j}, &
\mbox{ if }\, \cG_j\bfnu=0,
\\
-\langle \cJ g^j(\bfx^*)H\bfnu, \bfp^j\rangle=0, &
\mbox{ if }\, \cG_j\bfnu>0,
\\
-\langle \cJ g^j(\bfx^*)H\bfnu, \bfp^j\rangle \le 0, &
\mbox{ if }\, \cG_j\bfnu<0
\end{array}
\right\}
\\[5mm]
=
\left\{H^\top\nabla  g^j(\bfx^*) \bfp^j
\ \Big\vert\
-\langle  \cJ g^j(\bfx^*)H\bfnu,\bfp^j \rangle  \in\cD^*\cN_{[0,+\infty)}(0,0) (\cG_j \bfnu)
\right\}.
\end{array}
$$
Therefore, \eqref{aubrxnew} is a consequence of \eqref{aubrx}.

Suppose that there exists a nonzero vector $\bfnu\in\Re^l$ such that $H^\top \Theta H\bfnu=\bfzero$.
In this case, one can take $\bfp^j=\bfzero$ for all $j\in J_{4,6}$ and $p^j=0$ for all $j\in J_5$ to get valid coderivatives in \eqref{aubrx}
via \eqref{coderzero} and Part (6e) of Lemma \ref{codersecond}.
This contradicts \eqref{aubrx},
so the matrix $H^\top \Theta H$ is nonsingular,
which completes the proof.
\end{proof}

We have the following result for supplementing Corollary \ref{aubinss}.
\begin{corollary}
\label{aubins2}
Suppose that Assumption \ref{assblanket} holds.
Let $\Theta\in\Re^{n\times n}$  be the matrix defined in \eqref{ssocq}, and  $H\in\Re^{n\times \ell}$ be a matrix with full column rank such that $\range H=\aff(\cC(\bfx^*))$.
For $\cG_j$ defined in \eqref{defgj}, one has
\[
\label{fullrange}
\range\left(\{\cG_j\}_{{j\in J_{4,5,6}}}\right)
=
\Re^{|J_{4,5}|
+\sum_{j\in J_6}(1+r_j)},
\]
where
$\{\cG_j\}_{{j\in J_{4,5,6}}}$ is the matrix with rows being $\cG_j$.

\end{corollary}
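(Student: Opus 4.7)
\smallskip

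The plan is to show that the stacked linear map $\bfnu\mapsto(\cG_j\bfnu)_{j\in J_{4,5,6}}$ is surjective, which, since $H$ has full column rank with $\range H=\aff(\cC(\bfx^*))$, is equivalent to showing that for every prescribed data $\{\tau_j\in\Re\}_{j\in J_4}$, $\{\sigma_j\in\Re\}_{j\in J_5}$ and $\{\bfzeta^j\in\Re^{1+r_j}\}_{j\in J_6}$, there exists $\bfd\in\aff(\cC(\bfx^*))$ such that $((\dot\mu^*)^j;-(\bar\bfmu^*)^j)^\top\cJ g^j(\bfx^*)\bfd=\tau_j$ for $j\in J_4$, $(\dot g^j(\bfx^*);-\bar g^j(\bfx^*))^\top\cJ g^j(\bfx^*)\bfd=\sigma_j$ for $j\in J_5$, and $\cJ g^j(\bfx^*)\bfd=\bfzeta^j$ for $j\in J_6$. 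The main tool is the constraint nondegeneracy~\eqref{cnd}, in the same form exploited in the proof of Lemma~\ref{lem:aff}: for any right-hand side $(\bfzeta^0,\{\bfzeta^j\}_{j=1}^J)$ one can find $\bfd\in\Re^n$ and $\bfv^j\in\lin\cT_{\cQ_j}(g^j(\bfx^*))$ with $\cJ h(\bfx^*)\bfd=\bfzeta^0$ and $\cJ g^j(\bfx^*)\bfd+\bfv^j=\bfzeta^j$.

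My strategy is to pick the auxiliary right-hand side in such a way that the resulting $\bfd$ simultaneously lies in $\aff(\cC(\bfx^*))$ and hits the prescribed targets. Concretely, I would take $\bfzeta^0=\bfzero$, $\bfzeta^j=\bfzero$ for $j\in J_{1,2,3}$, $\bfzeta^j=\frac{\tau_j}{2((\dot\mu^*)^j)^2}((\dot\mu^*)^j;-(\bar\bfmu^*)^j)$ for $j\in J_4$, $\bfzeta^j=\frac{\sigma_j}{2(\dot g^j(\bfx^*))^2}(\dot g^j(\bfx^*);-\bar g^j(\bfx^*))$ for $j\in J_5$, and $\bfzeta^j$ equal to the given target for $j\in J_6$. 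These scalars are well-defined since $(\dot\mu^*)^j>0$ for $j\in J_4$ and $\dot g^j(\bfx^*)>0$ for $j\in J_5$ (both vectors lie on $\boundary\cQ_j\setminus\{\bfzero\}$).

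The last step is to verify both desired properties of the $\bfd$ produced by constraint nondegeneracy. For membership in $\aff(\cC(\bfx^*))$ via~\eqref{affcricdef2}: the equations $\cJ h(\bfx^*)\bfd=\bfzero$ and $\cJ g^j(\bfx^*)\bfd=\bfzero$ for $j\in J_1$ are immediate from $\lin\cT_{\cQ_j}(g^j(\bfx^*))=\{\bfzero\}$ on $J_{1,4,6}$; the same observation gives $\cJ g^j(\bfx^*)\bfd=\bfzeta^j\in\Re((\dot\mu^*)^j;-(\bar\bfmu^*)^j)$ on $J_4$. The nontrivial case is $j\in J_3$: by the complementarity of $g^j(\bfx^*)$ and $(\bfmu^*)^j$ on $\boundary\cQ_j\setminus\{\bfzero\}$ we have $(\bfmu^*)^j=\alpha(\dot g^j(\bfx^*);-\bar g^j(\bfx^*))$ for some $\alpha>0$, which from~\eqref{lintan} is exactly the normal direction to the hyperplane $\lin\cT_{\cQ_j}(g^j(\bfx^*))$; hence $\langle\bfv^j,(\bfmu^*)^j\rangle=0$ and $\langle\cJ g^j(\bfx^*)\bfd,(\bfmu^*)^j\rangle=\langle\bfzeta^j,(\bfmu^*)^j\rangle=0$. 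Achieving the targets is then direct: on $J_{4,6}$ one has $\cJ g^j(\bfx^*)\bfd=\bfzeta^j$, while on $J_5$ the same orthogonality argument makes $\bfv^j$ invisible to the pairing with $(\dot g^j(\bfx^*);-\bar g^j(\bfx^*))$, so $(\dot g^j(\bfx^*);-\bar g^j(\bfx^*))^\top\cJ g^j(\bfx^*)\bfd=(\dot g^j(\bfx^*);-\bar g^j(\bfx^*))^\top\bfzeta^j=\sigma_j$. The main subtlety—and thus the place to be careful—is this orthogonality identity on $J_3$, which prevents the auxiliary $\bfv^j$'s on the hyperplane blocks from pushing $\bfd$ out of $\aff(\cC(\bfx^*))$; with that observation in hand, constraint nondegeneracy supplies the required $\bfd$ and the proof is complete.
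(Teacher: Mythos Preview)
Your proposal is correct and follows essentially the same route as the paper: use constraint nondegeneracy with a carefully chosen right-hand side (zero on $J_{1,2,3}$, a scalar multiple of the reflected vector on $J_{4,5}$, the target on $J_6$) to produce a $\bfd\in\aff(\cC(\bfx^*))$ hitting the prescribed data. Your normalizations $2((\dot\mu^*)^j)^2$ and $2(\dot g^j(\bfx^*))^2$ coincide with the paper's $\|(\bfmu^*)^j\|^2$ and $\|g^j(\bfx^*)\|^2$ since these vectors lie on the boundary of the cone, and your explicit treatment of the $J_3$ orthogonality is what the paper summarizes as ``routinely examined''.
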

\begin{proof}
For the given vectors $\zeta^j\in \Re$ ($j\in J_{4,5}$) and $\bfzeta^j\in\Re^{1+r_j}$ ($j\in  J_6$),
we show that there exists a vector $\bfnu\in\Re^\ell$ such that $\cG_j\bfnu =\zeta^j$ for all $j\in J_{4,5}$
and $\cG_j \bfnu =\bfzeta^j$ for all $j\in J_6$.
Recall that the constraint nondegeneracy condition \eqref{cnd} holds at $\bfx^*$.
Then, for the given vectors $\bfz^j\in\Re^{1+r_j}$ defined by
$$
\begin{array}{lllll}
\bfz^j:=\bfzero, &\mbox{ if } j\in J_{1,2,3},
&\quad &
\bfz^j:=\frac{\zeta^j((\dot\mu^*)^j;-(\bar\bfmu^*)^j)}{\|(\bfmu^*)^j\|^2},
&\mbox{ if } j\in J_4,
\\
\bfz^j:=\bfzeta^j, &\mbox{ if } j\in J_6,
&\mbox{and}&
\bfz^j:=\frac{\zeta^j(\dot g^j(\bfx^*); -\bar g^j(\bfx^*))}{\|g^j(\bfx^*)\|^2},
&\mbox{ if } j\in J_5,
\end{array}
$$
there always exists a vector
$\bfd\in\Re^n$ such that
$$
\cJ h (\bfx^*)\bfd=\bfzero
\ \mbox{and}\
\cJ g^{j}(\bfx^*)\bfd+\bfv^j=\bfz^j
\ \mbox{with}
\
\bfv^j\in
\lin\left(\cT_{\cQ_j}(g^j(\bfx^*))\right), \quad\forall j=1,\dots, J,
$$
where $\lin\left(\cT_{\cQ_j}(g^j(\bfx^*))\right)$ is defined in \eqref{lintan}.
It can be routinely examined from \eqref{affcricdef2} that $\bfd\in\aff(\cC(\bfx^*))$.
Moreover, since $H$ has full column rank such that $\range H=\aff(\cC(\bfx^*))$, there exists a unique vector $\bfnu\in\Re^\ell$ such that $H\bfnu=\bfd$.
For all $j\in J_4$, one has $\bfv^j=\bfzero$, so that by \eqref{defgj} one can get
$\cG_j\bfnu=((\dot\mu^*)^j;-(\bar\bfmu^*)^j)^\top
\cJ g^j(\bfx^*)H\bfnu=\zeta^j$.
Furthermore, for $j\in J_5$, one has $\langle \bar\bfv^j,\bar g^j(\bfx^*)\rangle-\dot{v}^j\dot g^j(\bfx^*) = 0$.
Thus by \eqref{defgj} one has
$$
\cG_j\bfnu=(\dot g^j(\bfx^*);-\bar g^j(\bfx^*))^\top
\cJ g^j(\bfx^*)H \bfnu = (\dot g^j(\bfx^*);-\bar g^j(\bfx^*))^\top(\bfz^j-\bfv^j)=\zeta^j.
$$
Finally, for $j\in J_6$, one has from \eqref{defgj} that $\bfv^j=\bfzero$ and $\cG_j\bfnu=\cJ g^j(\bfx^*)H\bfnu=\bfzeta^j$.
Consequently, we know that \eqref{fullrange} holds,
which completes the proof.
\end{proof}

Note that in the reduction procedure introduced above,
we have reduced the original condition \eqref{aubinoriginal} for characterizing the Aubin property of  $\rS_{\rm GE}$  to the condition \eqref{aubrxnew} in Corollary \ref{aubinss}.
The former condition is related to all the constraints in the nonlinear SOCP \eqref{nlsocp},
while the latter condition is only related to the blocks at which the strict complementarity does not hold.

\subsection{Deriving the strong second-order sufficient condition}
Here we reformulate the second-order optimality conditions \eqref{sosc} and \eqref{ssosc} to
fit \eqref{aubrxnew} in Corollary \ref{aubinss}.

\begin{lemma}
\label{lemsosc}
Suppose that Assumption \ref{assblanket} holds.
Let $\Theta\in\Re^{n\times n}$  be the matrix given in \eqref{ssocq}, and  $H\in\Re^{n\times \ell}$ be a matrix with full column rank such that $\range H=\aff(\cC(\bfx^*))$.
Then the second-order sufficient condition \eqref{sosc} is equivalent to
\[
\label{soscreformulation}
\langle \bfnu, H^\top \Theta H \bfnu \rangle>0, \quad \forall \, \bfnu\in\cV\setminus\{\bfzero\},
\]
where $\cV$ is the closed convex cone defined by
\[\label{vg}
\cV=\left\{ \bfnu\in\Re^\ell \, \middle| \,
\cG_j \bfnu\ge 0,
\, \forall \, j\in J_{4,5},
\
\cG_j \bfnu \in \cQ_j,
\, \forall \, j\in J_6
\right\}
\]
with each $\cG_j$ being defined in \eqref{defgj}.
Moreover, the strong second-order sufficient condition \eqref{ssosc} is equivalent to
$H^\top \Theta H \succ O$.
\end{lemma}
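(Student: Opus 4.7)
The plan is to translate each of the two second-order conditions into a quadratic form statement in the reduced variable $\bfnu\in\Re^\ell$, via the parametrization $\bfd=H\bfnu$ of $\aff(\cC(\bfx^*))$. The preliminary step is to recognize that the matrix appearing in \eqref{sosc} and \eqref{ssosc}, namely $\nabla^2_{\bfx\bfx}\cL(\bfx^*,\bflambda^*,\bfmu^*)+\cH(\bfx^*,\bfmu^*)$, coincides with the operator $\Theta$ defined in \eqref{ssocq}. Indeed, by Definition \ref{defsoccs}, each block $\cH_j$ is zero unless $g^j(\bfx^*)\in\boundary\cQ_j\setminus\{\bfzero\}$, and the prefactor $(\dot\mu^*)^j/\dot g^j(\bfx^*)$ kills the summand when $(\bfmu^*)^j=\bfzero$ (i.e., when $j\in J_5$); so only $j\in J_3$ contributes, giving precisely the correction term in \eqref{ssocq}.

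For the strong second-order sufficient condition, I would then observe that since $H$ has full column rank and $\range H=\aff(\cC(\bfx^*))$, the map $\bfnu\mapsto H\bfnu$ is a linear bijection from $\Re^\ell$ onto $\aff(\cC(\bfx^*))$. In particular $\langle \bfd,\Theta\bfd\rangle=\langle \bfnu, H^\top \Theta H \bfnu\rangle$ whenever $\bfd=H\bfnu$, so \eqref{ssosc} is equivalent to $\langle \bfnu, H^\top\Theta H\bfnu\rangle>0$ for every $\bfnu\neq\bfzero$, which is exactly $H^\top \Theta H\succ O$.

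For the second-order sufficient condition, the remaining task is to identify the inequality constraints that cut $\cC(\bfx^*)$ out of $\aff(\cC(\bfx^*))$. Comparing \eqref{criticalcone2} with \eqref{affcricdef2} for $\bfd=H\bfnu\in\aff(\cC(\bfx^*))$: for $j\in J_4$, the affine hull already forces $\cJ g^j(\bfx^*)\bfd=\tau_j((\dot\mu^*)^j;-(\bar\bfmu^*)^j)$ for some $\tau_j\in\Re$, and the additional critical-cone requirement $\tau_j\ge 0$ is equivalent, after multiplying by the positive scalar $\|((\dot\mu^*)^j;-(\bar\bfmu^*)^j)\|^2$, to $\cG_j\bfnu\ge 0$; for $j\in J_5$, the tangent cone in \eqref{tancone} is exactly the half-space $(\dot g^j(\bfx^*);-\bar g^j(\bfx^*))^\top\cJ g^j(\bfx^*)\bfd\ge 0$, i.e., $\cG_j\bfnu\ge 0$; for $j\in J_6$, the condition $\cJ g^j(\bfx^*)\bfd\in\cQ_j$ is simply $\cG_j\bfnu\in\cQ_j$. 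Hence $\cV=\{\bfnu\mid H\bfnu\in\cC(\bfx^*)\}$, so $H\cV=\cC(\bfx^*)$ by injectivity of $H$, and \eqref{sosc} is therefore equivalent to \eqref{soscreformulation}.

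No substantial obstacle is expected: the entire argument is a careful bookkeeping of the reduction $\bfd=H\bfnu$. The only points requiring attention are verifying that the $J_5$ tangent cone translates into a single linear inequality $\cG_j\bfnu\ge 0$ rather than an equality or a cone inclusion, and confirming the sign and scaling compatibility between the critical-cone conditions and the rows of $\cG_j$ for $j\in J_4$; both follow directly from the definitions in \eqref{defgj}.
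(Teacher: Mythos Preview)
Your proposal is correct and follows essentially the same route as the paper's proof: both identify $\nabla^2_{\bfx\bfx}\cL+\cH$ with $\Theta$ (since only $j\in J_3$ contributes to $\cH$), use the bijection $\bfnu\mapsto H\bfnu$ onto $\aff(\cC(\bfx^*))$, and then translate the remaining inequality constraints in \eqref{criticalcone2} via \eqref{tancone} and \eqref{defgj} into the description \eqref{vg} of $\cV$. Your write-up is in fact slightly more explicit than the paper's in checking the sign conventions for $j\in J_4$ and $j\in J_5$, but the argument is the same.
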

\begin{proof}
According to the definition of $\Theta$ in \eqref{ssocq}, one can rewrite \eqref{sosc} as
\[
\label{sosc2}
\langle \bfd,\Theta\bfd\rangle >0,
\quad
\forall\, \bfd\in \cC(\bfx^*)\setminus\{\bfzero\},
\]
where the critical cone $\cC(\bfx^*)$ is given by \eqref{criticalcone2}.
Since $H\in\Re^{n\times\ell}$ is of full column rank and $\range H=\aff(\cC(\bfx^*))$,
from the definition of the critical cone in \eqref{criticalcone2}
and its affine hull in \eqref{affcricdef2},
one can see that the second-order sufficient condition \eqref{sosc2} is equivalent to \eqref{soscreformulation} with
$$
\cV:=\left\{ \bfnu\in\Re^\ell \,\middle|\,
\begin{array}{ll}
\cJ g^j(\bfx^*)H\bfnu \in
\Re_+ (({\dot\mu^*})^j;-({\bar\bfmu^*})^j), &\forall\, j\in J_4,
\\[1mm]
\cJ g^j(\bfx^*)H\bfnu \in \cT_{\cQ_j}(g^j(\bfx^*)), & \forall\, j\in J_{5,6}
\end{array}
\right\}.
$$
Based on the explicit formulations of the tangent cones in \eqref{tancone}, one can reformulate $\cV$ as
the one in \eqref{vg}.
The conclusion on the strong second-order sufficient condition directly comes from the fact that $\range H=\aff(\cC(\bfx^*))$.
This completes the proof.
\end{proof}

Let $\Theta\in\Re^{n\times n}$ be the matrix given in \eqref{ssocq}. Suppose that Assumption \ref{assblanket} holds and that $H\in\Re^{n\times \ell}$ is a matrix with full column rank such that $\range H=\aff(\cC(\bfx^*))$.
To prove the strong second-order sufficient condition, i.e., $H^\top \Theta H \succ O$, we construct a series of symmetric matrices of different dimensions and prove that they are positive definite.

We further suppose that \eqref{aubrxnew} holds with $\cG_j$ ($j\in J_{4,5,6}$) being defined by \eqref{defgj}.
For convenience,  for each integer $i\in\{1,\ldots, |J_{4,5,6}|\}$,
we assign it a unique index $j\in J_{4,5,6}$ denoted by $j_i$.
Without loss of generality, we assume that
\[
\label{jirule}
j_1,\ldots,j_{|J_4|}\subseteq J_4
\quad\mbox{and}
\quad
j_{|J_4|+1},\ldots,j_{|J_{4,5}|}\subseteq J_5.
\]
Then one can write \eqref{defgj} as follows
\[
\label{defginew}
\cG_{j_i}:=
\begin{cases}
((\dot\mu^*)^{j_i};-(\bar\bfmu^*)^{j_i})^\top\cJ g^{j_i}(\bfx^*)H,
& i=1,\ldots,|J_4|,
\\
(\dot g^{j_i}(\bfx^*);-\bar g^{j_i}(\bfx^*))^\top
\cJ g^{j_i}(\bfx^*)H,
& i=|J_4|+1,\ldots,|J_{4,5}|,
\\
\cJ g^{j_i}(\bfx^*)H, &   i=|J_{4,5}|+1, \ldots, |J_{4,5,6}|.
\end{cases}
\]
Recall from Corollary \ref{aubinss} that $H^\top\Theta H$ is nonsingular and \eqref{fullrange} holds.
Based on \eqref{defginew}, we can define the following matrices recursively:
\[
\label{defr}
\begin{cases}
R_i\in\Re^{(\ell-i+1)\times(\ell-i)},
& i=1,\ldots, |J_{4,5}|,
\\
R_i\in
\Re^{\left(\ell-i+1-\sum_{k=|J_{4,5}|+1}^{i-1}r_{j_k}\right)
\times
\left(\ell-i-\sum_{k=|J_{4,5}|+1}^{i}r_{j_k}\right)},
& i=|J_{4,5}|+1,\, \ldots\, , |J_{4,5,6}|
\end{cases}
\]
such that
\[
\label{rrlation}
\range\, R_i=\ker\, (\cG_{j_i} P_{i-1}),
\]
where $P_0\in \Re^{\ell\times\ell}$ is the identity matrix and
\[
\label{defp}
\begin{cases}
P_i:=R_1\times R_2\times\cdots\times R_i\in\Re^{\ell\times(\ell-i)},
&i=1,\ldots, |J_{4,5}|,
\\
P_i:= R_1 \times R_2 \times \cdots \times R_i \in \Re^{\ell \times
\left(\ell-i-\sum_{k=|J_{4,5}|+1}^{i}r_{j_k}\right)},
&i=|J_{4,5}|+1,\ldots, |J_{4,5,6}|.
\end{cases}
\]
In addition, define the matrices $T_0:=H^\top \Theta H$ and
\[
\label{defti}
T_i:=P_i^\top H^\top \Theta H P_i=
R_i^\top\times\cdots\times R_1^\top T_0
R_1\times\cdots\times R_i,
\quad
i=1,\ldots, |J_{4,5,6}|.
\]
It is obvious from \eqref{rrlation} that each $P_i$ has full column rank.
Moreover, we have the following result, which is the most crucial step in our analysis.

\begin{proposition}
\label{propreduce}
Suppose that Assumption \ref{assblanket} holds.
Let $\Theta\in\Re^{n\times n}$  be the matrix given in \eqref{ssocq} and $H\in\Re^{n\times \ell}$ be a matrix with full column rank such that $\range H=\aff(\cC(\bfx^*))$.
Suppose that \eqref{aubrxnew} holds.
For the matrices defined in \eqref{defr}, \eqref{defp} and \eqref{defti}, one has that
each $T_i$ is nonsingular, and
\[
\label{positivequad}
 [\cG_{j_i}P_{i-1}] T_{i-1}^{-1}  [\cG_{j_i}P_{i-1}]^\top \succ O,
\quad
\forall\, i=1,\ldots, |J_{4,5,6}|.
\]
\end{proposition}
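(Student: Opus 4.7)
The plan is to proceed by strong induction on $i \in \{1,\ldots,|J_{4,5,6}|\}$, with induction hypothesis that $T_0, \ldots, T_{i-1}$ are all nonsingular. The base case $T_0 = H^\top \Theta H$ nonsingular is exactly the last assertion of Corollary~\ref{aubinss}. In each inductive step I first establish \eqref{positivequad} at index $i$ and then deduce $T_i$ nonsingular from it.

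The derivation of $T_i$ nonsingular from \eqref{positivequad} is the easy direction. Writing $A = T_{i-1}$ and $B = (\cG_{j_i}P_{i-1})^\top$, the defining relation $\range R_i = \ker(\cG_{j_i}P_{i-1})$ gives $R_i^\top B = \bfzero$, and a direct block computation yields
\begin{equation*}
[R_i,\ A^{-1}B]^\top A [R_i,\ A^{-1}B]
= \begin{pmatrix} T_i & \bfzero \\ \bfzero & B^\top A^{-1}B \end{pmatrix}.
\end{equation*}
A dimension count shows $[R_i, A^{-1}B]$ is a square matrix of size $\dim A$; since the block $B^\top A^{-1}B = M_i$ is nonsingular by \eqref{positivequad} (being positive definite), both $[R_i, A^{-1}B]$ and $T_i$ are nonsingular.

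To prove \eqref{positivequad}, I argue by contradiction against \eqref{aubrxnew}, always using the seed vector $\bfnu_0 = P_{i-1}T_{i-1}^{-1}B\bfw$ for a well-chosen $\bfw$ (a scalar for $j_i\in J_{4,5}$, a vector in $\Re^{1+r_{j_i}}$ for $j_i\in J_6$). The identity $T_{i-1} = P_{i-1}^\top(H^\top\Theta H)P_{i-1}$, combined with $(\range P_{i-1})^\perp = \Span\{\cG_{j_k}^\top : k < i\}$ (from Corollary~\ref{aubins2} and the recursive construction of the $R_k$), yields $\cG_{j_k}\bfnu_0 = \bfzero$ for $k < i$ and $(H^\top\Theta H)\bfnu_0 - \cG_{j_i}^\top \bfw \in \Span\{\cG_{j_k}^\top : k < i\}$; moreover $\bfnu_0 \ne \bfzero$ whenever $\bfw \ne \bfzero$, because the stacked $\{\cG_{j_k}\}_{k \le i}$ has linearly independent rows. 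Since $\cG_{j_k}\bfnu_0 = \bfzero$ for $k < i$, the associated coderivative slots $p^{j_k}$ or $\bfp^{j_k}$ may freely absorb the error term, so the contradiction reduces to exhibiting a valid $\bfp^{j_i}$ (or scalar $p^{j_i}$) with $\cG_{j_i}^\top \bfp^{j_i} = -\cG_{j_i}^\top \bfw$. For $j_i \in J_{4,5}$, this is immediate: $\bfw = 1$ and $M_i \le 0$ force $\cG_{j_i}\bfnu_0 = M_i \le 0$, so $p^{j_i} = -1$ is valid by \eqref{coderzero}.

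The main obstacle is the matrix case $j_i \in J_6$, where $M_i \in \Re^{(1+r_{j_i})\times(1+r_{j_i})}$ and the target is full positive definiteness. I plan to proceed in layers. \emph{Nonsingularity:} any $\bfw \in \ker M_i\setminus\{\bfzero\}$ gives $\cG_{j_i}\bfnu_0 = \bfzero$, so $\bfp^{j_i} = -\bfw$ is valid by Part~(6a) of Lemma~\ref{codersecond}. \emph{Boundary positivity on $\cQ_{j_i}$:} if $\bfw \in \boundary\cQ_{j_i}\setminus\{\bfzero\}$ with $\bfw^\top M_i\bfw \le 0$, then $-\bfw \in -\boundary\cQ_{j_i}\setminus\{\bfzero\}$ and $\langle -\bfw, M_i\bfw\rangle \ge 0$, so Part~(6c) validates $\bfp^{j_i} = -\bfw$. \emph{Interior positivity via Lemma~\ref{lemtech}:} applying Lemma~\ref{lemtech} to $\cM = M_i$ on $Q = \cQ_{j_i}$, the alternative conclusion supplies $\bfv \in -\interior\cQ_{j_i}$ and $\lambda \ge 0$ with $M_i\bfv = -\lambda\bfv \in \cQ_{j_i}$ (and necessarily $\lambda > 0$ by nonsingularity); using the seed $\bfw = -\bfv$ yields $\cG_{j_i}\bfnu_0 = \lambda\bfv \in -\cQ_{j_i}$, and the choice $\bfp^{j_i} = \bfv$ is valid by Part~(6b) because both $\bfv$ and $\lambda\bfv$ lie in $-\cQ_{j_i}$. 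These three layers together give $\bfw^\top M_i \bfw > 0$ on $\cQ_{j_i}\setminus\{\bfzero\}$. The final upgrade to $M_i \succ O$ is then carried out through Lemma~\ref{lemma:pd} with $K = \cQ_{j_i}$, which reduces the remaining task to positivity of $M_i^{-1}$ on $\cQ_{j_i}\setminus\{\bfzero\}$; I plan to obtain this by a symmetric three-layer run on $M_i^{-1}$. The hardest subroutine will be the $M_i^{-1}$-analogue of boundary positivity, because the natural seed $M_i^{-1}\bfu$ need not lie in $\cQ_{j_i}\cup(-\cQ_{j_i})$ and the convenient shortcuts in Lemma~\ref{codersecond}(6) then fail, forcing a direct appeal to the full description of $\cD^*\Pi_{\cQ_{j_i}}(\bfzero)$ in Lemma~\ref{projcod}(6).
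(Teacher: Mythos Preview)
Your overall architecture coincides with the paper's: the same seed $\bfnu_0=P_{i-1}T_{i-1}^{-1}[\cG_{j_i}P_{i-1}]^\top\bfw$, the same use of $(\range P_{i-1})^\perp=\Span\{\cG_{j_k}^\top:k<i\}$, and the same three tools (Lemma~\ref{codersecond}(6), Lemma~\ref{lemtech}, Lemma~\ref{lemma:pd}) for the $J_6$ block. Your $J_{4,5}$ cases and the nonsingularity/boundary/interior arguments for $M_i$ on $\cQ_{j_i}$ are exactly the paper's (pdcond1)--(pdcond2). Two small points deserve correction.

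First, the step you flag as ``hardest'' is in fact immediate. You need $\langle\bfu,M_i^{-1}\bfu\rangle>0$ on $\cQ_{j_i}^\circ\setminus\{\bfzero\}=-\cQ_{j_i}\setminus\{\bfzero\}$. For $\bfu\in-\boundary\cQ_{j_i}\setminus\{\bfzero\}$ with $\langle\bfu,M_i^{-1}\bfu\rangle\le 0$, take $\bfw=M_i^{-1}\bfu$ so that $\cG_{j_i}\bfnu_0=\bfu$; then $\bfp^{j_i}=-\bfw=-M_i^{-1}\bfu$ satisfies $\langle\bfp^{j_i},\bfu\rangle\ge 0$ with $\bfu\in-\boundary\cQ_{j_i}\setminus\{\bfzero\}$, and Part~(6d) of Lemma~\ref{codersecond} applies directly---there is no need to unpack Lemma~\ref{projcod}(6). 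This is precisely how the paper obtains its (pdcond2++).

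Second, the interior step for $M_i^{-1}$ is \emph{not} fully symmetric to the $M_i$ case and should not be attempted through a fresh coderivative argument. After Lemma~\ref{lemtech} produces $\bfv\in-\interior\cQ_{j_i}$ with $M_i^{-1}\bfv=-\lambda\bfv\in\cQ_{j_i}$, you get $\langle\bfv,M_i^{-1}\bfv\rangle\le 0$, i.e.\ $\langle M_i^{-1}\bfv,\,M_i(M_i^{-1}\bfv)\rangle\le 0$ with $M_i^{-1}\bfv\in\cQ_{j_i}\setminus\{\bfzero\}$; this contradicts the positivity of $M_i$ on $\cQ_{j_i}$ that you already established. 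The paper closes the argument this way rather than by invoking a new part of Lemma~\ref{codersecond}, and a literal ``symmetric'' run via~(6b) would stall because the resulting $(\bfp,\bfq)$ both land in $\interior\cQ_{j_i}$, which none of (6a)--(6e) covers.

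Finally, your block argument for $T_i$ nonsingular is valid but more elaborate than needed: the paper simply observes that $T_k\bfeta=\bfzero$ with $\bfeta\ne\bfzero$ would violate the reduced Aubin condition by choosing every $p^{j_i}=0$ and $\bfp^{j_i}=\bfzero$ (valid by \eqref{coderzero} and Lemma~\ref{codersecond}(6e)).
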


\begin{proof}

Since \eqref{aubrxnew} holds in this setting, by using \eqref{jirule} we can get that, for any
$\bfnu\in\Re^\ell\setminus\{\bfzero\}$,
\[
\label{ordered}
\begin{array}{ll}
H^\top \Theta H\bfnu
\notin
-
\left\{
\sum\limits_{i=1}^{|J_{4,5}|}
\cG_{j_i}^\top p^{j_i}
+\sum\limits_{i= |J_{4,5}|+1}^{|J_{4,5,6}|}
\cG_{j_i}^\top \bfp^{j_i} \ \Big\vert\
\begin{array}{ll}
p^{j_i}\in \cD^*\cN_{[0,+\infty)}(0,0) (\cG_{j_i}\bfnu),
\\
\bfp^{j_i}\in \cD^*\cN_{\cQ_{j_i}}(\bfzero,\bfzero)(\cG_{j_i}\bfnu)
\end{array}
\right\}.
\end{array}
\]
We first show that for any $k\in\{0,\ldots, |J_{4,5}|\}$ and any $\bfeta \in\Re^{\ell-k}\setminus\{\bfzero\}$,
it holds that
\[
\label{orderedproduction}
\begin{array}{ll}
T_k \bfeta
\notin -
P_k^{\top} \left\{
\sum\limits_{i=k+1}^{|J_{4,5}|}
\cG_{j_i}^\top p^{j_i}
+\sum\limits_{i= |J_{4,5}|+1}^{|J_{4,5,6}|}
\cG_{j_i}^\top \bfp^{j_i}
\Big\vert
\begin{array}{ll}
p^{j_i}\in \cD^*\cN_{[0,+\infty)}(0,0) (\cG_{j_i}P_k \bfeta),
\\[1mm]
\bfp^{j_i}\in \cD^*\cN_{\cQ_{j_i}}(\bfzero,\bfzero)(\cG_{j_i}P_k \bfeta)
\end{array}
\right\}.
\end{array}
\]
Recall that $T_0=H^\top\Theta H$.
Then, by \eqref{ordered} we know that \eqref{orderedproduction} holds for $k=0$.
If $1\le k\le |J_{4,5}|$, one has from \eqref{defp} that
$\cG_{j_i}P_k =\cG_{j_i} R_1\times R_2\times\cdots\times R_k$.
Then one can get from \eqref{rrlation} that
$\cG_{j_i}P_k\bfeta=0$ for any $\bfeta\in\Re^{\ell-k}$ and all $i=1,\ldots, k$.
Thus, by \eqref{coderzero} we know that \eqref{ordered} implies that,
for any $\bfeta \in\Re^{\ell-k}\setminus\{\bfzero\}$,
it holds that
\[
\label{orderedreduced}
\begin{array}{ll}
H^\top \Theta H P_k \bfeta
\notin
\Span\left(\{\cG_{j_i}^\top\}_{i=1,\ldots, k}\right)
\\[2mm]
\qquad-
\left\{
\sum\limits_{i=k+1}^{|J_{4,5}|}
\cG_{j_i}^\top p^{j_i}
+\sum\limits_{i= |J_{4,5}|+1}^{|J_{4,5,6}|}
\cG_{j_i}^\top \bfp^{j_i} \ \Big\vert\
\begin{array}{ll}
p^{j_i}\in \cD^*\cN_{[0,+\infty)}(0,0) (\cG_{j_i}P_k \bfeta),
\\
\bfp^{j_i}\in \cD^*\cN_{\cQ_{j_i}}(\bfzero,\bfzero)(\cG_{j_i}P_k \bfeta)
\end{array}
\right\}.
\end{array}
\]
Note that
$P_k^\top\cG_{j_i}^\top=\bfzero$ for all $i=1,\ldots, k$,
so that
$\{\cG_{j_i}^\top\}_{i=1,\ldots, k}\subseteq \ker P_k^\top$.
Moreover, as $P_k^\top\in \Re^{(l-k)\times l}$ has full row rank by definition, the dimension of $\ker P_k^\top$ is exactly $k$.
Then by Corollary \ref{aubins2} we know that
$\Span(\{\cG_{j_i}^\top\}_{i=1,\ldots, k})= \ker P_k^\top$.
Suppose to the contrary that \eqref{orderedproduction} can not be true.
That is there exist a nonzero vector $\hat\bfeta\in\Re^{\ell-k}$,
scalars $\hat p^{j_i}\in\Re$ for all $i=k+1,\ldots,|J_{4,5}|$,
and vectors $\hat\bfp^{j_i}\in\Re^{1+r_{j_i}}$  for all $i=|J_{4,5}|+1, \ldots, |J_{4,5,6}|$ satisfying
$\hat p^{j_i}\in\cD^*\cN_{[0,+\infty)}(0,0) (\cG_{j_i}P_k \hat\bfeta)$ and
$\hat\bfp^{j_i}\in \cD^*\cN_{\cQ_{j_i}}(\bfzero,\bfzero)(\cG_{j_i}P_k \hat\bfeta)$,
such that
$$
\begin{array}{ll}
T_k \hat\bfeta
= P_k^\top H^\top \Theta H P_k \hat \bfeta
=-P_k^\top \Big(
\sum\limits_{i=k+1}^{|J_{4,5}|}
\cG_{j_i}^\top \hat  p^{j_i}
+\sum\limits_{i= |J_{4,5}|+1}^{|J_{4,5,6}|}
\cG_{j_i}^\top \hat \bfp^{j_i}
\Big).
\end{array}
$$
Then one has
$$
\begin{array}{ll}
H^\top\Theta H P_k \hat\bfeta
+\sum\limits_{i=k+1}^{|J_{4,5}|} \cG_{j_i}^\top \hat p^{j_i}
+\sum\limits_{i= |J_{4,5}|+1}^{|J_{4,5,6}|} \cG_{j_i}^\top \hat \bfp^{j_i}
\in \ker P_k^\top
=\Span\left(\{\cG_{j_i}^\top\}_{i=1,\ldots, k}\right),
\end{array}
$$
which contradicts \eqref{orderedreduced}.
{Particularly, if  $T_k\hat\bfeta=\bfzero$, one can take $\hat p^{j_i}=0$ and $\hat\bfp^{j_i}=\bfzero$ due to
\eqref{coderzero} and Part (6e) of Lemma \ref{codersecond}.}
Therefore, we know that \eqref{orderedproduction} holds and $T_k$ is nonsingular.

Next, we show that for any $k\in\{|J_{4,5}|+1,\ldots, |J_{4,5,6}|\}$ one has
\[
\label{orderedproduction2}
\begin{array}{ll}
T_k \bfeta
\notin -
P_k^{\top} \left\{
\sum\limits_{i= k+1}^{|J_{4,5,6}|}
\cG_{j_i}^\top \bfp^{j_i}
\ \big\vert\
\bfp^{j_i}\in \cD^*\cN_{\cQ_{j_i}}(\bfzero,\bfzero)(\cG_{j_i}P_k \bfeta)
\right\},
\forall\, \bfeta \in\Re^{\ell-\tilde k}
\setminus\{\bfzero\},
\end{array}
\]
where $\tilde k:= k+\sum_{t=|J_{4,5}|+1}^k r_{j_t}$.
If $|J_{4,5}|+1 \le k\le |J_{4,5,6}|$,
from \eqref{defp} one has
$\cG_{j_i}P_k =\cG_{j_i} R_1\times R_2\times\cdots\times R_k$.
According to \eqref{rrlation}  one can get
$\cG_{j_i}P_k\bfeta=\bfzero$ for any $\bfeta\in\Re^{\ell-\tilde k}$ and  any $i=1,\ldots, k$.
Then, by \eqref{coderzero} and Part (6a) of Lemma \ref{codersecond}
(or \eqref{coderzero} if $r_j=0$)
we know that \eqref{ordered} implies that,
for any $\bfeta \in\Re^{\ell-\tilde k}\setminus\{\bfzero\}$,
\[
\label{orderedreduced2}
\begin{array}{ll}
H^\top \Theta H P_k \bfeta
\notin
\Span\left(\{\cG_{j_i}^\top\}_{i=1,\ldots, k}\right)
-
\left\{
\sum\limits_{i= k+1}^{|J_{4,5,6}|}
\cG_{j_i}^\top \bfp^{j_i} \ \Big\vert\
\bfp^{j_i}\in \cD^*\cN_{\cQ_{j_i}}(\bfzero,\bfzero)(\cG_{j_i}P_k \bfeta)
\right\}.
\end{array}
\]
Note that $P_k^\top\cG_{j_i}^\top=\bfzero$ for all $i=1,\ldots, |J_{4,5}|$
and $P_k^\top\cG_{j_i}^\top= O_{(\ell-\tilde k)\times(1+r_{j_i})}$ for all $i=|J_{4,5}|+1,\ldots, k$.
Moreover, since $P_k^\top\in \Re^{(l-\tilde k)\times l}$ has full row rank, one has
$$
\{\cG_{j_i}^\top\}_{i=1,\ldots, |J_{4,5}|}\cup\{\Span\cG_{j_i}^\top\}_{i=|J_{4,5}|+1,\ldots,k}\subseteq \ker P_k^\top.
$$
Then by Corollary \ref{aubins2} we know that
$\Span\left(\{\cG_{j_i}^\top\}_{i=1,\ldots, k}\right)= \ker P_k^\top$.
Suppose to the contrary that \eqref{orderedproduction2} is not true,
i.e.,
there exists a nonzero vector $\tilde \bfeta\in\Re^{\ell-\tilde k}$  and vectors $\tilde \bfp^{j_i}\in\Re^{1+r_{j_i}}$ ($i=k+1,\ldots,|J_{4,5,6}|$)
such that
$$
\begin{array}{ll}
T_k \tilde\bfeta
= P_k^\top H^\top \Theta H P_k \tilde \bfeta
=-P_k^\top \left(
\sum\limits_{i=k+1}^{|J_{4,5,6}|}
\cG_{j_i}^\top \tilde \bfp^{j_i}\right)
\ \mbox{and}\
\tilde\bfp^{j_i}\in \cD^*\cN_{\cQ_{j_i}}(\bfzero,\bfzero)(\cG_{j_i}P_k \tilde\bfeta).
\end{array}
$$
Then one can get
$$
\begin{array}{ll}
H^\top\Theta H P_k \tilde\bfeta
+\sum\limits_{i= k+1}^{|J_{4,5,6}|} \cG_{j_i}^\top \tilde \bfp^{j_i}
\in \ker (P_k^\top)
=\Span(\{\cG_{j_i}^\top\}_{i=1,\ldots, k}),
\end{array}
$$
which contradicts \eqref{orderedreduced2}.
Therefore,
\eqref{orderedproduction2} holds, and it is easy to see that $T_k$ is nonsingular.

In the following, we separate the proof of \eqref{positivequad} into three cases:

\smallskip
{\bf(1)}
Suppose that there exists an index $\iota\in\{1,\ldots, |J_4|\}$ such that
$$
[\cG_{j_{\iota}}P_{\iota-1}]
T_{\iota-1}^{-1}  [\cG_{j_{\iota}}P_{\iota-1}]^\top
\le 0.
$$
Let $k:=\iota-1$ in \eqref{orderedproduction}.
According to \eqref{coderzero} and Part (6e) of Lemma \ref{codersecond},
one can fix all $p^{j_i}=0$ and all $\bfp^{j_i}=\bfzero$ in \eqref{orderedproduction} for all $i\neq \iota$.
By doing this one can obtain that
\[
\label{reduced4}
T_{\iota-1}
\bfeta
\neq
-P_{\iota-1}^{\top}
\cG_{j_{\iota}}^\top p,
\quad
\forall \, p \in \cD^*\cN_{[0,+\infty)}(0,0) (\cG_{j_{\iota}}P_{\iota-1}\bfeta),
\quad
\forall\, \bfeta \in\Re^{\ell-\iota +1}
\setminus\{\bfzero\}.
\]
From \eqref{defginew} one has $\cG_{j_{\iota}}=((\dot\mu^*)^{j_{\iota}};-(\bar\bfmu^*)^{j_{\iota}})^\top\cJ g^{j_{\iota}}(\bfx^*)H$.
Then, by defining the vector
$\bfeta:=T_{\iota -1}^{-1}
[\cG_{j_{\iota}} P_{\iota-1}]^\top$ one can get
\[
\label{qlezero}
\langle
(\dot\mu^*)^{j_{\iota}};-(\bar\bfmu^*)^{j_{\iota}}),
\cJ g^{j_{\iota}}(\bfx^*)H P_{{\iota}-1}\bfeta\rangle
=
\cG_{j_{\iota}}P_{{\iota}-1}\bfeta
=
[\cG_{j_{\iota}}P_{{\iota}-1}] T_{{\iota}-1}^{-1}  [\cG_{j_{\iota}}P_{{\iota}-1}]^\top
\le 0.
\]
It can be easily observed from \eqref{coderzero} that
$p:=
-1
\in \cD^*\cN_{[0,+\infty)}(0,0)(\cG_{j_{\iota}}P_{{\iota}-1}\bfeta)$.
Meanwhile, one has
$T_{{\iota}-1}\bfeta=[\cG_{j_{\iota}}P_{{\iota}-1}]^\top
=-P_{{\iota}-1}^\top  \cG_{j_{\iota}}^\top p$.
However, this contradicts \eqref{reduced4}, so that
\eqref{qlezero} fails. Therefore, \eqref{positivequad} holds for all $i\in\{1,\ldots, |J_4|\}$.

\medskip
{\bf(2)}
Suppose that there exists an index $\iota\in\{|J_4|+1,\ldots, |J_{4,5}|\}$ such that
$$
[\cG_{j_{\iota}}P_{\iota-1}]
T_{\iota-1}^{-1}  [\cG_{j_{\iota}}P_{\iota-1}]^\top
\le 0.
$$
Let $k:=\iota-1$ in \eqref{orderedproduction}.
According to \eqref{coderzero} and Part (6e) of Lemma \ref{codersecond}, one can fix
all $p^{j_i}=0$ and all $\bfp^{j_i}=\bfzero$ in \eqref{orderedproduction} for all $i\neq \iota$.
By doing this one can obtain that
\[
\label{reduced5}
T_{\iota-1}
\bfeta
\neq
-P_{\iota-1}^{\top}
\cG_{j_{\iota}}^\top p,
\quad
\forall\, p \in \cD^*\cN_{[0,+\infty)}(0,0) (\cG_{j_{\iota}}P_{\iota-1}\bfeta),
\quad
\forall\, \bfeta \in\Re^{\ell-\iota +1}
\setminus\{\bfzero\}.
\]
From \eqref{defginew} one has $\cG_{j_{\iota}}=(\dot g^{j_{\iota}}(\bfx^*);-\bar g^{j_{\iota}}(\bfx^*))^\top
\cJ g^{j_{\iota}}(\bfx^*)H$.
Then, by letting $\bfeta:=T_{\iota -1}^{-1}
[\cG_{j_{\iota}} P_{\iota-1}]^\top$ one can get
\[
\label{qlezero5}
(\dot g^{j_{\iota}}(\bfx^*);-\bar g^{j_{\iota}}(\bfx^*))^\top
\cJ g^{j_{\iota}}(\bfx^*)H P_{{\iota}-1}\bfeta
=
\cG_{j_{\iota}}P_{{\iota}-1}\bfeta
=
[\cG_{j_{\iota}}P_{{\iota}-1}] T_{{\iota}-1}^{-1}  [\cG_{j_{\iota}}P_{{\iota}-1}]^\top
\le 0.
\]
Thus, it can be easily observed from \eqref{coderzero} that
$p:=
-1
\in \cD^*\cN_{[0,+\infty)}(0,0)(\cG_{j_{\iota}}P_{{\iota}-1}\bfeta)$.
Moreover, by the definition of $\bfeta$ we have
$T_{{\iota}-1}\bfeta=[\cG_{j_{\iota}}P_{{\iota}-1}]^\top=P_{{\iota}-1}^\top \cG_{j_{\iota}}^\top
=-P_{{\iota}-1}^\top \cG_{j_{\iota}}^\top p$.
However, this contradicts \eqref{reduced5}, so that
\eqref{qlezero5} fails. Therefore, \eqref{positivequad} holds for all $i\in\{|J_4|+1,\ldots, |J_{4,5}|\}$.

\smallskip
{\bf(3)}
Suppose that there exists an index $\iota\in\{|J_{4,5}|+1,\ldots, |J_{4,5,6}|\}$ such that
\[
\label{npsdass}
[\cG_{j_{\iota}}P_{{\iota}-1}] T_{{\iota}-1}^{-1}  [\cG_{j_{\iota}}P_{{\iota}-1}]^\top \not\succ O.
\]
Let $k:=\iota-1$ in \eqref{orderedproduction2}.
According to Part (6e) of Lemma \ref{codersecond}
(or \eqref{coderzero} if $r_{j_{\iota}=0}$),
one can fix $\bfp^{j_i}=\bfzero$ in \eqref{orderedproduction2} for all $i\neq \iota$.
By doing this and using the fact that $\tilde k= k+\sum_{t=|J_{4,5}|+1}^k r_{j_t}$
one can obtain that
\[
\label{reduced6}
T_{\iota-1} \bfeta
\neq
-P_{\iota-1}^\top
\cG_{j_{\iota}}^\top
\bfp,
\quad \forall \,
\bfp \in \cD^*\cN_{\cQ_{j_{\iota}}}(\bfzero,\bfzero)(\cG_{j_{\iota}} P_{\iota-1}\bfeta ),
\quad
\forall\,  \bfeta \in \Re^{\ell-\tilde k} \setminus\{\bfzero\}.
\]
Note that $\ker P_{\iota-1}^\top=\Span
\big(\{\cG_{j_{i}}^\top\}_{i=1,\ldots, \iota-1}\big)$.
Moreover, recall from \eqref{rrlation} and \eqref{defp} that each $P_i$ has full column rank.
Suppose that $[\cG_{j_{\iota}}P_{{\iota}-1}] T_{{\iota}-1}^{-1}  [\cG_{j_{\iota}}P_{{\iota}-1}]^\top$ is singular, i.e., there exists a nonzero vector $\hat\bftheta\in\Re^{1+r_{j_{\iota}}}$ such that
$[\cG_{j_{\iota}}P_{{\iota}-1}]
T_{{\iota}-1}^{-1}
[\cG_{j_{\iota}}P_{{\iota}-1}]^\top \hat\bftheta=\bfzero.$
Then, on the one hand, if $\hat\bfeta:=T_{{\iota}-1}^{-1} [\cG_{j_{\iota}}P_{{\iota}-1}]^\top \hat\bftheta=\bfzero$,
one has
$P^\top_{{\iota}-1}\cG_{j_{\iota}}^\top\hat\bftheta=
[\cG_{j_{\iota}}P_{{\iota}-1}]^\top \hat\bftheta=\bfzero$,
so that
$$\cG_{j_{\iota}}^\top\hat\bftheta\in \Span\big(\{\cG_{j_{i}}^\top\}_{i=1,\ldots, \iota-1}\big),
$$
but this contradicts \eqref{fullrange} of Corollary \ref{aubins2}.
On the other hand, if  $\hat\bfeta\neq\bfzero$,
one has from the fact $\cG_{j_{\iota}} P_{\iota-1}\hat\bfeta =\bfzero$ and Part (6a) of Lemma \ref{codersecond}
(or \eqref{coderzero} if $r_j=0$) that
$-\hat\bftheta \in
\cD^*\cN_{\cQ_{j_{\iota}}}(\bfzero,\bfzero)
(\cG_{j_{\iota}} P_{\iota-1}\hat \bfeta)$.
Moreover, one can see that  $T_{\iota-1} \hat \bfeta
=[\cG_{j_{\iota}}P_{{\iota}-1}]^\top \hat\bftheta
=-P_{\iota-1}^\top
\cG_{j_{\iota}}^\top (-\hat\bftheta)$, which contradicts \eqref{reduced6}.
Therefore, $[\cG_{j_{\iota}}P_{{\iota}-1}] T_{{\iota}-1}^{-1}  [\cG_{j_{\iota}}P_{{\iota}-1}]^\top$
is nonsingular.
Then, one can define the matrix $M:=([\cG_{j_{\iota}}P_{{\iota}-1}]T_{{\iota}-1}^{-1} [\cG_{j_{\iota}}P_{{\iota}-1}]^\top)^{-1}$.

For a vector $\bfq\in\Re^{1+r_{j_{\iota}}}$, one can define
$\bfp:=- M\bfq$ and $\bfeta:=-T_{{\iota}-1}^{-1} [\cG_{j_{\iota}}P_{{\iota}-1}]^\top \bfp$.
Then one has $\cG_{j_{\iota}}P_{{\iota}-1}\bfeta=\bfq$ and, consequently,
$T_{{\iota}-1} \bfeta
=
-[\cG_{j_{\iota}}P_{{\iota}-1}]^\top \bfp
=
-P_{{\iota}-1}^\top \cG_{j_{\iota}}^\top  \bfp$.
Moreover, since \eqref{reduced6} holds, one obtains that
\[
\label{pqfails}
-M\bfq \notin \cD^*\cN_{\cQ_{j_{\iota}}}(\bfzero,\bfzero)(\bfq),\quad\forall \bfq\in\Re^{1+r_{{j}_\iota}}.
\]

Next, we show that \eqref{npsdass} can not be true.
We first consider the case that $r_{j_{\iota}}=0$.
If \eqref{npsdass} holds in this case,
by setting $q=[\cG_{j_{\iota}}P_{{\iota}-1}] T_{{\iota}-1}^{-1}  [\cG_{j_{\iota}}P_{{\iota}-1}]^\top< 0$,
one has $-M^{-1}q=-1 < 0$.
Then, by \eqref{coderzero} we know that  $-Mq\in \cD^*\cN_{\cQ_{j_{\iota}}}(0,0)(q)$, which contradicts
\eqref{pqfails}.
Consequently, one has  $[\cG_{j_{\iota}}P_{{\iota}-1}] T_{{\iota}-1}^{-1}  [\cG_{j_{\iota}}P_{{\iota}-1}]^\top>0$ in this case.
In the following, we deal with the case that $r_{j_{\iota}}\ge 1$.
Firstly, we shall prove  that
\[
\label{pdcond1}
\langle \bftheta,  M^{-1}\bftheta\rangle> 0,
\quad\forall\, \bftheta\in
\boundary\cQ_{{j_{\iota}}}\setminus\{\bfzero\}.
\]
Suppose on the contrary that there exists  a vector $\hat\bftheta\in\boundary\cQ_{{j_{\iota}}}\setminus\{\bfzero\}$ such that
$
\langle \hat\bftheta, M^{-1}\hat\bftheta\rangle\le 0.
$
By taking $\hat\bfq:=M^{-1}\bf\hat\bftheta$, one has $\langle -\hat \bftheta,\hat \bfq\rangle \ge 0$.
Then, by Part (6c) of Lemma \ref{codersecond} we know that
$-M\hat \bfq=-\hat\bftheta \in  \cD^*\cN_{\cQ_{j_{\iota}}}(\bfzero,\bfzero)(\hat\bfq)$, which contradicts \eqref{pqfails}.
Therefore, \eqref{pdcond1} holds.
Secondly, we shall show that the following holds:
\[
\label{pdcond2}
\langle
\bftheta,  M^{-1}\bftheta\rangle>0,
\quad\forall\, \bftheta\in
\interior\cQ_{j_{\iota}} .
\]
For the purpose of contradiction, assume that \eqref{pdcond2} does not hold.
Since the matrix $ M^{-1}$ is nonsingular and symmetric,
we know from \eqref{pdcond1} and Lemma \ref{lemtech} that
there exists a nonzero vector
$\tilde \bfp\in -\interior\cQ_{j_{\iota}}$ such that $\tilde \bfq:=- M^{-1}\tilde \bfp\in -\cQ_{j_{\iota}}$.
From Part (6b) of Lemma \ref{codersecond}
one can see that
$-M\tilde  \bfq= \tilde \bfp \in  \cD^*\cN_{\cQ_{j_{\iota}}}(\bfzero,\bfzero)(\tilde\bfq)$, which contradicts \eqref{pqfails}.
Therefore, \eqref{pdcond2} holds.
Thirdly, we shall show that
\[
\label{pdcond2+}
\langle
\bfxi, M \bfxi\rangle>0,
\quad\forall\,  \bfxi\in \cQ_{j_{\iota}}^\circ\setminus\{\bfzero\}.
\]
Since $\cQ_{j_{\iota}}$ is a self-dual cone, one has $\cQ_{j_{\iota}}^\circ=-\cQ_{j_{\iota}}$.
On the one hand, if there exists a nonzero vector $\hat\bfxi\in -\boundary\cQ_{j_{\iota}}$ such that
$\langle\hat \bfxi, M \hat \bfxi\rangle\le 0$, then we know from Part (6d) of Lemma \ref{codersecond} that
$-M \bfxi
\in \cD^*\cN_{\cQ_{j_{\iota}}}(\bfzero,\bfzero)(\bfxi)$,
which contradicts \eqref{pqfails}. Therefore, it holds that
\[
\label{pdcond2++}
\langle
\bfxi, M \bfxi\rangle>0,
\quad\forall\,  \bfxi\in \boundary\cQ_{j_{\iota}}^\circ\setminus\{\bfzero\}.
\]
On the other hand,  if there exists a vector $\tilde\bfxi\in -\interior \cQ_{j_{\iota}}$ such that
$\langle \tilde\bfxi, M \tilde\bfxi\rangle\le 0$, we obtain
from \eqref{pdcond2++} and Lemma \ref{lemtech} that
 there exists a nonzero vector $\bfv\in -\interior \cQ_{j_{\iota}}$
such that $M \bfv\in \cQ_{j_{\iota}}$.
Then, one has $\langle \bfv, M  \bfv\rangle \le 0$,
i.e.,  $\langle (M\bfv), M^{-1} (M \bfv)\rangle \le 0$.
This contradicts either \eqref{pdcond1} or \eqref{pdcond2} since $\bfzero\neq M \bfv\in \cQ_{j_{\iota}}$. Thus  \eqref{pdcond2+} holds.
Combining \eqref{pdcond1}, \eqref{pdcond2}, and \eqref{pdcond2+} we know from Lemma \ref{lemma:pd} that
\eqref{npsdass} does not hold.
This completes the proof.
\end{proof}

Based on the previous discussions  in this section,
we can establish the following result on the strong second-order sufficient condition.

\begin{proposition}
\label{propmain}
Let $\bfx^*$ be a locally optimal solution to the nonlinear SOCP \eqref{nlsocp} with
$\bfy^*=(\bflambda^*,\bfmu^*)$ being an associated   multiplier.
 {Let the index set $\{1,\ldots,J\}$ be classified as in \eqref{jdecompose}.}
Assume that the solution mapping ${\rS}_{\rm KKT}$ given in \eqref{skkt} has the Aubin property at $(\bfzero,\bfzero)$ for $(\bfx^*,\bfy^*)$.
Then the strong second-order sufficient condition \eqref{ssosc} holds for \eqref{nlsocp} at $\bfx^*$.
\end{proposition}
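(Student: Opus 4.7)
The plan is to establish the strong second-order sufficient condition via Lemma \ref{lemsosc}, i.e., to prove $H^\top \Theta H \succ O$, through a downward induction on the nested matrix sequence $\{T_i\}_{i=0}^{|J_{4,5,6}|}$ constructed in \eqref{defti}.

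First, I would observe that the Aubin property of $\rS_{\rm KKT}$ implies constraint nondegeneracy at $\bfx^*$, so that Assumption \ref{assblanket} holds with $\bfy^*$ as the unique multiplier; moreover it transfers to the Aubin property of $\rS_{\rm GE}$ at $(\bfzero,\bfx^*)$ via $\rS_{\rm GE}(\cdot) = \pi_{\bfx}\rS_{\rm KKT}(\cdot,\bfzero)$ combined with multiplier uniqueness. Hence Proposition \ref{propreduce} applies and yields, for every $i \in \{1,\ldots,|J_{4,5,6}|\}$, both nonsingularity of $T_i$ and positive definiteness of $[\cG_{j_i}P_{i-1}]T_{i-1}^{-1}[\cG_{j_i}P_{i-1}]^\top$.

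For the base case $T_{|J_{4,5,6}|} \succ O$, I would show that every $\bfd := HP_{|J_{4,5,6}|}\bfeta$ lies in the critical cone $\cC(\bfx^*)$. Since $\range H = \aff\cC(\bfx^*)$, already $\bfd \in \aff\cC(\bfx^*)$; the annihilation identities $\cG_{j_i}P_{|J_{4,5,6}|}\bfeta = \bfzero$ for all $i$ then sharpen the affine-hull constraints: for $j \in J_4$, $\cJ g^j(\bfx^*)\bfd$ is simultaneously a scalar multiple of $((\dot\mu^*)^j;-(\bar\bfmu^*)^j)$ and orthogonal to it, so it vanishes; for $j \in J_5$, orthogonality to $(\dot g^j(\bfx^*);-\bar g^j(\bfx^*))$ places $\cJ g^j(\bfx^*)\bfd$ in $\lin\cT_{\cQ_j}(g^j(\bfx^*))$, which is contained in $\cT_{\cQ_j}(g^j(\bfx^*))$; and for $j \in J_6$, $\cJ g^j(\bfx^*)\bfd = \bfzero \in \cQ_j$. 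Comparing with \eqref{criticalcone2} shows $\bfd \in \cC(\bfx^*)$, so the standard second-order necessary condition for local optimality in nonlinear SOCP delivers $\langle \bfd, \Theta \bfd \rangle \ge 0$, i.e., $T_{|J_{4,5,6}|} \succeq O$. Combined with its nonsingularity, this yields $T_{|J_{4,5,6}|} \succ O$.

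For the inductive step from $T_i \succ O$ down to $T_{i-1} \succ O$, I would exploit the Schur complement. Fix an orthonormal decomposition of the ambient space into $\ker(\cG_{j_i}P_{i-1})$ and its orthogonal complement; in this basis $T_{i-1}$ takes a $2\times 2$ block form with diagonal blocks $A,C$ and off-diagonal block $B$, while $\cG_{j_i}P_{i-1}$ has the form $(O,V)$ with $V$ square and invertible (the hypothesis $[\cG_{j_i}P_{i-1}]T_{i-1}^{-1}[\cG_{j_i}P_{i-1}]^\top \succ O$ forces $\cG_{j_i}P_{i-1}$ to have full row rank). The hypothesis $T_i = R_i^\top T_{i-1} R_i \succ O$ is equivalent to $A \succ O$, while the block-inverse formula identifies $[\cG_{j_i}P_{i-1}]T_{i-1}^{-1}[\cG_{j_i}P_{i-1}]^\top$ with $V(C - B^\top A^{-1}B)^{-1}V^\top$, so positive definiteness of this quantity forces the Schur complement $C - B^\top A^{-1}B \succ O$. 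The classical Schur criterion then yields $T_{i-1} \succ O$. Iterating down to $i=0$ produces $T_0 = H^\top \Theta H \succ O$, which is exactly \eqref{ssosc} by Lemma \ref{lemsosc}. The main obstacle is the base case: one must verify that the successive reductions drive $\bfd$ all the way into $\cC(\bfx^*)$ rather than merely $\aff\cC(\bfx^*)$, so that the classical second-order necessary condition can be invoked; the Schur-complement induction, by contrast, is routine linear algebra once the orthogonal decomposition is set up.
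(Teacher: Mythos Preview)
Your proposal is correct and follows essentially the same downward-induction strategy on the matrices $T_i$ as the paper, with Proposition~\ref{propreduce} supplying both the nonsingularity of each $T_i$ and the positivity of $[\cG_{j_i}P_{i-1}]T_{i-1}^{-1}[\cG_{j_i}P_{i-1}]^\top$ for the recursive step. The only cosmetic differences are that the paper obtains the base case $T_{|J_{4,5,6}|}\succ O$ directly from the second-order \emph{sufficient} condition (itself deduced from the Aubin property via \cite[Corollary~25]{ding2017}) rather than from the necessary condition plus nonsingularity, and that the paper phrases the inductive step through Lemma~\ref{lemma:pd}, which in the present subspace setting ($K=\range R_i$, $K^\circ=\range[\cG_{j_i}P_{i-1}]^\top$) is precisely your Schur-complement argument.
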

\begin{proof}

Since the solution mapping ${\rS}_{\rm KKT}$ in \eqref{skkt} has the Aubin property at $(\bfzero,\bfzero)$ for $(\bfx^*,\bfy^*)$, we know that the mapping $\rS_{\rm GE}$ in \eqref{sge} also has the Aubin property at the origin for $\bfx^*$.
Then, by \cite[Corollary 25]{ding2017} the constraint nondegeneracy condition \eqref{cnd} holds at $\bfx^*$ and the second-order sufficient condition holds \eqref{sosc} at $\bfx^*$.
Therefore, Assumption \ref{assblanket} holds.

Let $\Theta\in\Re^{n\times n}$  be the matrix defined in \eqref{ssocq},
and  $H\in\Re^{n\times \ell}$ be a matrix with full column rank such that $\range H=\aff(\cC(\bfx^*))$.
From Corollary \ref{aubinss} we know that \eqref{aubrxnew} holds with $\cG_j$ ($j\in J_{4,5,6}$) being defined by \eqref{defgj}.
Furthermore, one knows that \eqref{fullrange} holds
by Corollary \ref{aubins2}.
Then, based on \eqref{jirule} and \eqref{defginew},
one has \eqref{positivequad} holds for the matrices $R_i$, $P_i$ and $T_i$ defined in \eqref{defr}, \eqref{defp} and \eqref{defti} by Proposition \ref{propreduce}.

From Lemma \ref{lemsosc} we know that $\langle \bfnu, H^\top \Theta H \bfnu \rangle>0, \, \forall  \bfnu\in\cV\setminus\{\bfzero\}$,
where $\cV$ is defined in \eqref{vg}.
Therefore, one has
\[
\label{soscfinal}
\langle \bfnu, H^\top \Theta H \bfnu \rangle>0, \  \forall \, \bfzero\neq\bfnu
\in
\lin\cV
=\left\{ \bfnu\in\Re^\ell \, \middle| \,
\begin{array}{ll}
\cG_{j_i} \bfnu= 0,
&
1\le i\le |J_{4,5}|, \\
\cG_{j_i} \bfnu =\bfzero, &  |J_{4,5}|\le i\le |J_{4,5,6}|
\end{array}
\right\}.
\]
On the one hand, if $\bfnu\in\lin\cV$,
one has $\cG_{j_1}\bfnu=0$.
From \eqref{rrlation} one has
$\range\, R_1=\ker\, (\cG_{j_1} P_{0})=\ker\, \cG_{j_1}$,
so that $\bfnu\in \range R_1=\range P_1$.
Furthermore, if one has  $\bfnu\in \range P_{i-1}$ for a certain $i\in\{2,\ldots, |J_{4,5,6}|\}$,
one can get from $\eqref{rrlation}$ that $\range\, R_i=\ker\, \cG_{j_i} P_{i-1}$.
Then from the definition of $\lin\cV$ in \eqref{soscfinal} one can see that $\bfnu\in\range P_{i-1} R_{i}=\range P_i$.
Consequently, one can get by induction that
\[
\label{vinrangepmax}
\bfnu\in \range P_{|J_{4,5,6}|}.
\]
On the other hand, if \eqref{vinrangepmax} holds, then
it is easy to see from \eqref{rrlation} that $\bfnu\in \lin\cV$ holds.
Therefore, $\bfnu\in \lin\cV$ if and only if \eqref{vinrangepmax} holds.
Consequently, by \eqref{soscfinal} one has
$\langle \bfnu, H^\top \Theta H \bfnu \rangle>0$ for all $\bfzero \neq \bfnu\in \range P_{|J_{4,5,6}|}$.
This is equivalent, by \eqref{defp} and \eqref{defti}, to the condition that
\[
\label{timaxge}
T_{i}=
P_{i}^\top H^\top \Theta H P_{i}
=R_{i}^\top T_{i-1} R_{i}
\succ O
\quad\mbox{with}\quad
i=|J_{4,5,6}|.
\]
Recall that each $R_i$ defined in \eqref{defr} has full column rank and each
$P_i$ in \eqref{defp} also has full column rank.
From Corollary \ref{aubins2} we know that each $\cG_{j_i}$ is surjective.
Then by \eqref{rrlation} we know that
\[
\label{rrlationx}
(\range\, R_i)^\circ=\range [\cG_{j_i} P_{i-1}]^\top,
\quad\forall\, i=1,\ldots, |J_{4,5,6}|.
\]
Recall from \eqref{positivequad} of Proposition \ref{propreduce} that
\[
\label{positivequadx}
[\cG_{{j_{i}}} P_{i-1}] T_{i-1}^{-1}
[\cG_{j_{i}}P_{i-1}]^\top \succ O
\quad\mbox{with}\quad
i=|J_{4,5,6}|.
\]
Based on \eqref{timaxge}, \eqref{rrlationx} and \eqref{positivequadx}, one can apply Lemma \ref{lemma:pd} to get
$T_{|J_{4,5,6}|-1}\succ O$.
Furthermore, for any index $i\in\{2,\ldots, |J_{4,5,6}|-1\}$, if one has
\[
\label{succplus}
T_{i}=R_i^\top T_{i-1} R_i\succ O,
\]
one can get
from \eqref{positivequad} of Proposition \ref{propreduce}
that
$ [\cG_{j_i}P_{i-1}] T_{i-1}^{-1}  [\cG_{j_i}P_{i-1}]^\top \succ O$, which, together with \eqref{rrlationx} and \eqref{succplus}, implies $T_{i-1}\succ O$
by Lemma \ref{lemma:pd}.
Therefore, one can get $T_{0}=H^\top \Theta H\succ O$ by induction.
Note that $\range H=\aff(\cC(\bfx^*))$
and $\Theta\in\Re^{n\times n}$ is the matrix defined in \eqref{ssocq}. Then from Lemma \ref{lemsosc} we know that the strong second-order sufficient condition \eqref{ssosc} holds.
\end{proof}

\subsection{The equivalence theorem}
The main result of this paper is given in the following theorem.
\begin{theorem}
\label{thm:main}
Let $\bfx^*$ be a locally optimal solution to the nonlinear SOCP \eqref{nlsocp} and
$\bfy^*=(\bflambda^*,\bfmu^*)$ be a corresponding  multiplier.
The following two statements are equivalent:

\begin{itemize}
\item[\rm \bf (1)]
the solution mapping ${\rS}_{\rm KKT}$ in \eqref{skkt} has the Aubin property at $(\bfzero,\bfzero)$ for $(\bfx^*,\bfy^*)$;

\item[\rm \bf (2)]
the solution $(\bfx^*,\bfy^*)$ is   strongly regular   to the KKT system \eqref{ge}.
\end{itemize}
\end{theorem}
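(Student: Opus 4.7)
The equivalence theorem follows almost immediately from the heavy lifting already carried out in the preceding sections, so my plan is to assemble the pieces rather than develop any new machinery. The direction $(2)\Rightarrow(1)$ is the easy one: a single-valued Lipschitz continuous localization of $\rS_{\KKT}$ around $((\bfzero,\bfzero),(\bfx^*,\bfy^*))$, which by Robinson's \cite[Corollary 2.2]{Robinson1980} is equivalent to strong regularity of the generalized equation \eqref{ge}, trivially entails the inclusion defining the Aubin property (take $\cU=\cU'$, $\cV=\cV'$, $\kappa=\kappa'$). I would simply state this one-line implication.

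For the substantive direction $(1)\Rightarrow(2)$, the plan is to reduce to the classical characterization of strong regularity via the strong second-order sufficient condition together with constraint nondegeneracy. First I would invoke Proposition \ref{propmain}, which under hypothesis $(1)$ yields that the strong second-order sufficient condition \eqref{ssosc} holds at $\bfx^*$ with multiplier $(\bflambda^*,\bfmu^*)$. Second, I would note that the Aubin property of $\rS_{\KKT}$ at $((\bfzero,\bfzero),(\bfx^*,\bfy^*))$ transfers to the Aubin property of $\rS_{\rm GE}$ at $(\bfzero,\bfx^*)$ (this was already used inside the proof of Proposition \ref{propmain}), and then apply \cite[Corollary 25]{ding2017} to conclude that the constraint nondegeneracy condition \eqref{cnd} holds at $\bfx^*$. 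Finally, I would invoke \cite[Theorem 30]{bonnans2005}, which asserts that for a $C^2$-cone reducible problem such as the nonlinear SOCP \eqref{nlsocp}, the conjunction of the strong second-order sufficient condition and constraint nondegeneracy at a stationary point is equivalent to strong regularity of the associated KKT system, i.e.\ precisely condition $(2)$.

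The only conceptual obstacle has already been overcome in Proposition \ref{propmain}, namely deriving \eqref{ssosc} from the Mordukhovich criterion without any strict complementarity assumption; the proof of the theorem itself is just a bookkeeping step. I would therefore keep the write-up short, essentially a three-line chain: $(1)\Rightarrow$ (Proposition \ref{propmain} and \cite[Corollary 25]{ding2017}) $\Rightarrow$ (\eqref{ssosc} and \eqref{cnd}) $\Rightarrow$ (by \cite[Theorem 30]{bonnans2005}) $(2)$, with the reverse implication being immediate from the definitions.
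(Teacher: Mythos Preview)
Your proposal is correct and follows essentially the same approach as the paper's proof: the trivial implication $(2)\Rightarrow(1)$, and for $(1)\Rightarrow(2)$ the combination of \cite[Corollary 25]{ding2017} (constraint nondegeneracy), Proposition~\ref{propmain} (strong second-order sufficient condition), and \cite[Theorem 30]{bonnans2005} (strong regularity). The only cosmetic difference is the order in which you cite Proposition~\ref{propmain} and \cite[Corollary 25]{ding2017}; the paper invokes the latter first, but since its consequence is already used inside the proof of Proposition~\ref{propmain}, this is immaterial.
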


\begin{proof}
Note that ({\bf 1}) is an immediate consequence of ({\bf 2}).
On the other hand, if ({\bf 1}) holds,
we know from \cite[Corollary 25]{ding2017} that
$\bfx^*$ is nondegenerate and the second-order sufficient condition \eqref{sosc} holds at $\bfx^*$.
Consequently, it follows from  Proposition \ref{propmain} that the strong second-order sufficient condition \eqref{ssosc} holds.  Thus, by using  \cite[Theorem 30]{bonnans2005} we know that ({\bf 2}) holds.
This completes the proof.
\end{proof}

Theorem \ref{thm:main}  establishes the equivalence between the Aubin property and the strong regularity for the nonlinear SOCP \eqref{nlsocp} at local optimal solutions without requiring  {the} strict complementarity.
Moreover, Theorem \ref{thm:main} implies its counterpart for conventional nonlinear programming $($\cite[Theorems 1, 4 $\&$ 5]{don1996}$)$ as a special case as one can take $r_j=0$ for all $j=1,\dots, J$.

According to Theorem \ref{thm:main}, ${\rS}_{\rm KKT}$ in \eqref{skkt} having the Aubin property at $(\bfzero,\bfzero)$ for $(\bfx^*,\bfy^*)$ is also equivalent to many other conditions, such as
(a) $\bfx^*$ is nondegenerate and the uniform second order growth condition (\cite[Definition 5.16]{B&S2000}) holds at $\bfx^*$ (by \cite[Theorem 30]{bonnans2005}); (b)
$\bfx^*$ is a nondegenerate and fully
stable (\cite[Definition 4.1]{fullstab}) locally optimal solution (by \cite[Theorem 4.8]{fullstab}); and
(c) Clarke's generalized Jacobian of a KKT system is nonsingular (by \cite[Theorem 3.1]{w&z2009}).  One may refer to the relevant references for details.

\section{Conclusions}
\label{sec:conclu}
In this paper, we established the equivalence between the Aubin property of the perturbed KKT system and the strong regularity of the KKT system for nonlinear SOCP problems at locally optimal solutions. 
Our results extend beyond prior work by removing the restrictive condition $|J_4 \cup J_5 \cup J_6| \le 1$ required in \cite{outrata2011} and \cite{opazo2017}.
We achieved this by introducing a novel reduction approach to derive the strong second-order sufficient condition from the Aubin property of the perturbed KKT system, in which the lemma of alternative choices on cones we developed here plays an essential role.
Our findings prompt further investigations into whether the approach introduced here could lead to similar equivalences for a broader range of (conic) optimization problems.
Recently,  for nonlinear semidefinite programming (SDP), \cite{ccsz2024} achieved the same equivalence after the announcement of this work, but their approach leverages properties specific to nonlinear SDP. 
While SOCPs can be reformulated as SDPs, simultaneous satisfaction of constraint nondegeneracy can fail \cite{zhao,B&S2000}, implying that the results in \cite{ccsz2024} cannot   carry over to SOCPs\footnote{
To see this, one may consider the nonlinear SOCP 
of minimizing 
$f(x_0,x_1):= \frac{1}{2}(x_0^2+2x_0+x^2_1)$ subject to $(x_0, x_1)\in \{(x_0,x_1) \mid x_0\ge |x_1| \}$, together with its equivalent SDP reformulation of minimizing $f(x_0,x_1)$ subject to $
\begin{pmatrix}
x_0&x_1\\x_1&x_0
\end{pmatrix}\succeq 0$,
as an illustrative example.}.
The equivalence between the Aubin property and the strong regularity for general optimization problems, particularly those with non-polyhedral $C^2$-cone reducible constraints, remains an open question.  
We leave this as one of our future research topics.

\section*{Acknowledgments}
The authors would like to thank the two anonymous referees and the associate editor for their many valuable comments and constructive
suggestions including the references mentioned in their reports, 
which have substantially helped improve the quality and presentation of this paper. 

\small
\bibliographystyle{plain}

\end{document}